\newtheorem{lemma}{Lemma}[section]
\newtheorem{theorem}[lemma]{Theorem}
\newtheorem{remark}[lemma]{Remark}
\newtheorem{coro}[lemma]{Corollary}
\newtheorem{definition}[lemma]{Definition}
\newtheorem{example}[lemma]{Example}
\title[Remotely Almost Periodic Solutions \ldots]{Remotely Almost Periodic Solutions of Scalar Differential
Equations}
\author{David Cheban}
\address[D. Cheban]{Moldova State University\\
Vladimir Andrunachievici Institue\\ of Mathematics and Informatics\\
Laboratory of Differential Equations\\
A. Mateevich Street 60\\
MD--2009 Chi\c{s}in\u{a}u, Moldova} \email[D.
Cheban]{david.ceban@usm.md, davidcheban@yahoo.com}
\date{\today}
\subjclass{37B05, 37B55, 34C27, 34D05 } \keywords{remotely almost
periodic solution; nonautonomous dynamical systems; cocycles;
scalar differential/difference equations}
\begin{document}

\begin{abstract}
The aim of this paper is to study the problem of existence of
remotely almost periodic solutions for the scalar differential
equation $x'=f(t,x),$ where $f:\mathbb R\times \mathbb R\to
\mathbb R$ is a continuous, monotone in $x$ and remotely almost
periodic in $t$ function. We prove that every solution $\varphi$ of this equation bounded on the semi-axis
$\mathbb R_{+}$ is remotely
almost periodic. This statement is a generalization of the
well-known Opial's theorem for remotely almost periodic scalar
differential equations. We also establish a similar statement for
scalar difference equations.
\end{abstract}

\maketitle

\section{Introduction}\label{Sec1}

The notion of remotely almost periodicity on the real axis
$\mathbb R$ for the scalar functions was introduced and studied by
Sarason \cite{Sar_1984}. Remotely almost periodic functions on the
semi-axis $\mathbb R_{+}$ with the values in the Banach space were
introduced and studied by Ruess and Summers \cite{RS_1986}.
Remotely almost periodic functions on the real axis with the
values in the Banach spaces were introduced and studied by
Baskakov \cite{Bas_2013}. He calls theses functions "almost
periodic at infinity". Remotely almost periodic on the real-axis
$\mathbb R$ solutions of ordinary differential equations with
remotely almost periodic on $\mathbb R$ coefficient were studied
by Maulen, Castillo, Kostic and Pinto \cite{MCKP_2021}.

Denote by $\mathbb R :=(-\infty,+\infty), \mathbb
R_{+}:=[0,+\infty), \mathbb T\in \{\mathbb R_{+}, \mathbb R\}$ and
$C(\mathbb T,\mathbb R)$ (respectively, $C(\mathbb T\times \mathbb
R,\mathbb R)$ the space of all continuous functions $\varphi
:\mathbb T\to \mathbb R$ (respectively, $f:\mathbb T\times \mathbb
R\to \mathbb R$) equipped with the compact-open topology. Let
$(C(\mathbb T,\mathbb R),\mathbb R,\sigma)$ (respectively,
$(C(\mathbb T\times \mathbb R,\mathbb R),\mathbb R,\sigma)$) be
the shift dynamical system \cite[Ch.I]{Che_2015} on the space
$C(\mathbb T,\mathbb R)$ (respectively, on the space $C(\mathbb
T\times \mathbb R,\mathbb R)$).

\begin{definition}\label{dfI1} Recall that a function $\varphi \in C(\mathbb R,\mathbb R)$
is said to be
\begin{enumerate}
\item almost periodic \cite[Ch.I]{Che_2024B} if for every
$\varepsilon
>0$ there exists a positive number $l=l(\varepsilon)$ such that on
every segment $[a,a+l]$ there exists at least one number $\tau$
for which $|\varphi(t+\tau)-\varphi(t)|<\varepsilon$ for all $t\in
\mathbb R$; \item asymptotically almost periodic
\cite[Ch.I]{Che_2024B} (respectively, asymptotically
$\tau$-periodic or asymptotically stationary) if there are $p,r\in
C(\mathbb R,\mathbb R)$ such that $\varphi =p+r$, $p$ is almost
periodic (respectively, $\tau$-periodic or stationary) and
$|r(t)|\to 0$ as $t\to +\infty$; \item remotely almost periodic
\cite{Che_2024P,Che_2024.1,RS_1986,Sar_1984} if for every
$\varepsilon
>0$ there exists a positive number $l=l(\varepsilon)$ such that on every segment
$[a,a+l]$ there exists at least one number $\tau$ and
$L(\varepsilon,\tau)>0$ so that
$|\varphi(t+\tau)-\varphi(t)|<\varepsilon$ for all $t\ge
L(\varepsilon,\tau)$.
\end{enumerate}
\end{definition}

\begin{remark}\label{remI1} 1. Every almost periodic function is asymptotically almost
periodic.

2. Every asymptotically almost periodic (respectively,
asymptotically $\tau$-periodic or asymptotically stationary)
function is remotely almost periodic (respectively, remotely
$\tau$-periodic or remotely stationary).

3. The converse statements (to the first two statements) are
false.
\end{remark}

\begin{definition}\label{defI2}
A function $\varphi \in C(\mathbb R,\mathbb R)$ is said to be:
\begin{enumerate}
\item remotely $\tau$-periodic if $\lim\limits_{t\to
+\infty}|\varphi(t+\tau)-\varphi(t)|=0$; \item remotely stationary
if it is remotely $\tau$-periodic for every $\tau \in \mathbb T$.
\end{enumerate}
\end{definition}

\begin{example}\label{exI1} {\em Consider a function $\varphi \in C(\mathbb R,\mathbb
R)$ defined by $\varphi(t):= \sin\ln (1+|t|)$ for all $t\in
\mathbb R$. Note that
\begin{equation}\label{eqI1}
|\varphi(t+\tau)-\varphi(t)|=2|\sin \frac{\ln
(1+|t+\tau|)-\ln(1+|t|)}{2}||\cos\frac{\ln
(1+|t+\tau|)+\ln(1+|t|)}{2}|\le \nonumber
\end{equation}
\begin{equation}\label{eqI1.1}
\ln \frac{\frac{1}{|t|}+|1+\frac{\tau}{|t|}|}{1+\frac{1}{|t|}}
\end{equation}
for all $t \in \mathbb R\setminus \{0\}$ and $\tau \in \mathbb R$.
Passing to the limit in the inequality (\ref{eqI1}) as $t\to
+\infty$ we obtain $\lim\limits_{t\to
+\infty}|\varphi(t+\tau)-\varphi(t)|=0$ for every $\tau \in
\mathbb R$ and, consequently, the function $\varphi$ is remotely
stationary.}
\end{example}

\begin{remark}\label{remI2} Every remotely $\tau$-periodic function is
remotely almost periodic. The converse statement is false.
\end{remark}

\begin{definition}\label{defI3}
A function $f\in C(\mathbb T\times \mathbb R,\mathbb R)$ is said
to be positively regular if for every $g\in H^{+}(f)$ and $v\in
\mathbb R$ there exists  a unique solution $\varphi(t,v,g)$ of the
equation $y'=g(t,v)$ passing through the point $v\in \mathbb R$ at
the initial moment $t=0$ and defined on $\mathbb
R_{+}:=[0,+\infty)$, where $H^{+}(f)$ is the closure of the set of
all translations $\{f^{h}|\ h\ge 0,\ f^{h}(t,x):=f(t+h,x)$ for all
$(t,x)\in \mathbb T\times \mathbb R\}$ in the space $C(\mathbb
T\times \mathbb R,\mathbb R)$.
\end{definition}

It is well known the following Massera's result \cite{Mas_1950}
(see also \cite[Ch.XII]{Fin_1974} and \cite[Ch.II]{Plis_1964}).

\begin{definition}\label{defAPP1}
A function $f\in C(\mathbb T\times \mathbb R,\mathbb R)$ is said
to be asymptotically almost periodic (respectively, asymptotically
$\tau$-periodic or asymptotically stationary) in time $t$
uniformly with respect to (shortly w.r.t.) $x$ on every compact
subset from $\mathbb R$ if there are functions $p,r\in C(\mathbb
R\times \mathbb R,\mathbb R)$ such that
\begin{enumerate}
\item $f(t,x)=p(t,x)+r(t,x)$ for all $(t,x)\in \mathbb T\times
\mathbb R$; \item the function $p$ is almost periodic
(respectively, $\tau$-periodic or stationary) in time $t$ and
$\lim\limits_{t\to +\infty}|r(t,x)|=0$ uniformly w.r.t. $x$ on
every compact subset from $\mathbb R$.
\end{enumerate}
\end{definition}

\begin{theorem}\label{thI1} (\emph{Massera}). Assume that the following
conditions are fulfilled:
\begin{enumerate}
\item the function $f\in C(\mathbb T\times \mathbb R,\mathbb R)$
is $\tau$-periodic ($\tau >0$) in time, i.e., $f(t+\tau,x)=f(t,x)$
for every $(t,x)\in \mathbb T\times \mathbb R$; \item $f$ is
positively regular; \item the equation
\begin{equation}\label{eq01}
x'(t)=f(t,x)
\end{equation}
admits a solution $\varphi(t,u_0,f)$ bounded on $\mathbb R_{+}$.
\end{enumerate}

Then the solution $\varphi(t,u_0,f)$ is asymptotically
$\tau$-periodic, i.e., there exists a $\tau$-periodic function
$p:\mathbb R\to \mathbb R$ such that $\lim\limits_{t\to
+\infty}|\varphi(t,u_0,f)-p(t)|=0.$
\end{theorem}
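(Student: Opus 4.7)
The plan is to reduce the asymptotic behaviour of the bounded solution $\varphi(\cdot,u_0,f)$ to the dynamics of the Poincar\'e period map, exploiting the fact that in dimension one the period map of an equation with uniqueness is automatically monotone.

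\emph{Step 1: the period map.} I would define $T:\mathbb R\to\mathbb R$ by $T(v):=\varphi(\tau,v,f)$. Positive regularity guarantees that $T$ is well defined and continuous, and the $\tau$-periodicity of $f$ gives the cocycle-like identity $\varphi(t+\tau,v,f)=\varphi(t,T(v),f)$ for all $t\ge 0$ (the map $t\mapsto\varphi(t+\tau,v,f)$ solves $x'=f(t,x)$ with initial value $T(v)$, so uniqueness forces the identity). Because two solutions of the scalar equation with uniqueness that coincide at one moment must coincide everywhere, distinct solutions cannot cross; consequently $v_1<v_2$ implies $\varphi(t,v_1,f)<\varphi(t,v_2,f)$ for all $t\ge 0$, so $T$ is strictly increasing.

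\emph{Step 2: convergence of the discrete orbit.} The sequence $v_n:=T^n(u_0)=\varphi(n\tau,u_0,f)$ is bounded by hypothesis (iii). Monotonicity of $T$ forces the orbit to be monotone: if $T(u_0)\ge u_0$ then iteratively $v_{n+1}\ge v_n$, and if $T(u_0)\le u_0$ then $v_{n+1}\le v_n$. A bounded monotone sequence converges, so $v_n\to v^{*}\in\mathbb R$, and continuity of $T$ yields $T(v^{*})=v^{*}$.

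\emph{Step 3: extracting the periodic profile.} Set $p(t):=\varphi(t,v^{*},f)$. Plugging $T(v^{*})=v^{*}$ into the identity of Step~1 gives $p(t+\tau)=\varphi(t,T(v^{*}),f)=\varphi(t,v^{*},f)=p(t)$, so $p$ is $\tau$-periodic. For arbitrary $t\ge 0$ write $t=n\tau+s$ with $s\in[0,\tau)$; then $\varphi(t,u_0,f)=\varphi(s,v_n,f)$ and $p(t)=\varphi(s,v^{*},f)$, so $|\varphi(t,u_0,f)-p(t)|=|\varphi(s,v_n,f)-\varphi(s,v^{*},f)|$. The main obstacle I foresee is passing from the pointwise convergence $v_n\to v^{*}$ to uniform-in-$s$ convergence of $\varphi(s,v_n,f)$ to $\varphi(s,v^{*},f)$. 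Since $\{v_n\}\cup\{v^{*}\}$ lies in a compact set $K\subset\mathbb R$ and $f\in C(\mathbb R\times\mathbb R,\mathbb R)$, positive regularity makes the solution map $(s,v)\mapsto\varphi(s,v,f)$ continuous on $\mathbb R_{+}\times\mathbb R$, hence uniformly continuous on the compact rectangle $[0,\tau]\times K$; this delivers $\sup_{s\in[0,\tau]}|\varphi(s,v_n,f)-\varphi(s,v^{*},f)|\to 0$ as $n\to\infty$, which is exactly the required uniform estimate.
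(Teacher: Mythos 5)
Your argument is essentially correct, but note that the paper itself gives no proof of this statement: Theorem \ref{thI1} is quoted as Massera's classical result with references to Massera, Fink and Pliss, so there is nothing internal to compare against. What you wrote is in fact the standard scalar-case Massera argument: pass to the Poincar\'e map $T(v)=\varphi(\tau,v,f)$, use non-crossing of solutions to get monotonicity of $T$, conclude that the bounded orbit $v_n=T^n(u_0)=\varphi(n\tau,u_0,f)$ is monotone and hence convergent to a fixed point $v^{*}$, and then transfer the convergence $v_n\to v^{*}$ to the flow by continuous dependence on the compact rectangle $[0,\tau]\times K$. This is also in the spirit of the machinery the paper uses elsewhere (compare Theorem \ref{thAAP1}, which characterizes asymptotic $\tau$-periodicity by convergence of the sequence $\{\pi(k\tau,x)\}$, and the monotone-cocycle framework of Section \ref{Sec3}), so the route is natural for this setting.

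One inaccuracy to fix: from positive regularity you only get \emph{forward} uniqueness, and forward uniqueness does not exclude the merging of solutions, so the strict inequality ``$v_1<v_2$ implies $\varphi(t,v_1,f)<\varphi(t,v_2,f)$ for all $t\ge 0$'' is not justified (solutions of non-Lipschitz scalar equations can coalesce in finite time without violating forward uniqueness). What non-crossing does give is the weak form $v_1\le v_2\Rightarrow\varphi(t,v_1,f)\le\varphi(t,v_2,f)$: if the order were ever reversed, at the last time the two solutions agree they would have to separate forward in time from a common value, contradicting forward uniqueness applied to the shifted equation $x'=f^{t_0}(t,x)$. Since Step~2 only needs $T$ to be non-decreasing (so that $T(u_0)\ge u_0$ propagates to $v_{n+1}\ge v_n$, and symmetrically in the other case), the weak monotonicity suffices and the rest of your proof, including the uniform-continuity argument in Step~3 and the periodic extension of $p(t)=\varphi(t,v^{*},f)$ from $\mathbb R_{+}$ to $\mathbb R$, goes through unchanged.
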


For the asymptotically $\tau$-periodic equations (\ref{eq01}) we
have the following result \cite{Che_2023wp}.

\begin{theorem}\label{thI2} (\emph{Cheban}) Suppose that the following
conditions hold:
\begin{enumerate}
\item the function $f\in C(\mathbb R\times \mathbb R,\mathbb R)$
is asymptotically $\tau$-periodic ($\tau >0$) in time; \item $f$
is positively regular; \item the equation (\ref{eq01}) admits a
solution $\varphi(t,u_0,f)$ bounded on $\mathbb R_{+}$.
\end{enumerate}

Then the solution $\varphi(t,u_0,f)$ is remotely $\tau$-periodic,
i.e., $\lim\limits_{t\to
+\infty}|\varphi(t+\tau,u_0,f)-\varphi(t,u_0,f)|=0.$
\end{theorem}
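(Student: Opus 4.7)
The plan is to reduce the problem to the purely $\tau$-periodic setting covered by Massera's Theorem~\ref{thI1} via the discrete trace $x_n := \varphi(n\tau,u_0,f)$. Decompose $f = p + r$ with $p$ $\tau$-periodic in $t$ and $r(t,x) \to 0$ uniformly on compact $x$-sets as $t\to +\infty$. Let $P:\mathbb{R}\to\mathbb{R}$ be the time-$\tau$ Poincar\'e map of $y' = p(t,y)$; by positive regularity $P$ is a continuous, strictly increasing homeomorphism of $\mathbb{R}$ whose fixed-point set $F$ parametrizes the $\tau$-periodic solutions of $y'=p(t,y)$. Let $\tilde{P}_n$ denote the analogous map from $n\tau$ to $(n+1)\tau$ for the full equation $y'=f(t,y)$; continuous dependence on the vector field gives $\tilde{P}_n \to P$ uniformly on compact subsets of $\mathbb{R}$. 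Thus $\{x_n\}$ is bounded and satisfies $x_{n+1} = P(x_n) + \varepsilon_n$ with $\varepsilon_n \to 0$.

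The heart of the argument is to show that every limit point of $\{x_n\}$ lies in $F$. Suppose $x_{n_k}\to x^*$ with $P(x^*)>x^*$, and set $\alpha := P(x^*)-x^*>0$. By continuity of $P$, pick $\eta>0$ so that $P(y)-y\geq \alpha/2$ on $[x^*-\eta,x^*+\eta]$. For $n$ large enough that $|\varepsilon_n|<\alpha/4$, any $x_n \in [x^*-\eta,x^*+\eta]$ forces $x_{n+1}\geq x_n+\alpha/4$, so the trajectory leaves this interval to the right within $\lceil 4\eta/\alpha\rceil$ steps; once $x_m>x^*+\eta$, monotonicity of $P$ yields $P(x_m)\geq P(x^*+\eta)\geq x^*+\eta+\alpha/2$, hence $x_{m+1}>x^*+\eta$, and inductively the trajectory never returns to $[x^*-\eta,x^*+\eta]$. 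This contradicts $x_{n_k}\to x^*$. The case $P(x^*)<x^*$ is symmetric, so every limit point of $\{x_n\}$ is a fixed point of $P$, whence $P(x_n)-x_n \to 0$ and therefore $x_{n+1}-x_n \to 0$.

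To upgrade to the continuous statement, put $\varphi^{(n)}(s):=\varphi(n\tau+s,u_0,f)$ on $[0,2\tau]$. Each $\varphi^{(n)}$ solves $y' = p(s,y) + r(n\tau+s,y)$ with initial value $x_n$; along any subsequence with $x_{n_k}\to x^*\in F$, continuous dependence on initial data and on the vector field gives $\varphi^{(n_k)}\to\chi_{x^*}$ uniformly on $[0,2\tau]$, where $\chi_{x^*}$ is the $\tau$-periodic solution of $y'=p(t,y)$ with $\chi_{x^*}(0)=x^*$. Since $\chi_{x^*}(s+\tau)=\chi_{x^*}(s)$, and every subsequence of $\{x_n\}$ has a further convergent sub-subsequence with limit in $F$, we obtain $\sup_{s\in[0,\tau]}|\varphi^{(n)}(s+\tau)-\varphi^{(n)}(s)|\to 0$, which is exactly $\lim_{t\to+\infty}|\varphi(t+\tau,u_0,f)-\varphi(t,u_0,f)|=0$.

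The main obstacle is the second paragraph: the ``no return'' argument that prevents accumulation at a non-fixed point of $P$ depends crucially on the monotonicity of the scalar Poincar\'e map combined with $\varepsilon_n \to 0$. This is where the scalar, positively regular nature of the equation is essential; in higher dimensions the corresponding $\omega$-limit analysis fails, which is why Massera-type results are specific to scalar systems.
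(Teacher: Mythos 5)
Your proof is correct in substance, but it is a genuinely different route from the paper: the paper states Theorem \ref{thI2} without proof, citing \cite{Che_2023wp}, and the machinery it develops for its main result (Theorem \ref{thM3.1}) goes through skew-product/nonautonomous dynamical systems, uniform stability of the cocycle and equi-almost periodicity of $\omega$-limit sets, which requires monotonicity of $f$ in $x$; your argument via the perturbed Poincar\'e iteration $x_{n+1}=P(x_n)+\varepsilon_n$ with $\varepsilon_n\to 0$ is elementary, stays at the level of the scalar time-$\tau$ map, and correctly avoids any monotonicity hypothesis on $f$ (none is assumed in Theorem \ref{thI2}); it is close in spirit to the discrete characterizations the paper quotes (Theorems \ref{thAAP1} and \ref{th1.3.9}) but does not use them. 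Two small repairs are needed in your write-up. First, positive regularity gives only forward uniqueness, so $P$ need not be injective, let alone a strictly increasing homeomorphism of $\mathbb R$; what forward uniqueness does give, by the standard non-crossing argument for scalar equations, is that $P$ is continuous and nondecreasing, and an inspection of your ``no return'' step shows that nondecreasing is all you use, so simply drop the homeomorphism claim and justify monotonicity this way. Second, for $P$ to be well defined and continuous and for $\tilde P_n\to P$ (and for the limit passage $\varphi^{(n_k)}\to\chi_{x^*}$ in your last paragraph) you should note explicitly that $f^{k\tau}\to p$ in the compact-open topology, hence $p\in H^{+}(f)$, so positive regularity applies to $p$ and to every shift $f^{n\tau}$, and the needed continuous dependence is exactly the joint continuity of the cocycle $(t,u,g)\mapsto\varphi(t,u,g)$ for regular right-hand sides, as in Example \ref{exS2}. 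With these adjustments the argument is complete, and it has the merit of being self-contained and more elementary than the dynamical-systems framework the paper relies on for its other results.
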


The following natural question arises.

\textbf{Problem}. Is the theorem \ref{thI2} true for remotely
almost periodic scalar differential equations (\ref{eq01}).

In general, the answer to this question is negative (Example
\ref{exAAP1}). The main result of this paper in the following
theorem is contained.

\begin{theorem}\label{thI3} Suppose that the following conditions
hold:
\begin{enumerate}
\item the function $f\in C(\mathbb T\times \mathbb R,\mathbb R)$
is asymptotically almost periodic (respectively, asymptotically
$\tau$-periodic or asymptotically stationary) in time; \item $f$
is positively regular; \item the function $f$ is monotone with
respect to $x$ uniformly in $t\in \mathbb R$, i.e., for every
$x_1,x_2\in \mathbb R$ with $x_1\le x_2$ we have $f(t,x_1)\le
f(t,x_2)$ for all $t\in \mathbb T$; \item the equation
(\ref{eq01}) admits a solution
$\varphi(t,u_0,f)$ bounded on $\mathbb R_{+}$.
\end{enumerate}

Then the solution $\varphi(t,u_0,f)$ is remotely almost periodic
(respectively, remotely $\tau$-periodic or remotely stationary).
\end{theorem}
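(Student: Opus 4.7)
My approach reduces the asymptotically almost periodic problem to the classical Opial theorem for scalar monotone almost periodic ODEs via a hull/$\omega$-limit argument, and then extracts remote almost periodicity from the resulting structure.

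First, I would decompose $f = p + r$ according to Definition \ref{defAPP1}, so that $p$ is almost periodic (respectively $\tau$-periodic or stationary) in $t$ uniformly on compact $x$-sets and $r(t,x) \to 0$ as $t \to +\infty$. Positive regularity together with boundedness of $\varphi$ on $\mathbb{R}_{+}$ makes $\varphi'(t) = f(t,\varphi(t))$ uniformly bounded, so $\{\varphi(\cdot + h, u_0, f) : h \geq 0\}$ is equicontinuous and relatively compact in $C(\mathbb{R},\mathbb{R})$. The skew-product motion $h \mapsto (\varphi(\cdot + h, u_0, f), f^h)$ then has a nonempty compact $\omega$-limit set $\Omega$ whose second-coordinate projection lies in $H(p)$ and whose first coordinates are bounded complete solutions of the corresponding almost periodic equation $y' = q(t,y)$, $q \in H(p)$. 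The Opial theorem for scalar monotone almost periodic ODEs, together with a Birkhoff-type recurrence argument --- two ordered bounded complete solutions of the same equation have non-decreasing bounded difference, whose derivative must average to zero on an almost-periodic base and therefore vanish identically --- identifies these as almost periodic (respectively $\tau$-periodic, constant) and makes the fibers of $\Omega$ essentially unique.

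Next, to extract remote almost periodicity of $\varphi$ itself, I would fix $\varepsilon > 0$, choose a common $\varepsilon$-almost period $\tau$ of $p$, set $\delta(t) := \varphi(t+\tau) - \varphi(t)$, and split
\begin{equation*}
\delta'(t) = \bigl[f(t+\tau,\varphi(t+\tau)) - f(t+\tau,\varphi(t))\bigr] + \bigl[f(t+\tau,\varphi(t)) - f(t,\varphi(t))\bigr].
\end{equation*}
Monotonicity in $x$ gives the first bracket the sign of $\delta(t)$, so $|\delta|$ behaves almost monotonically; the almost-period property of $p$ and the vanishing of $r$ at $+\infty$ bound the second bracket by $\varepsilon + o(1)$ for $t \geq L_1(\varepsilon,\tau)$. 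Once $\delta(t)$ is definitely positive it therefore stays positive up to a small correction, and similarly in the negative case, so sign changes of $\delta$ are controlled. If $\limsup_{t\to+\infty} |\delta(t)| > 0$, the telescoping identity $\varphi(t+n\tau) - \varphi(t) = \sum_{k=0}^{n-1} \delta(t+k\tau)$ would grow linearly in $n$, contradicting boundedness of $\varphi$ on $\mathbb{R}_{+}$. Hence $\delta(t) \to 0$, which is exactly remote almost periodicity; the $\tau$-periodic and stationary versions follow by the same computation specialized to an exact period or to an arbitrary $\tau$.

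The main obstacle is this last iteration/contradiction step. Because $\delta$ can a priori change sign and because the second bracket is only small up to $\varepsilon + o(1)$ rather than identically zero, making the argument airtight requires a careful bookkeeping showing that for large $t$ the sign of $\delta$ stabilizes (or that sign changes occur over intervals too short to cancel the linear growth of the telescoping sum). This is where the monotonicity of $f$ and the asymptotic vanishing of $r$ must be balanced most carefully, and it is essentially the sole new ingredient beyond the $\tau$-periodic argument of Theorem \ref{thI2}.
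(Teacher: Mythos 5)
The gap is in your second step, and it is not just the bookkeeping issue you flag — the strategy itself cannot be completed. First, $\varphi(t+\tau,u_0,f)-\varphi(t,u_0,f)\to 0$ is not what remote almost periodicity asserts (that would be remote $\tau$-periodicity for that one $\tau$; in the genuinely almost periodic case you can only hope for eventual smallness). More seriously, an $\varepsilon$-almost period of $p$ need not be an almost period of comparable quality for the solution: take $f(t,x)=\lambda\cos(\lambda t)$ with small $\lambda>0$ (almost periodic, monotone in $x$, regular), so $\varphi(t,x_0,f)=x_0+\sin(\lambda t)$ is bounded; the shift $\tau=\pi/\lambda$ satisfies $|f(t+\tau,x)-f(t,x)|\le 2\lambda$ for all $(t,x)$, hence is a $2\lambda$-almost period of the forcing, yet $\sup_t|\varphi(t+\tau,x_0,f)-\varphi(t,x_0,f)|=2$. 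So for prescribed $\varepsilon$ the relatively dense set of remote almost periods of $\varphi$ cannot be taken to be the $\varepsilon$-almost periods of $p$; it must consist of almost periods of the base at a finer, solution-dependent scale. The telescoping contradiction also does not close: boundedness of $\varphi$ only forces the partial sums $\sum_{k<n}\delta(t+k\tau)$ to remain bounded, which is perfectly compatible with $\limsup_{t\to+\infty}|\delta(t)|>0$ once $\delta$ oscillates, and the $\varepsilon+o(1)$ perturbation term permits exactly such sign changes; no refinement of the sign analysis can rule this out, as the example above shows.

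Compare with the paper's proof (Theorem \ref{thM3.1} via Theorems \ref{thV5} and \ref{thV6}), which never estimates $\delta$ directly. Monotonicity plus regularity give the nonexpansiveness $|\varphi(t,u_1,g)-\varphi(t,u_2,g)|\le|u_1-u_2|$ (Lemma \ref{lD1.1}); the compact $\omega$-limit set $\omega_{x_0}$ of the skew-product flow then carries a two-sided flow, admits a continuous equivariant section $\gamma=(\nu,\mathrm{Id})$ over the minimal base $\omega_{f}$, and every point of a fiber differs from that section by a constant (Theorems \ref{thV4.1}, \ref{thV5}). It is the uniform continuity of $\gamma$ on the compact base that converts $\delta(\varepsilon)$-almost periods of the base into $\varepsilon$-almost periods valid uniformly on $\omega_{x_0}$ (equi-almost periodicity), after which Theorem \ref{th1SRAP} yields remote almost periodicity of the point $(u_0,f)$, i.e.\ of the solution. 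This also corrects your first step: the fibers of your set $\Omega$ are not ``essentially unique'' — in the paper's own Example \ref{exAAP1} the fiber over the limiting equation is a whole interval of stationary points — what is true, and what the argument actually needs, is the constant-difference/section structure, not uniqueness. If you wish to keep your route, you must replace ``choose a common $\varepsilon$-almost period of $p$'' by ``choose a $\delta(\varepsilon)$-almost period of the base, where $\delta(\varepsilon)$ is a modulus of uniform continuity of the limiting section on $\omega_{f}$''; but obtaining that section and its continuity is precisely the content of the paper's proof, so there is no shortcut through the telescoping argument.
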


The proof of this statement in Subsection \ref{Sec4.1} is given
(Theorem \ref{thM3.1}).

The aim of this paper is studying the remotely almost periodic
solutions of scalar differential equations. This study continues
the author's series of works devoted to the study of remotely
almost periodic motions of dynamical systems and solutions of
differential equations
\cite{Che_2009},\cite{Che_2023wp}-\cite{Che_2024.1}.

The paper is organized as follows. In the second section we
collect some known notions and facts about remotely almost
periodic motions of dynamical systems and remotely almost periodic
functions. In the third section we study the remotely almost
periodic motions of the one-dimensional monotone nonautonomous
dynamical systems. The fourth section is dedicated to the
application of the general results obtained in the third section
to the scalar differential and difference equations.

\section{Preliminary}\label{Sec2}

Let $X$ and $Y$ be two complete metric spaces, let $\mathbb Z
:=\{0,\pm 1, \pm 2, \ldots \}$, $\mathbb S =\mathbb R$ or $\mathbb
Z$,  $\mathbb S_{+}=\{t \in \mathbb S |\quad t \ge 0 \}$ and
$\mathbb S_{-}=\{t \in \mathbb S| \quad t \le 0 \}$. Let $\mathbb
T \in \{\mathbb S,\ \mathbb S_{+}\}$ and $(X,\mathbb S_{+},\pi)$
(respectively, $(Y,\mathbb S, \sigma )$) be an autonomous
one-sided (respectively, two-sided) dynamical system on $X$
(respectively, on $Y$).

Let $(X,\mathbb T,\pi)$ be a dynamical system.

\begin{definition}\label{defSP1} A point $x\in X$ (respectively, a motion $\pi(t,x)$) is
said to be:
\begin{enumerate}
\item[-] stationary, if $\pi(t,x)=x$ for all $t\in \mathbb T$;
\item[-] $\tau$-periodic ($\tau >0$ and $\tau \in \mathbb T$), if
$\pi(\tau,x)=x$; \item[-] asymptotically stationary (respectively,
asymptotically $\tau$-periodic), if there exists a stationary
(respectively, $\tau$-periodic) point $p\in X$ such that
\begin{equation}\label{eqAP1*}
\lim\limits_{t\to \infty}\rho(\pi(t,x),\pi(t,p))=0.\nonumber
\end{equation}
\end{enumerate}
\end{definition}

\begin{theorem}\label{thAAP1}\cite[Ch.I]{Che_2009} A point $x\in X$ is asymptotically $\tau$-periodic if and
only if the sequences $\{\pi(k\tau,x)\}_{k=0}^{\infty}$ converges.
\end{theorem}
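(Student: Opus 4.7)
The plan is to prove both implications by combining the semigroup property of the dynamical system with continuity of $\pi$, treating the $\tau$-periodic orbit through the limit point as a ``target'' that the orbit of $x$ shadows.

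For the forward direction, I would assume $x$ is asymptotically $\tau$-periodic with witness point $p$. Since $p$ is $\tau$-periodic, $\pi(k\tau,p)=p$ for every $k\in\mathbb{N}$, so specializing the defining limit $\rho(\pi(t,x),\pi(t,p))\to 0$ to the subsequence $t=k\tau$ immediately gives $\pi(k\tau,x)\to p$, which is what we want.

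For the converse, suppose $\pi(k\tau,x)\to p$ as $k\to\infty$. I would first check that $p$ itself is $\tau$-periodic: by continuity of $\pi(\tau,\cdot)$,
\[
\pi(\tau,p)=\lim_{k\to\infty}\pi(\tau,\pi(k\tau,x))=\lim_{k\to\infty}\pi((k+1)\tau,x)=p.
\]
Next I would show that $\rho(\pi(t,x),\pi(t,p))\to 0$ as $t\to\infty$. The idea is to decompose $t=k\tau+s$ with $s\in[0,\tau)$; using the semigroup property together with $\pi(k\tau,p)=p$,
\[
\rho(\pi(t,x),\pi(t,p))=\rho(\pi(s,\pi(k\tau,x)),\pi(s,p)),
\]
so it suffices to prove that $\pi(s,\pi(k\tau,x))\to\pi(s,p)$ uniformly in $s\in[0,\tau]$.

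The main obstacle is exactly this uniformity, since $k$ and $s$ vary together. I expect to handle it by a standard compactness argument: for each fixed $s\in[0,\tau]$, joint continuity of $\pi$ at $(s,p)$ provides a product neighborhood on which $\rho(\pi(s',y),\pi(s,p))<\varepsilon/2$; covering the compact interval $[0,\tau]$ by finitely many such neighborhoods and taking the minimum of the associated radii yields a single $\eta>0$ with the property that $\rho(y,p)<\eta$ implies $\rho(\pi(s,y),\pi(s,p))<\varepsilon$ for every $s\in[0,\tau]$. Since $\pi(k\tau,x)\to p$, once $k$ is large enough this applies with $y=\pi(k\tau,x)$, giving the desired conclusion. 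In the discrete-time case $\mathbb{S}=\mathbb{Z}$ this step is trivial because $s$ only ranges over the finite set $\{0,1,\dots,\tau-1\}$, so the uniformity is automatic from pointwise continuity of each map $\pi(s,\cdot)$.
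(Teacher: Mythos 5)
The paper states this result only with a citation to \cite[Ch.I]{Che\_2009} and gives no proof of its own, so there is nothing internal to compare against; your argument is correct and is the standard one. Both directions are sound: specializing the defining limit to $t=k\tau$ gives the forward implication, and for the converse your compactness argument (finitely many product neighborhoods of the points $(s,p)$, $s\in[0,\tau]$, plus a triangle inequality through $\pi(s_i,p)$) correctly yields the equicontinuity of the family $\{\pi(s,\cdot)\}_{s\in[0,\tau]}$ at $p$ needed to pass from convergence of $\{\pi(k\tau,x)\}$ to $\rho(\pi(t,x),\pi(t,p))\to 0$.
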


\begin{definition}\label{defLS1} A point $\widetilde{x}\in X$ is said
to be $\omega$-limit for $x\in X$ if there exists a sequence
$\{t_k\}\subset \mathbb S_{+}$ such that $t_k\to +\infty$ and
$\pi(t_k,x)\to \widetilde{x}$ as $k\to \infty$.
\end{definition}

Denote by $\omega_{x}$ the set of all $\omega$-limit points of
$x\in X$.

If $(X,\mathbb S,\pi)$ is a two-sided dynamical system, then
$\alpha_{x}:=\{p\in X:$ there exists a sequence $t_k\to -\infty$
such that $\pi(t_k,x)\to p$ as $k\to \infty\}$.

\begin{definition}\label{defSAP1}  We will call
a point $x\in X$ (respectively, a motion $\pi(t,x)$) remotely
$\tau$-periodic ($\tau\in \mathbb T$ and $\tau
>0$) if
\begin{equation}\label{eqSAP_1}
 \lim\limits_{t\to
+\infty}\rho(\pi(t+\tau,x),\pi(t,x))=0 .
\end{equation}
\end{definition}

\begin{remark}\label{remS1.0} The motions of dynamical systems possessing the property
(\ref{eqSAP_1}) in the works of Cryszka \cite{Gry_2018} and
Pelczar \cite{Pel_1985} were studied.
\end{remark}

\begin{definition}\label{defLS02} A point $x$ is called Lagrange
stable (respectively, positively Lagrange stable), if the
trajectory $\Sigma_{x}:=\{\pi(t,x)|\ t\in \mathbb T\}$
(respectively, semi-trajectory $\Sigma^{+}_{x}:=\{\pi(t,x)|\ t\ge
0\}$) is a precompact subset of $X$.
\end{definition}

\begin{theorem}\label{th1.3.9}\cite[Ch.I]{Che_2020} Let $x\in X$ be
positively Lagrange stable and $\tau\in\mathbb T\ (\tau >0)$. Then
the following statements are equivalent:
\begin{enumerate}
\item[a.] the motion $\pi(t,x)$ is remotely $\tau$-periodic;
\item[b.] every point $p\in\omega_{x}$ is $\tau$-periodic.
\end{enumerate}
\end{theorem}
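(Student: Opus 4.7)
The plan is to prove the two implications separately, using continuity of the map $\pi(\tau,\cdot):X\to X$ together with positive Lagrange stability to pass $\omega$-limit information back and forth.

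For the implication (a)$\Rightarrow$(b), I would start with an arbitrary $p\in\omega_x$ and a sequence $t_k\to+\infty$ with $\pi(t_k,x)\to p$. By the cocycle identity $\pi(t_k+\tau,x)=\pi(\tau,\pi(t_k,x))$ and continuity of $\pi(\tau,\cdot)$, one gets $\pi(t_k+\tau,x)\to\pi(\tau,p)$. On the other hand, hypothesis (a) gives $\rho(\pi(t_k+\tau,x),\pi(t_k,x))\to 0$. Combining these forces $\pi(\tau,p)=p$, i.e.\ $p$ is $\tau$-periodic.

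For the reverse implication (b)$\Rightarrow$(a), I would argue by contradiction. If $\rho(\pi(t+\tau,x),\pi(t,x))\not\to 0$, there exist $\varepsilon>0$ and $t_k\to+\infty$ with $\rho(\pi(t_k+\tau,x),\pi(t_k,x))\ge\varepsilon$. Positive Lagrange stability of $x$ means the semi-trajectory $\Sigma_x^+$ is precompact, so after passing to a subsequence, $\pi(t_k,x)\to p\in\omega_x$. Continuity of $\pi(\tau,\cdot)$ then gives $\pi(t_k+\tau,x)\to\pi(\tau,p)$, and by (b), $\pi(\tau,p)=p$. Hence $\rho(\pi(t_k+\tau,x),\pi(t_k,x))\to\rho(p,p)=0$, contradicting the lower bound~$\varepsilon$.

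Neither direction hides a genuine obstacle, because everything reduces to continuity of the time-$\tau$ map and compactness of the $\omega$-limit set. The only point that warrants care is the use of precompactness of $\Sigma_x^+$ to guarantee that the sequences $\{\pi(t_k,x)\}$ admit convergent subsequences with limits lying in $\omega_x$; this is precisely where the standing hypothesis of positive Lagrange stability is consumed, and without it the reverse implication fails. The whole argument is elementary once this framework is set up, and it does not require any distinction between the one-sided ($\mathbb T=\mathbb S_+$) and two-sided ($\mathbb T=\mathbb S$) cases.
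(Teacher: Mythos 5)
Your proof is correct. Note that the paper itself gives no proof of this theorem --- it is quoted from \cite[Ch.I]{Che_2020} --- so there is nothing in the text to compare against; your two implications (continuity of the time-$\tau$ map for (a)$\Rightarrow$(b), and precompactness of $\Sigma_x^{+}$ plus a subsequence extraction for the contrapositive of (b)$\Rightarrow$(a)) constitute the standard, complete argument, and you correctly identify positive Lagrange stability as the hypothesis consumed only in the second implication.
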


\begin{definition}\label{defSAP2} A point $x$ (respectively, a
motion $\pi(t,x)$) is said to be remotely stationary, if it is
remotely $\tau$-periodic for every $\tau \in \mathbb T$.
\end{definition}

\begin{coro}\label{corSAP1} Let $x\in X$ be positively Lagrange stable. Then the following
statements are equivalent:
\begin{enumerate}
\item[a.] the motion $\pi(t,x)$ is remotely stationary; \item[b.]
every point $p\in\omega_{x}$ is stationary.
\end{enumerate}
\end{coro}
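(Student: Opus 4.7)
The plan is to derive Corollary \ref{corSAP1} directly from Theorem \ref{th1.3.9} by quantifying the conclusion over $\tau$. Remotely stationary, by Definition \ref{defSAP2}, is by definition remotely $\tau$-periodic for every $\tau \in \mathbb T$ with $\tau>0$; and, by Definition \ref{defSP1}, a point $p$ is stationary precisely when $\pi(t,p)=p$ for every $t\in\mathbb T$. Thus the equivalence ought to follow by applying Theorem \ref{th1.3.9} separately for each admissible $\tau$ and then collecting the information across all such $\tau$.

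For the implication (a) $\Rightarrow$ (b), fix an arbitrary $p\in\omega_x$. Positive Lagrange stability of $x$ is preserved as a hypothesis, so Theorem \ref{th1.3.9} applies for every $\tau>0$: since $\pi(t,x)$ is remotely $\tau$-periodic by hypothesis, $p$ must be $\tau$-periodic, i.e.\ $\pi(\tau,p)=p$. This holds for every $\tau>0$ in $\mathbb T$, which (together with $\pi(0,p)=p$ and, in the two-sided case, the identity $\pi(-\tau,p)=\pi(-\tau,\pi(\tau,p))=\pi(0,p)=p$) forces $\pi(t,p)=p$ for all $t\in\mathbb T$; that is, $p$ is stationary.

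For the implication (b) $\Rightarrow$ (a), suppose every $p\in\omega_x$ is stationary. Then in particular, for each fixed $\tau>0$ in $\mathbb T$, every $p\in\omega_x$ is $\tau$-periodic. Theorem \ref{th1.3.9} (whose hypothesis of positive Lagrange stability is exactly what we have) then yields that $\pi(t,x)$ is remotely $\tau$-periodic. Since $\tau>0$ was arbitrary, $\pi(t,x)$ is remotely $\tau$-periodic for every $\tau \in \mathbb T$, and hence remotely stationary by Definition \ref{defSAP2}.

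The only mildly nontrivial point is the passage from ``$\pi(\tau,p)=p$ for every $\tau>0$'' to full stationarity $\pi(t,p)=p$ for every $t\in\mathbb T$, which is handled by the (semi)group property of the dynamical system as indicated above. Everything else is a direct quantified reading of Theorem \ref{th1.3.9} against the definitions, so no genuine obstacle is expected.
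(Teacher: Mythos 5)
Your proof is correct and follows essentially the same route as the paper, which simply notes that the corollary "follows directly from Definition \ref{defSAP2} and Theorem \ref{th1.3.9}"; your version merely spells out the quantification over $\tau$ and the small step from ``$\tau$-periodic for every $\tau>0$'' to stationarity, both of which are handled correctly.
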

\begin{proof} This statement follows directly from Definition \ref{defSAP2}
and Theorem \ref{th1.3.9}.
\end{proof}

\begin{definition}\label{defRAP0} A subset $A\subseteq \mathbb
T$ is said to be relatively dense in $\mathbb T$ if there exists a
positive number $l\in \mathbb T$ such that $[a,a+l]\bigcap A\not=
\emptyset$ for all $a\in \mathbb T$, where $[a,a+l]:=\{x\in
\mathbb T|\ a\le x\le a+l\}$.
\end{definition}

\begin{remark}\label{remDT1} For every $\tau >0$ ($\tau \in \mathbb
T$) the set $A:=\{k\tau |\ k\in \mathbb Z\}\bigcap \mathbb T$ is
relatively dense in $\mathbb T$.
\end{remark}

\begin{definition}\label{defAP1} A point $x\in X$ of the dynamical
system $(X,\mathbb T,\pi)$ is said to be:
\begin{enumerate}
\item almost periodic if for every $\varepsilon >0$ the set
\begin{equation}\label{eqAP1}
\mathcal P(\varepsilon,p):=\{\tau \in \mathbb T|\
\rho(\pi(t+\tau,p),\pi(t,p))<\varepsilon \ \ \mbox{for all}\ t\in
\mathbb T\}\nonumber
\end{equation}
is relatively dense in $\mathbb T$; \item positively Poisson
stable if $x\in \omega_{x}$; \item asymptotically stationary
(respectively, asymptotically $\tau$-periodic, asymptotically
almost periodic or positively asymptotically Poisson stable) if
there exists a stationary (respectively, $\tau$-periodic, almost
periodic or positively Poisson stable) point $p\in X$ such that
\begin{equation}\label{eqAP3}
\lim\limits_{t\to \infty}\rho(\pi(t,x),\pi(t,p))=0.\nonumber
\end{equation}
\end{enumerate}
\end{definition}

\begin{definition}\label{defRAP1} A point $x\in X$ (respectively,
a motion $\pi(t,x)$) is said to be remotely almost periodic
\cite{RS_1986} if for arbitrary positive number $\varepsilon$
there exists a relatively dense subset $\mathcal
P(\varepsilon,x)\subseteq \mathbb T$ such that for every $\tau \in
\mathcal P(\varepsilon,x)$ there exists a number
$L(\varepsilon,x,\tau)>0$ for which we have
\begin{equation}\label{eqRAP1}
\rho(\pi(t+\tau,x),\pi(t,x))<\varepsilon \nonumber
\end{equation}
for all $t\ge L(\varepsilon,x,\tau)$.
\end{definition}

\begin{remark}\label{remAP1} Every almost periodic
(respectively< asymptotically almost periodic) point $x\in X$ is
remotely almost periodic.
\end{remark}

\begin{lemma}\label{lRAP_01} \cite{Che_2024_1} Every remotely $\tau$-periodic (respectively,
remotely stationary) point $x$ of the dynamical system $(X,\mathbb
T,\pi)$ is remotely almost periodic.
\end{lemma}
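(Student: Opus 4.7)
The plan is to build, for each $\varepsilon>0$, a relatively dense set of $\varepsilon$--almost periods (in the remote sense) out of the integer multiples of $\tau$. By Remark \ref{remDT1} the set $A_\tau:=\{k\tau:k\in\mathbb Z\}\cap\mathbb T$ is relatively dense in $\mathbb T$, so it is the natural candidate for $\mathcal P(\varepsilon,x)$; what remains is to check that each $k\tau\in A_\tau$ is indeed a remote $\varepsilon$--period of $x$.

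The core step is to show that for every $k\in\mathbb Z$ with $k\tau\in\mathbb T$,
$$\lim_{t\to+\infty}\rho(\pi(t+k\tau,x),\pi(t,x))=0.$$
For $k\ge 1$ I would telescope along the segment $t,t+\tau,\ldots,t+k\tau$ and apply the triangle inequality to obtain
$$\rho(\pi(t+k\tau,x),\pi(t,x))\le\sum_{j=0}^{k-1}\rho\bigl(\pi((t+j\tau)+\tau,x),\pi(t+j\tau,x)\bigr).$$
For each fixed $j$, letting $s_j:=t+j\tau\to+\infty$ as $t\to+\infty$, the $j$-th summand tends to zero by the defining property (\ref{eqSAP_1}) of remote $\tau$-periodicity; hence the finite sum tends to zero as well. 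In the two-sided case $\mathbb T=\mathbb S$ and $k<0$, I would reduce to the previous case via the substitution $s:=t+k\tau$, which still satisfies $s\to+\infty$ as $t\to+\infty$, so that $\rho(\pi(t+k\tau,x),\pi(t,x))=\rho(\pi(s,x),\pi(s+|k|\tau,x))\to 0$.

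Having established this pointwise limit, for any $\varepsilon>0$ and any $k\tau\in A_\tau$ I simply choose $L(\varepsilon,x,k\tau)>0$ so that the above quantity is less than $\varepsilon$ whenever $t\ge L(\varepsilon,x,k\tau)$. Then $A_\tau\subseteq\mathbb T$ is the desired relatively dense set, verifying the remotely almost periodic condition of Definition \ref{defRAP1}. The remotely stationary case is then immediate: by Definition \ref{defSAP2}, $x$ is remotely $\tau$-periodic for some (in fact every) $\tau>0$ in $\mathbb T$, so the argument just given applies.

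There is no real obstacle here; the only point requiring care is that $L$ depends on both $\varepsilon$ and the chosen almost period $k\tau$, which is precisely what Definition \ref{defRAP1} permits (contrast this with the classical almost-periodicity definition, where a single $L$ must work for all $\tau$ in the relatively dense set). Once that is noted, the telescoping argument goes through cleanly.
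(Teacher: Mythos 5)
Your argument is correct. Note that the paper itself gives no proof of Lemma \ref{lRAP_01}: it is quoted from \cite{Che_2024_1}, and the closest the paper comes to an argument is the neighbouring Lemma \ref{lRAP_010}, whose proof is dismissed as ``follows directly from the corresponding definitions.'' Your write-up supplies exactly the details that are being taken for granted there: the set $\{k\tau\}\cap\mathbb T$ is relatively dense by Remark \ref{remDT1}, and the telescoping estimate
$\rho(\pi(t+k\tau,x),\pi(t,x))\le\sum_{j=0}^{k-1}\rho(\pi(t+(j+1)\tau,x),\pi(t+j\tau,x))$
together with $t+j\tau\to+\infty$ shows each multiple $k\tau$ is a remote $\varepsilon$-period, with $L$ allowed to depend on $k\tau$ as Definition \ref{defRAP1} permits; the case $k<0$ (relevant only when $\mathbb T=\mathbb S$) is correctly reduced to the positive case by symmetry of the metric, and the remotely stationary case is indeed immediate since remote stationarity implies remote $\tau$-periodicity for any fixed $\tau>0$. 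So your proof is a legitimate, self-contained substitute for the external citation, and it is the natural argument; the only difference from the paper is that the paper outsources it, or, via Lemma \ref{lRAP_010}, records the stronger conclusion that the whole set $\tau\mathbb Z\cap\mathbb T$ (respectively all of $\mathbb T$) consists of remote $\varepsilon$-periods, which your computation in fact also establishes.
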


\begin{lemma}\label{lRAP_010} A point $x$
is remotely $\tau$-periodic (respectively, remotely stationary) if
and only if for every $\varepsilon
>0$ there exists a relatively dense in $\mathbb T$ subset $\mathcal
P(x,\varepsilon)$ such that
\begin{enumerate}
\item $\{\tau \mathbb Z\}\bigcap \mathbb T \subset \mathcal
P(x,\varepsilon)$ (respectively, $\mathbb T \subseteq \mathcal
P(x,\varepsilon)$) and \item for every $\tau \in \mathcal
P(x,\varepsilon)$ there exists a number $L(x,\varepsilon,\tau)>0$
for which we have
\begin{equation}\label{eqE1}
\rho(\pi(t+\tau,x),\pi(t,x))<\varepsilon \nonumber
\end{equation}
for all $t\ge L(x,\varepsilon,\tau)$.
\end{enumerate}
\end{lemma}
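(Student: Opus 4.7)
The plan is to verify both the ``remotely $\tau$-periodic'' and the ``remotely stationary'' equivalences directly from the definitions; the only nontrivial observation is a telescoping triangle inequality showing that the remotely $\tau$-periodic condition self-propagates from $\tau$ to every integer multiple $k\tau$ of $\tau$ lying in $\mathbb T$. For the forward implication of the $\tau$-periodic case, suppose $\lim_{t\to+\infty}\rho(\pi(t+\tau,x),\pi(t,x))=0$. For $k\in\mathbb Z_+$ I would write
\[
\rho(\pi(t+k\tau,x),\pi(t,x))\le\sum_{j=0}^{k-1}\rho(\pi((t+j\tau)+\tau,x),\pi(t+j\tau,x)),
\]
observe that each summand tends to zero as $t\to+\infty$ (since $t+j\tau\to+\infty$ too), and conclude the left-hand side does as well; in the two-sided case $\mathbb T=\mathbb S$ with $k<0$, the substitution $s:=t+k\tau$ reduces matters to the positive case. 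Given $\varepsilon>0$, I would then set
\[
\mathcal P(x,\varepsilon):=\{\tau'\in\mathbb T:\,\exists L>0\text{ with }\rho(\pi(t+\tau',x),\pi(t,x))<\varepsilon\text{ for all }t\ge L\}.
\]
The preceding calculation gives $\{k\tau:k\in\mathbb Z\}\cap\mathbb T\subseteq\mathcal P(x,\varepsilon)$; Remark~\ref{remDT1} ensures the former is relatively dense, hence so is $\mathcal P(x,\varepsilon)$, and condition (ii) holds by construction.

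For the converse, since $\tau\in\{\tau\mathbb Z\}\cap\mathbb T\subseteq\mathcal P(x,\varepsilon)$, condition (ii) supplies, for every $\varepsilon>0$, an $L(x,\varepsilon,\tau)$ beyond which $\rho(\pi(t+\tau,x),\pi(t,x))<\varepsilon$; this yields the desired limit. The remotely stationary equivalence is handled analogously: in the forward direction I would take $\mathcal P(x,\varepsilon):=\mathbb T$, which works because $x$ is remotely $\tau'$-periodic for every $\tau'\in\mathbb T$ by hypothesis; in the backward direction, applying condition (ii) to each $\tau'\in\mathbb T\subseteq\mathcal P(x,\varepsilon)$ gives the needed limit. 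I do not foresee a serious obstacle—the statement is essentially an unpacking of definitions—so the only care needed is to distinguish the one-sided ($\mathbb T=\mathbb S_+$) and two-sided ($\mathbb T=\mathbb S$) settings when closing under integer multiples $k\tau$.
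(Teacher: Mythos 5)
Your proposal is correct, and it is essentially the argument the paper has in mind: the paper's own proof consists of the single remark that the lemma ``follows directly from the corresponding definitions,'' and your telescoping triangle-inequality step (propagating remote $\tau$-periodicity to all multiples $k\tau\in\mathbb T$, with the shift $s:=t+k\tau$ for negative multiples in the two-sided case) together with Remark~\ref{remDT1} is exactly the definitional unpacking that remark leaves implicit. The only point to state explicitly, which you already handle implicitly, is that in the remotely stationary case the elements $\tau'\le 0$ of $\mathbb T$ are covered trivially ($\tau'=0$) or by the same substitution ($\tau'<0$).
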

\begin{proof} This statement follows directly from the corresponding
definitions.
\end{proof}

\begin{definition}\label{defEAP1} A positively invariant subset $M\subseteq
X$is said to be equi-almost periodic if for every $\varepsilon >0$
the set
\begin{equation}\label{eqEAP1}
\mathcal F(\varepsilon,M):=\{\tau \in \mathbb R |\
\rho(\pi(\tau,x),x)<\varepsilon\ \ \ \forall\ x\in M\} \nonumber
\end{equation}
is relatively dense.
\end{definition}

\begin{theorem}\label{th1SRAP} \cite{Che_2024_1,RS_1986} Assume that the point $x\in X$ is
positively Lagrange stable. The following conditions are
equivalent:
\begin{enumerate}
\item the point $x$ is remotely almost periodic; \item the set
$\omega_{x}$ is equi-almost periodic.
\end{enumerate}
\end{theorem}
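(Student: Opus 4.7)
The plan is to work in the skew-product formalism over the hull $H^{+}(f)$ and to reduce the statement to a result about motions of a scalar monotone nonautonomous dynamical system over an \emph{almost periodic} base (presumably one of the main results of Section~3, used here through Theorem~\ref{th1SRAP}). First I would write the asymptotic almost periodicity assumption as $f=p+r$ with $p$ almost periodic in $t$ uniformly on compacta and $|r(t,x)|\to 0$ uniformly on compacta in $x$ as $t\to+\infty$. This gives immediately that the $\omega$-limit set (under the shift $\sigma$ on $C(\mathbb T\times\mathbb R,\mathbb R)$) of the point $f$ equals the hull $H(p)$ of the almost periodic function $p$, and that $H^{+}(f)=\{f^{h}:h\ge 0\}\cup H(p)$ with $H(p)$ a minimal almost periodic set. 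Positive regularity then yields a well-defined cocycle $\varphi(t,v,g)$ over the base $(H^{+}(f),\mathbb S_{+},\sigma)$, and the induced skew-product semiflow $\pi(t,(v,g)):=(\varphi(t,v,g),g^{t})$ on $\mathbb R\times H^{+}(f)$ preserves the pointwise monotone order in the fiber coordinate by hypothesis~(iii).

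Next I would use boundedness of $\varphi(\cdot,u_0,f)$ on $\mathbb R_{+}$ to conclude that the full skew-product trajectory through $x_0:=(u_0,f)$ is precompact, so $x_0$ is positively Lagrange stable and its $\omega$-limit set $\Omega:=\omega_{x_0}$ is a nonempty compact invariant set. By the description of $\omega_{f}$ above, the projection of $\Omega$ onto the base is the whole almost periodic minimal set $H(p)$. Hence $\Omega$ is a compact invariant subset of $\mathbb R\times H(p)$ for the limit skew-product semiflow driven by the almost periodic base $(H(p),\mathbb S,\sigma)$, and the fiber dynamics on $\Omega$ is still scalar and monotone. At this point I would invoke the general result of Section~3 (the analogue of Theorem~\ref{thM3.1} for monotone scalar cocycles over an almost periodic base) to conclude that every motion through a point of $\Omega$ is almost periodic, and that the almost periods may be chosen uniformly in the point; equivalently, $\Omega$ is equi-almost periodic in the sense of Definition~\ref{defEAP1}. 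This is the substantive step, and I expect it to be the main obstacle: one must combine the scalar monotone structure with the almost periodicity of the base to upgrade ``every minimal set is a copy of $H(p)$'' to equi-almost periodicity of the entire $\omega$-limit set, which is where the essentially one-dimensional nature of the problem is decisive.

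Finally, with $\Omega=\omega_{x_0}$ equi-almost periodic and $x_0$ positively Lagrange stable, Theorem~\ref{th1SRAP} (Ruess--Summers / Cheban) applies and gives that $x_0$ is a remotely almost periodic point of the skew-product semiflow; projecting to the fiber, this is exactly the statement that $\varphi(t,u_0,f)$ is a remotely almost periodic function, proving the main case. The two degenerate cases follow in the same way, with Theorem~\ref{th1.3.9} replacing Theorem~\ref{th1SRAP}: if $f$ is asymptotically $\tau$-periodic, then $p$ is $\tau$-periodic, $H(p)$ reduces to a single $\tau$-periodic orbit, every point of $\Omega$ is $\tau$-periodic under the skew-product, and Theorem~\ref{th1.3.9} yields that $\varphi$ is remotely $\tau$-periodic; if $f$ is asymptotically stationary, the same argument with Corollary~\ref{corSAP1} gives remotely stationary.
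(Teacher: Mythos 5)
There is a fundamental mismatch here: your proposal does not address the statement at all. Theorem~\ref{th1SRAP} is an abstract assertion about a positively Lagrange stable point $x$ of a dynamical system $(X,\mathbb T,\pi)$, namely the equivalence of (i) $x$ is remotely almost periodic and (ii) $\omega_{x}$ is equi-almost periodic. What you have written instead is an outline of the proof of the main application result for the scalar equation $x'=f(t,x)$ (essentially Theorem~\ref{thI3} / Theorem~\ref{thM3.1} / Corollary~\ref{corAAP1}): you decompose $f=p+r$, build the cocycle over $H^{+}(f)$, identify $\omega_{f}$ with $H(p)$, and argue that $\omega_{x_0}$ is equi-almost periodic. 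Worse, in your final step you explicitly \emph{invoke} Theorem~\ref{th1SRAP} to pass from ``$\omega_{x_0}$ equi-almost periodic'' to ``$x_0$ remotely almost periodic.'' Since that is precisely the implication (ii)$\Rightarrow$(i) of the theorem you were asked to prove, the argument is circular with respect to the task: you have used the target statement as a black box rather than proving it. (For what it is worth, the paper itself does not prove Theorem~\ref{th1SRAP} either; it cites it from \cite{Che_2024_1} and \cite{RS_1986}. But a blind proof attempt still has to engage with the actual content of the equivalence.)

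A genuine proof would have to establish both implications for an arbitrary positively Lagrange stable point. For (i)$\Rightarrow$(ii): given $\varepsilon>0$, take the relatively dense set $\mathcal P(\varepsilon,x)$ of remote $\varepsilon$-almost periods of $x$; for each $\tau\in\mathcal P(\varepsilon,x)$ and each $p\in\omega_{x}$, write $p=\lim_{k}\pi(t_k,x)$ with $t_k\to+\infty$, note that $\rho(\pi(t_k+\tau,x),\pi(t_k,x))<\varepsilon$ for $t_k\ge L(\varepsilon,x,\tau)$, and pass to the limit (using continuity of $\pi(\tau,\cdot)$ on the compact closure of the positive semi-trajectory) to get $\rho(\pi(\tau,p),p)\le\varepsilon$ for all $p\in\omega_{x}$, i.e.\ $\mathcal P(\varepsilon,x)\subseteq\mathcal F(\varepsilon',\omega_{x})$ for a suitable $\varepsilon'$. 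For (ii)$\Rightarrow$(i): use that $\rho(\pi(t,x),\omega_{x})\to 0$ as $t\to+\infty$ (a standard consequence of positive Lagrange stability), together with uniform continuity of $(s,z)\mapsto\pi(s,z)$ on $[0,l]\times K$ for the compact set $K=\overline{\Sigma^{+}_{x}}$, to transfer each $\tau\in\mathcal F(\varepsilon,\omega_{x})$ into a remote $3\varepsilon$-almost period of $x$ valid for all $t\ge L$. Neither of these steps appears in your write-up, so as a proof of Theorem~\ref{th1SRAP} the proposal is empty.
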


\section{One-dimensional Monotone Nonautonomous Dynamical
Systems}\label{Sec3}

Let $Y$ be a complete metric space, $(Y,\mathbb T, \sigma )$ be an
autonomous dynamical system on $Y$.

\begin{definition}\label{def2.1}(\emph{One-dimensional cocycle with the base
$(Y ,\mathbb T ,\sigma)$}). The triplet $\langle \mathbb R,
\varphi ,(Y , \mathbb T, \sigma )\rangle$ (or briefly $\varphi$)
is said to be a cocycle (see, for example,
\cite[Ch.II,IX]{Che_2015} and \cite{Sel_1971}) on the state space
$\mathbb R$ with the base $(Y,\mathbb T,\sigma )$ if the mapping
$\varphi : \mathbb R_{+} \times Y \times \mathbb R \to \mathbb R $
satisfies the following conditions:
\begin{enumerate}
\item $\varphi (0,y,u)=u $ for all $u\in \mathbb R$ and $y\in Y$;
\item $\varphi (t+\tau ,y ,u)=\varphi (t,\varphi (\tau
,u,y),\sigma(\tau,y))$ for all $ t, \tau \in \mathbb R_{+},u \in
\mathbb R$ and $ y \in Y$; \item the mapping $\varphi $ is
continuous.
\end{enumerate}
\end{definition}

\begin{definition}\label{defC1} (\emph{Monotone cocycles}). A cocycle $ \langle \mathbb R ,
\varphi,(Y,\mathbb T,\sigma)\rangle $ is said to be monotone if
for all $u_1,u_2\in \mathbb R$ with $u_1\le u_2$ we have
$\varphi(t,u_1,y)\le \varphi(t,u_2,y)$ for every $t\in \mathbb
R_{+}$ and $y\in Y$.
\end{definition}

\begin{definition}\label{def2.2} (\emph{Skew-product dynamical system}). Let $ \langle \mathbb R,
\varphi,(Y,\mathbb T,\sigma)\rangle $ be a cocycle on $ \mathbb R,
X:=\mathbb R\times Y$ and $\pi $ be a mapping from $ \mathbb R_{+}
\times X $ to $X$ defined by the equality $\pi =(\varphi
,\sigma)$, i.e., $\pi (t,(u,y))=(\varphi (t,u,y),\sigma(t,y))$ for
all $ t\in \mathbb R_{+}$ and $(u,y)\in \mathbb R\times Y$. The
triplet $ (X,\mathbb R_{+}, \pi)$ is an autonomous dynamical
system and it is called \cite{Sel_1971} a skew-product dynamical
system.
\end{definition}

\begin{definition}\label{def2.3} (\emph{Nonautonomous dynamical system}.) Let
$ \mathbb T_{1}\subseteq \mathbb T _{2} $ be two sub-semigroups of
the group $\mathbb S, (X,\mathbb T _{1},\pi ) $ and $(Y ,\mathbb T
_{2}, \sigma )$ be two autonomous dynamical systems and $h: X \to
Y$ be a homomorphism from $(X,\mathbb T _{1},\pi )$ to $(Y ,
\mathbb T _{2},\sigma)$ (i.e., $h(\pi(t,x))=\sigma(t,h(x)) $ for
all $t\in \mathbb T _{1} $, $ x \in X $ and $h $ is continuous),
then the triplet $\langle (X,\mathbb T _{1},\pi ),$ $ (Y ,$
$\mathbb T _{2},$ $ \sigma ),h \rangle $ is called (see
\cite{Bro79} and \cite{Che_2015}) a nonautonomous dynamical
system.
\end{definition}

\begin{example} \label{ex2.4} (A nonautonomous dynamical system generated by the cocycle $\varphi$)
{\em Let $\langle \mathbb R, \varphi ,(Y ,\mathbb T,\sigma)\rangle
$ be a cocycle, $(X,\mathbb T_{+},\pi ) $ be a skew-product
dynamical system ($X=\mathbb R\times Y, \pi =(\varphi,\sigma)$)
and $h= pr _{2}: X \to Y ,$ then the triplet $\langle (X,\mathbb
T_{+},\pi ),$ $(Y ,\mathbb T,\sigma ),h \rangle $ is a
nonautonomous dynamical system.}
\end{example}

\begin{definition}\label{defIS011} A function $\gamma :Y\to \mathbb R$ is
said to be upper semi-continuous\index{upper/low semi-continuous
function} (respectively, lower semi-continuous)
\cite[Ch.III]{Loj_1982} at the point $y_0$  if
$$
\lim\limits_{\rho(y,y_0)\to 0}\gamma(y)\le \gamma(y_0)
$$
(respectively,
$$
\lim\limits_{\rho(y,y_0)\to 0}\gamma(y)\ge \gamma(y_0)).
$$
The function $\gamma$ is called upper (respectively, lower)
semi-continuous on $A\subseteq Y$ if it is upper (respectively,
lower) semi-continuous at every point $y_0\in A$.
\end{definition}

\begin{definition}\label{defAB1}
Let $M\subseteq X$ be a compact subset of $X$ such that $h(M)=Y$.
Denote by $M_{y}:=h^{-1}(y)=\{x\in M|\ h(x)=y\}$,
$I_{y}:=pr_{1}(M_{y})\subseteq W$,
\begin{equation}\label{eqIS0.01}
\alpha_{M}(y):=\inf\{u\in I_{y}\}\nonumber
\end{equation}
and
\begin{equation}\label{eq002}
\beta_{M}(y):=\sup\{u\in I_{y}\}.\nonumber
\end{equation}
\end{definition}

\begin{lemma}\label{lIS1}\cite[Ch.V]{Che_2024B} Let $M\subseteq X$ be a compact subset of
$X$ such that $h(M)=Y$. Then the following statements hold:
\begin{enumerate}
\item for every $y\in Y$ the set $M_{y}:=h^{-1}(y)\bigcap X$ is a
nonempty compact subset from $X_{y}:=h^{-1}(y)$; \item
$(\alpha(y),y),(\beta(y),y)\in M_{y}=I_{y}\times \{y\}$;

\item the map
$$
\alpha :Y\to \mathbb R,\ y\to \alpha(y)
$$
(respectively,
$$
\beta :Y\to \mathbb R,\ y\to \beta(y))
$$
is lower semi-continuous (respectively, upper semi-continuous).
\end{enumerate}
\end{lemma}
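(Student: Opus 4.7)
\textbf{Proof plan for Lemma \ref{lIS1}.}

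The plan is to treat the three assertions in order, exploiting only the compactness of $M$, the continuity of $h = pr_2$, and the product structure $X = \mathbb R \times Y$.

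For (i), I would start from the assumption $h(M)=Y$, which guarantees $M_y\neq\emptyset$ for every $y\in Y$. Since $h$ is continuous and $\{y\}$ is closed in $Y$, the preimage $h^{-1}(y)$ is closed in $X$; intersecting with the compact set $M$ yields a closed subset of a compact set, hence a compact subset of $X_y$.

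For (ii), I would observe that $h = pr_{2}$ together with the product description $X=\mathbb R\times Y$ gives the equality $M_{y}=I_{y}\times\{y\}$, where $I_{y}=pr_{1}(M_{y})$. Since $pr_{1}$ is continuous, $I_{y}$ is a compact (hence closed and bounded) subset of $\mathbb R$. Therefore $\alpha_M(y)=\inf I_y$ and $\beta_M(y)=\sup I_y$ are attained in $I_{y}$, which translates to $(\alpha_M(y),y),(\beta_M(y),y)\in M_{y}$.

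For (iii), I would argue sequentially. Suppose $y_{n}\to y_{0}$ in $Y$; by (ii) we may pick $u_{n}:=\alpha_M(y_{n})\in I_{y_{n}}$, so the points $(u_{n},y_{n})$ all lie in the compact set $M$. Extract a subsequence $(u_{n_{k}},y_{n_{k}})\to (u^{*},y_{0})\in M$; then $u^{*}\in I_{y_{0}}$, so $u^{*}\ge \alpha_M(y_{0})$. Applying this to any subsequence that realises $\liminf_{n\to\infty}\alpha_M(y_{n})$ yields
\begin{equation*}
\liminf_{y\to y_{0}}\alpha_M(y)\ge \alpha_M(y_{0}),
\end{equation*}
which is the required lower semi-continuity of $\alpha_M$. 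The argument for upper semi-continuity of $\beta_M$ is entirely symmetric, working with $u_{n}:=\beta_M(y_{n})$ and the inequality $u^{*}\le \beta_M(y_{0})$.

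There is no real obstacle here; the only point requiring care is the interpretation of the $\lim$ in Definition \ref{defIS011} as $\limsup$ (respectively $\liminf$), which is the standard convention and is precisely what the subsequence extraction above establishes. Everything else is a routine exploitation of compactness of $M$ and continuity of the projections.
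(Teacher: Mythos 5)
Your argument is correct: (i)--(ii) follow exactly as you say from continuity of $h=pr_{2}$, compactness of $M$, and attainment of $\inf/\sup$ on the compact set $I_{y}$, and your sequential compactness argument in (iii) (limit points of $(\alpha_M(y_n),y_n)\in M$ lie in $M_{y_0}$, hence are $\ge\alpha_M(y_0)$, and symmetrically for $\beta_M$) is the standard proof, with the $\liminf$/$\limsup$ reading of Definition \ref{defIS011} correctly identified. The paper itself gives no proof of Lemma \ref{lIS1} (it is quoted from \cite[Ch.V]{Che_2024B}), so there is nothing to compare against beyond noting that your route is the expected one.
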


\begin{remark}\label{remIS1} In what follows, in the notation $\alpha_{M}$ (respectively, $\beta_{M}$),
we will omit the index $M$ if this does not lead to a
misunderstanding.
\end{remark}

\begin{lemma}\label{lIS2}\cite[Ch.V]{Che_2024B} Let $M$ be a
compact subset of $X$ such that $h(M)=Y$,
$\widehat{M}_{y}:=[\alpha(y),\beta(y)]\times \{y\}$ and
$\widehat{M} =\bigcup\{ \widehat{M}_{y}|\ y\in Y\}$. Then the
following statement hold:
\begin{enumerate}
\item $M_{y}\subseteq \widehat{M}_{y}$ for every $y\in Y$; \item
$M\subseteq \widehat{M}$; \item the set $\widehat{M}$ is a compact
subset of $X$; \item $\alpha_{\widehat{M}}=\alpha_{M}$ and
$\beta_{\widehat{M}}=\beta_{M}$.
\end{enumerate}
\end{lemma}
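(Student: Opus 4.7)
My plan is to dispatch parts (i), (ii), and (iv) as essentially definitional, and to concentrate the real work on part (iii), the compactness of $\widehat M$, which I intend to prove by a sequential-compactness argument that exploits the semi-continuity of $\alpha$ and $\beta$ established in Lemma~\ref{lIS1}.

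For (i) I would simply note that every $u\in I_{y}$ lies between $\inf I_{y}=\alpha(y)$ and $\sup I_{y}=\beta(y)$, so $M_{y}=I_{y}\times\{y\}\subseteq[\alpha(y),\beta(y)]\times\{y\}=\widehat M_{y}$. Part (ii) then follows by taking the union over $y\in Y$ of both sides and using $M=\bigcup_{y\in Y}M_{y}$. Part (iv), once (iii) is in hand, reduces to the observation that the fibre $\widehat M_{y}$ projects onto the closed interval $[\alpha_{M}(y),\beta_{M}(y)]$, so the infimum and supremum of this interval return $\alpha_{M}(y)$ and $\beta_{M}(y)$ respectively, yielding $\alpha_{\widehat M}=\alpha_{M}$ and $\beta_{\widehat M}=\beta_{M}$.

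The heart of the argument is (iii). Since $M$ is compact and $h$ is continuous, $Y=h(M)$ is a compact metric space, and $pr_{1}(M)$ is a bounded compact subset of $\mathbb R$ that contains both $\alpha(Y)$ and $\beta(Y)$. Given any sequence $\{(u_{n},y_{n})\}\subseteq\widehat M$, my plan is first to extract a subsequence with $y_{n}\to y\in Y$; since $\alpha(y_{n})\le u_{n}\le\beta(y_{n})$ and both $\alpha,\beta$ take values in a bounded set, the sequence $\{u_{n}\}$ is bounded and a further subsequence yields $u_{n}\to u\in\mathbb R$. The closure step then invokes Lemma~\ref{lIS1}: by lower semi-continuity of $\alpha$,
\[\alpha(y)\le\liminf_{n\to\infty}\alpha(y_{n})\le\liminf_{n\to\infty}u_{n}=u,\]
and by upper semi-continuity of $\beta$,
\[u=\limsup_{n\to\infty}u_{n}\le\limsup_{n\to\infty}\beta(y_{n})\le\beta(y),\]
so $(u,y)\in\widehat M_{y}\subseteq\widehat M$, which gives sequential compactness and hence compactness.

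The main (mild) obstacle is ensuring that the candidate limit $(u,y)$ actually belongs to $\widehat M$, which is exactly where the semi-continuity of $\alpha$ and $\beta$ is indispensable; I will have to track the directions of the inequalities carefully so that the lower bound $\alpha$ is paired with lower semi-continuity and the upper bound $\beta$ with upper semi-continuity. Everything else is routine bookkeeping in $\mathbb R\times Y$.
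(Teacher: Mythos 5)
Your proposal is correct and complete: parts (i), (ii), (iv) are indeed essentially definitional (for (iv) one only needs the nonemptiness of $M_{y}$ from Lemma~\ref{lIS1}, so that $[\alpha_{M}(y),\beta_{M}(y)]$ is a nonempty closed interval whose infimum and supremum are its endpoints), and your sequential argument for (iii) pairs the bounds with the right semicontinuities: boundedness of $u_{n}$ via $\alpha(y_{n}),\beta(y_{n})\in pr_{1}(M)$, then $\alpha(y)\le u\le\beta(y)$ in the limit by lower semicontinuity of $\alpha$ and upper semicontinuity of $\beta$. The paper itself offers no proof of this lemma --- it is quoted from \cite[Ch.V]{Che_2024B} --- so there is nothing to compare against, but your closure argument built on Lemma~\ref{lIS1} is exactly the natural route to the statement.
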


\begin{definition}\label{def2.4} The cocycle $\varphi$ is called
$V-$monotone \cite[Ch.XII]{Che_2015} if there exists a continuous
function $\mathcal V : \mathbb R \times \mathbb R \times Y \to
\mathbb R _{+}$ with the following properties:
\begin{enumerate}
\item $ \mathcal V (u_1,u_2,\omega)\ge 0 $ for all $ y \in Y $ and
$ u_1,u_2 \in \mathbb R;$ \item $ \mathcal V (u_1,u_2,y)=0 $ if
and only if $u_1=u_2;$ \item $ \mathcal V (\varphi
(t,u_1,y),\varphi (t,u_2,y), \sigma(t,y)) \le \mathcal V
(u_1,u_2,y) $ for all $u_1,u_2 \in \mathbb R, y \in Y $ and $ t\in
\mathbb T _{+}.$
\end{enumerate}
\end{definition}

\begin{theorem}\label{thV1} \cite[Ch.XII]{Che_2015}
Let $Y$ be a compact metric space, $ \langle \mathbb R ,
\varphi,(Y,\mathbb T,\sigma)\rangle $ be a $V$-monotone cocycle
and $M\subseteq X:=\mathbb R\times Y$ be a compact positively
invariant subset of the skew-product dynamical system $(X,\mathbb
T_{+},\pi)$. Then the cocycle $\varphi$ is positively uniformly
stable on $M$, i.e., for arbitrary positive number $\varepsilon$
there exists a positive number $\delta =\delta(\varepsilon,M)>0$
such that $|u_1-u_2|<\delta$ ($(u_1,y),(u_2,y)\in M$) implies
$|\varphi(t,u_1,y)-\varphi(t,u_2,y)|<\varepsilon$ for all
$(t,y)\in \mathbb T_{+}\times Y$.
\end{theorem}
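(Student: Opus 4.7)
The plan is to exploit the Lyapunov-like function $\mathcal V$ in the standard way one proves uniform stability from the existence of a strict Lyapunov function on a compact invariant set, adapted to the cocycle setting with base $Y$.

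First I would set up the ``fiberwise'' compact configuration space. Let $\Delta(M) := \{(u_1,u_2,y) : (u_1,y),(u_2,y)\in M\}$ be the subset of $\mathbb R\times\mathbb R\times Y$ consisting of pairs of points of $M$ sharing the same $Y$-coordinate. Since $M$ is compact and the map $(x_1,x_2)\mapsto(h(x_1),h(x_2))$ is continuous, $\Delta(M)$ is a closed (hence compact) subset of the compact set $pr_1(M)\times pr_1(M)\times Y$. The function $\mathcal V$ is continuous and strictly positive on $\Delta(M)\setminus\{u_1=u_2\}$.

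Next, for the given $\varepsilon>0$ I would apply compactness twice to produce the desired $\delta$. Step one: the set
\[
K_\varepsilon := \{(u_1,u_2,y)\in\Delta(M) : |u_1-u_2|\ge\varepsilon\}
\]
is compact, and on it $\mathcal V>0$ by property (ii); hence there exists $\eta=\eta(\varepsilon,M)>0$ with $\mathcal V(u_1,u_2,y)\ge\eta$ for all $(u_1,u_2,y)\in K_\varepsilon$. Step two: define
\[
\omega(\delta) := \sup\{\mathcal V(u_1,u_2,y) : (u_1,u_2,y)\in\Delta(M),\ |u_1-u_2|\le\delta\}.
\]
Since $\Delta(M)$ is compact and $\mathcal V$ is continuous with $\mathcal V(u,u,y)=0$, a standard uniform-continuity / compactness argument gives $\omega(\delta)\to 0$ as $\delta\to 0^+$. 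Therefore one can choose $\delta=\delta(\varepsilon,M)>0$ so small that $\omega(\delta)<\eta$.

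Finally I would close the argument using positive invariance of $M$ and the monotonicity (decreasing) property (iii) of $\mathcal V$. Take any $(u_1,y),(u_2,y)\in M$ with $|u_1-u_2|<\delta$. Then $\mathcal V(u_1,u_2,y)\le\omega(\delta)<\eta$, and for every $t\in\mathbb T_+$
\[
\mathcal V\bigl(\varphi(t,u_1,y),\varphi(t,u_2,y),\sigma(t,y)\bigr)\le \mathcal V(u_1,u_2,y)<\eta.
\]
Because $M$ is positively invariant under $\pi=(\varphi,\sigma)$, both points $(\varphi(t,u_i,y),\sigma(t,y))$ lie in $M$, so the triple on the left is in $\Delta(M)$. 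If it were that $|\varphi(t,u_1,y)-\varphi(t,u_2,y)|\ge\varepsilon$, the triple would lie in $K_\varepsilon$ and the left-hand side would be $\ge\eta$, a contradiction. Hence $|\varphi(t,u_1,y)-\varphi(t,u_2,y)|<\varepsilon$ for every $t\in\mathbb T_+$ and every admissible $y$, which is the required positive uniform stability.

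The only nontrivial step is verifying $\omega(\delta)\to 0$; this is the obstacle, but it is a routine consequence of uniform continuity of $\mathcal V$ on the compact set $\Delta(M)$ combined with the vanishing of $\mathcal V$ on the diagonal $u_1=u_2$. Everything else is bookkeeping with properties (i)--(iii) of $\mathcal V$ and positive invariance of $M$.
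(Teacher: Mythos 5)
Your argument is correct: the two compactness steps (a positive lower bound $\eta$ for $\mathcal V$ on the set where $|u_1-u_2|\ge\varepsilon$, and $\omega(\delta)\to 0$ via continuity of $\mathcal V$ and its vanishing exactly on the diagonal), combined with property (iii) and the positive invariance of $M$, give precisely the stated uniform stability; note also that the present paper does not prove Theorem \ref{thV1} but quotes it from \cite[Ch.XII]{Che_2015}, and your proof is the standard Lyapunov--compactness argument used there.
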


\begin{definition}\label{defD1} A compact invariant set $M\subseteq
X$ of two sided nonautonomous dynamical system $\langle (X,\mathbb
S,\pi),(Y,\mathbb S,\sigma),h\rangle$ is called distal in the
negative direction (negatively distal) if
\begin{equation}\label{eqD_1}
\inf\limits_{t\le 0}\rho(\pi(t,x_1),\pi(t,x_2))>0\nonumber
\end{equation}
for all $x_1,x_2\in M$ with $x_1\not= x_2$ and $h(x_1)=h(x_2)$.
\end{definition}

\begin{lemma}\label{lV1} \cite[Ch.XII]{Che_2015} Let $M\subseteq X$
be a compact invariant set of $(X,\mathbb S_{+},\pi)$. If the
nonautonomous dynamical system  is uniformly stable, then $\langle
(X,\mathbb S_{+},\pi),(Y,$ $\mathbb S,$ $\sigma),$ $h\rangle$ is
distal in the negative direction.
\end{lemma}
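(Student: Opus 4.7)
The plan is to proceed by contradiction. Suppose $M$ fails to be distal in the negative direction: there exist $x_1,x_2\in M$ with $x_1\neq x_2$, $h(x_1)=h(x_2)$, and
$$
\inf_{t\le 0}\rho(\pi(t,x_1),\pi(t,x_2))=0.
$$
Since $M$ is compact and invariant under $\pi$, the motions through $x_1$ and $x_2$ admit extensions to negative times that remain in $M$, so one can extract a sequence $\{t_n\}\subseteq \mathbb S_{-}$ with $\rho(\pi(t_n,x_1),\pi(t_n,x_2))\to 0$ as $n\to\infty$.

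The key step is then a direct appeal to uniform stability combined with the cocycle identity run forward for time $-t_n$. Fix an arbitrary $\varepsilon>0$ and let $\delta=\delta(\varepsilon,M)>0$ be the modulus of uniform stability on $M$. For $n$ large enough, $\rho(\pi(t_n,x_1),\pi(t_n,x_2))<\delta$, while
$$
h(\pi(t_n,x_1))=\sigma(t_n,h(x_1))=\sigma(t_n,h(x_2))=h(\pi(t_n,x_2)),
$$
so uniform stability applies to the pair $\pi(t_n,x_1)$, $\pi(t_n,x_2)$ and gives
$$
\rho\bigl(\pi(t,\pi(t_n,x_1)),\,\pi(t,\pi(t_n,x_2))\bigr)<\varepsilon \quad \text{for all } t\ge 0.
$$
Choosing $t:=-t_n\ge 0$ and using $\pi(-t_n,\pi(t_n,x_i))=\pi(0,x_i)=x_i$, we conclude $\rho(x_1,x_2)<\varepsilon$. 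Since $\varepsilon>0$ was arbitrary, $x_1=x_2$, contradicting our choice.

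The only delicate point is the backwards extension: the lemma is stated for the one-sided system $(X,\mathbb S_{+},\pi)$, yet the definition of negative distality requires the quantity $\pi(t,x_i)$ to be meaningful for $t\le 0$ and the semigroup identity $\pi(-t_n,\pi(t_n,x_i))=x_i$ to hold. Both are furnished by the compactness and two-sided invariance of $M$ (in the sense $\pi(t,M)=M$ for every $t\ge 0$), which allow one to continue orbits of points in $M$ into the past while staying inside $M$ and preserve the cocycle identity there. Once this is granted, the lemma collapses to the short uniform-stability argument above; I expect this extension issue, rather than the analytic content, to be the only item requiring care in a fully written-out proof.
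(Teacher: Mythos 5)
Your argument is correct, and it is the standard proof of this fact (the paper itself only cites it from \cite[Ch.XII]{Che_2015} without reproducing a proof): on a common fiber the skew-product distance reduces to the fiber distance, so uniform stability applied at the backward points $\pi(t_n,x_i)$ and run forward for time $-t_n$ forces $\rho(x_1,x_2)<\varepsilon$ for every $\varepsilon$, a contradiction. The only point to state more precisely is the one you flag: invariance of $M$ gives existence of backward continuations, but the well-defined two-sided motion $\pi(t,\cdot)$ on $M$ for $t\le 0$ (with the identity $\pi(-t_n,\pi(t_n,x_i))=x_i$) is exactly what Theorem \ref{thV2} supplies under the same uniform stability hypothesis, so it is cleaner to invoke that theorem than compactness and invariance alone.
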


\begin{theorem}\label{thV2} \cite[Ch.IX]{Che_2015} Let $ \langle \mathbb R ,
\varphi,(Y,\mathbb S,\sigma)\rangle $ be a cocycle with the
following properties:
\begin{enumerate}
\item the skew-product dynamical system $(X,\mathbb S_{+},\pi)$,
generated by the cocycle $\varphi$, admits a compact invariant set
$M\subset X$; \item the cocycle $\varphi$ is positively uniformly
stable on $M$.
\end{enumerate}

Then the skew-product dynamical system $(X,\mathbb S_{+},\pi)$
generates on $M$ a two-sided dynamical system $(M,\mathbb S,\pi)$.
\end{theorem}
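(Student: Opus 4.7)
The plan is to extend the action of the semigroup $\mathbb{S}_{+}$ on $M$ to a continuous action of the group $\mathbb{S}$ by proving that each time-$t$ map $\pi^{t} := \pi(t,\cdot)\colon M\to M$ is a homeomorphism for every $t\ge 0$, and then defining the negative-time maps as the inverses.

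First I would establish that $\pi^{t}|_{M}$ is a continuous bijection for every $t\ge 0$. Surjectivity is immediate from the invariance of $M$, that is, $\pi^{t}(M)=M$. For injectivity, suppose $\pi(t,x_{1})=\pi(t,x_{2})$ with $x_{1},x_{2}\in M$. Applying $h$ and using that $(Y,\mathbb{S},\sigma)$ is two-sided (so $\sigma^{t}\colon Y\to Y$ is a homeomorphism), I obtain $h(x_{1})=h(x_{2})=:y$, which reduces the equality to a fiberwise identity $\varphi(t,u_{1},y)=\varphi(t,u_{2},y)$ where $x_{i}=(u_{i},y)$. This is exactly the type of collapse ruled out by Lemma \ref{lV1}: since the cocycle is positively uniformly stable on $M$, the induced nonautonomous dynamical system is distal in the negative direction, forcing $x_{1}=x_{2}$. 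Because $M$ is a compact metric space and $\pi^{t}|_{M}$ is a continuous bijection from $M$ onto itself, it is automatically a homeomorphism.

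Next I would set $\pi(-t,x):=(\pi^{t}|_{M})^{-1}(x)$ for $t>0$ and $x\in M$, and verify the group law $\pi(s+t,x)=\pi(s,\pi(t,x))$ for all $s,t\in\mathbb{S}$. When $s$ and $t$ have the same sign the identity follows directly from the semi-flow property (or by inverting it); for mixed signs the identity reduces, after applying $\pi^{|s|}$ or $\pi^{|t|}$ to both sides, to a tautology from surjectivity and injectivity of the positive-time maps. The identity $\pi(0,x)=x$ is preserved by construction, and fiber covariance $h(\pi(t,x))=\sigma(t,h(x))$ extends to negative $t$ because $\sigma$ is already two-sided.

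The main obstacle is verifying joint continuity of the extended map $\mathbb{S}\times M\to M$, particularly at points $(t_{0},x_{0})$ with $t_{0}<0$. My plan is to argue that for each compact interval $[0,T]\subset\mathbb{S}_{+}$ the family $\{\pi^{t}:t\in[0,T]\}$ is uniformly equicontinuous on $M$, which follows from the positive uniform stability hypothesis together with the uniform continuity of $\pi$ on the compact set $[0,T]\times M$. Because each member of this family is a self-homeomorphism of the compact metric space $M$, a standard Arzel\`a--Ascoli-type argument shows that the family of inverses $\{(\pi^{t}|_{M})^{-1}:t\in[0,T]\}$ is also equicontinuous: any uniform-limit point of $\pi^{t_{n}}|_{M}$ is a homeomorphism, whose inverse is the corresponding uniform limit of the inverse maps. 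From equicontinuity of both families, joint continuity of $(t,x)\mapsto\pi(t,x)$ at $(t_{0},x_{0})$ follows by a routine triangle-inequality estimate combining closeness of $t$ to $t_{0}$ with continuity of $\pi^{t_{0}}$ at $x_{0}$. This completes the verification that $(M,\mathbb{S},\pi)$ is a two-sided dynamical system.
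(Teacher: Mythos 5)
The paper itself gives no proof of Theorem \ref{thV2} (it is quoted from \cite[Ch.IX]{Che_2015}), so your argument has to stand on its own. Your overall architecture --- show each $\pi^{t}|_{M}$ is a homeomorphism, define negative times by inversion, then check the group law and joint continuity --- is the right one, but the step that carries the entire content of the theorem, injectivity of $\pi^{t}|_{M}$, is not actually established. The appeal to Lemma \ref{lV1} is circular in the present framework: ``distal in the negative direction'' (Definition \ref{defD1}) is a condition on $\rho(\pi(t,x_1),\pi(t,x_2))$ for $t\le 0$ on $M$, i.e.\ it presupposes exactly the two-sided system on $M$ that Theorem \ref{thV2} is meant to construct. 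Moreover, even if one restates negative distality in terms of entire trajectories lying in $M$, it does not yield injectivity: distality compares two trajectories that are \emph{distinct at time zero} and asserts they stay apart for $t\le 0$; it says nothing against two distinct backward continuations that merge at a later time, which is precisely the forward collision $\pi(t_0,x_1)=\pi(t_0,x_2)$, $x_1\ne x_2$, you need to exclude. (Indeed, positive uniform stability gives that weak distality statement almost for free, while backward uniqueness is the nontrivial part.) What is missing is an argument that genuinely combines positive uniform stability with the compactness and invariance of $M$ to force backward uniqueness; one standard route is an approximate-identity (Ellis-semigroup/Arzel\`a--Ascoli) argument: from equicontinuity of the forward maps and their surjectivity on $M$ one extracts times $t_k\to+\infty$ along which $\pi^{t_k}|_{M}$ returns uniformly close to the identity, and then $\pi^{t_0}x_1=\pi^{t_0}x_2$ forces $x_1=\lim\pi^{t_k}x_1=\lim\pi^{t_k}x_2=x_2$; in the fibered situation the stability hypothesis is only fiberwise, so this must be combined with the group structure and recurrence of the base, and that is where the real work of the cited proof lies. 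Without something of this kind, your proof assumes what it must prove.

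Two smaller remarks. The equicontinuity of $\{\pi^{t}|_{M}:t\in[0,T]\}$ already follows from uniform continuity of $\pi$ on the compact set $[0,T]\times M$ and does not need the stability hypothesis; what stability gives is equicontinuity uniformly over all $t\ge 0$, and only fiberwise. Also, the claim that ``any uniform-limit point of $\pi^{t_n}|_{M}$ is a homeomorphism'' is false for general sequences of homeomorphisms of a compact metric space (uniform limits may fail to be injective); it is salvageable here only because the limit can be identified with $\pi^{t_0}|_{M}$, whose injectivity is again the very point left unproved. Once injectivity is secured, your verification of the group law, of the relation $h(\pi(t,x))=\sigma(t,h(x))$ for $t<0$, and of joint continuity of the extended flow does go through.
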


\begin{theorem}\label{thV3} \cite[Ch.XII,p.370]{Che_2015} Let $ \langle \mathbb R ,
\varphi,(Y,\mathbb S,\sigma)\rangle $ be a cocycle, $M\subseteq
X:=\mathbb R\times Y$ be a compact positively invariant subset of
the skew-product dynamical system $(X,\mathbb S_{+},\pi)$ and the
following conditions are fulfilled:
\begin{enumerate}
\item $Y$ is compact and minimal (this means that every trajectory
from $Y$ is dense in $Y$, i.e., $Y=H(y)$ for every $y\in Y$, where
$H(y):=\overline{\{\sigma(t,y)|\ t\in \mathbb T\}}$); \item
$|\varphi(t,u_1,y)-\varphi(t,u_2,y)|\le |u_1-u_2|$ for all
$(u_1,y),(u_2,y)\in M$ and $t\in \mathbb S_{+}$; \item the set
$M_{y}:=\{x\in M:\ h(x)=y\}$ is convex for every $y\in Y$.
\end{enumerate}

Then there exists a continuous mapping $\gamma =(\nu,Id_{Y}):Y\to
M$ with the properties: $pr_{2}(\gamma(y))=y$ and
$\gamma(\sigma(t,y))=\pi(t,\gamma(y))$ for all $(t,y)\in \mathbb
S\times Y$ (or equivalently: $\nu :Y\to pr_{1}(M)$ and
$\nu(\varphi(t,u,y))=\varphi(t,\nu(y),y)$ for every $(t,y)\in
\mathbb S\times Y$).
\end{theorem}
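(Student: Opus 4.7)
The plan is to exhibit the desired invariant section as the fiberwise midpoint of $M$. First, hypothesis~(2) is precisely positive uniform stability of $\varphi$ on $M$ (take $\delta=\varepsilon$), so Theorem~\ref{thV2} yields a two-sided dynamical system $(M,\mathbb S,\pi)$; in particular each $\pi(t,\cdot)$ is a homeomorphism of $M$. By the convexity hypothesis~(3) together with Lemma~\ref{lIS1}, every fiber $M_y$ has the form $[\alpha(y),\beta(y)]\times\{y\}$, with $\alpha$ lower semi-continuous and $\beta$ upper semi-continuous. Since $\pi(t,\cdot)$ is bijective and respects fibers, the induced map $\varphi(t,\cdot,y)\colon I_y\to I_{\sigma(t,y)}$ is a continuous bijection between compact intervals, hence a strictly monotone homeomorphism sending endpoints to endpoints.

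The heart of the argument is to show the length function $d(y):=\beta(y)-\alpha(y)$ is constant on $Y$. Applying hypothesis~(2) to the endpoints of $I_y$ yields $d(\sigma(t,y))\le d(y)$ for $t\ge 0$; passing to the inverse $\pi(-s,\cdot)$ from the two-sided extension provides the reverse inequality $d(\sigma(-s,y))\ge d(y)$ for $s\ge 0$. Since $\beta-\alpha$ is upper semi-continuous on the compact set $Y$, it attains its maximum $d_{\max}$ at some $y^{*}\in Y$, and the backward monotonicity forces $d\equiv d_{\max}$ on the entire backward orbit of $y^{*}$. By compactness and minimality of $(Y,\mathbb S,\sigma)$ this backward orbit is dense in $Y$ (every point of a compact minimal flow is uniformly recurrent in both time directions), so for arbitrary $y\in Y$ we pick $s_n\to+\infty$ with $\sigma(-s_n,y^{*})\to y$ and apply upper semi-continuity to obtain $d(y)\ge\limsup_n d(\sigma(-s_n,y^{*}))=d_{\max}$; hence $d\equiv d_{\max}$.

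Once $d$ is constant we have $\beta=\alpha+d_{\max}$, so $\alpha$ is simultaneously lower and upper semi-continuous, therefore continuous; the same is then true of $\nu(y):=\tfrac12(\alpha(y)+\beta(y))=\alpha(y)+d_{\max}/2$. Because $\varphi(t,\cdot,y)$ is $1$-Lipschitz on $I_y$ and attains equality on the endpoints (which map, as noted, to the endpoints of $I_{\sigma(t,y)}$), it must be affine of slope $\pm 1$ on $I_y$. A direct case check (order-preserving: $\varphi(t,\alpha(y),y)=\alpha(\sigma(t,y))$; order-reversing: $\varphi(t,\alpha(y),y)=\beta(\sigma(t,y))$) then gives $\varphi(t,\nu(y),y)=\nu(\sigma(t,y))$ in both cases, so $\gamma(y):=(\nu(y),y)$ is the desired continuous $\pi$-invariant section. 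The main obstacle is the second step: one needs the two-sided extension in order to produce the reverse monotonicity of $d$, and then minimality of $Y$ combined with the upper semi-continuity of $d$ to propagate the value $d_{\max}$, achieved at a single maximizer, to every point of $Y$.
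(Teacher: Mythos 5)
Your construction founders on the distinction between positive invariance and invariance. Theorem \ref{thV3} assumes only that $M$ is \emph{positively} invariant, whereas Theorem \ref{thV2}, which you invoke to obtain the two-sided system $(M,\mathbb S,\pi)$ and the claim that each $\pi(t,\cdot)$ is a homeomorphism of $M$, requires a compact \emph{invariant} set. Under positive invariance alone, $\varphi(t,\cdot,y)$ maps $I_{y}$ into, but generally not onto, $I_{\sigma(t,y)}$; endpoints need not go to endpoints, the inequality $d(\sigma(t,y))\le d(y)$ for the fibre length fails, and in fact $d$ need not be constant, nor is the fibrewise midpoint of $M$ an invariant section. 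Concretely, take $Y=\mathbb R/\mathbb Z$ with $\sigma(t,y)=y+t$ (compact and minimal), $\varphi(t,u,y)=e^{-t}u$ (so condition (ii) holds on all of $X$), and $M=\{(u,y):\ 0\le u\le r(y)\}$ with $r(y)=\exp(\varepsilon \sin 2\pi y)$ and $0<\varepsilon<1/(2\pi)$. Since $\frac{d}{dt}\left(\log r(y+t)+t\right)\ge 1-2\pi\varepsilon>0$, one has $e^{-t}r(y)\le r(y+t)$ for $t\ge 0$, so $M$ is compact, positively invariant, with convex fibres; yet the fibre length $r(y)$ is not constant, and the midpoint $\nu(y)=r(y)/2$ satisfies $\varphi(t,\nu(y),y)=e^{-t}r(y)/2\neq r(y+t)/2=\nu(\sigma(t,y))$ in general (the invariant section here is $\nu\equiv 0$). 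So the object you build is not the one the theorem asserts, and the second step of your plan (monotonicity of $d$ along orbits) is exactly where the argument breaks.

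The scheme can be repaired, but it needs a preliminary reduction you skipped: first replace $M$ by a compact \emph{invariant} subset with nonempty convex fibres over every $y\in Y$, for instance the maximal invariant subset $M_{\infty}:=\bigcap_{t\ge 0}\pi(t,M)$ (nonempty and compact as a nested intersection; invariant; its fibres are nested intersections of intervals, hence intervals, since $\sigma(t,\cdot)$ is injective; and $h(M_{\infty})=Y$ because $h(M_{\infty})$ is a nonempty compact invariant subset of the minimal $Y$), or the interval hull $\widehat A$ of an invariant set as in Lemma \ref{lIS2} --- this is precisely the kind of set to which the argument is applied later in the paper (Theorem \ref{thV5}, with $\widehat{\omega_{x_0}}$). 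On such an invariant set Theorem \ref{thV2} does apply, the fibre maps become isometries of intervals onto intervals, and your subsequent reasoning (monotone fibre length, upper semi-continuity of $\beta-\alpha$, density of the backward orbit of a maximizer by minimality, continuity of $\alpha$, $\beta$ and of the midpoint, and the slope $\pm1$ case check) is sound and yields the invariant continuous section claimed in Theorem \ref{thV3}; note the paper itself gives no proof of this statement but quotes it from \cite[Ch.XII]{Che_2015}.
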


Let $\mathfrak N_{y}:=\{\{t_n\}\subset \mathbb T|$ such that
$\sigma(t_n,y)\to y$ as $n\to \infty$ and $\mathfrak N_{y}^{\pm
\infty}:=\{\{t_n\}\in \mathfrak N_{y}|$ such that $t_n\to \pm
\infty$ as $n\to \infty\}$.

\begin{lemma}\label{lV2.1}\cite[ChIII]{Che_2020} Suppose that the following conditions
are fulfilled:
\begin{enumerate}
\item $y\in Y$  is a two-sided Poisson stable point, i.e., $x\in
\omega_{x}=\alpha_{x}$; \item $\langle (X,\mathbb S,\pi),$
$(Y,\mathbb S,$ $\sigma),$ $h\rangle $ is a two-sided
nonautonomous dynamical system; \item $X$ is a compact space;
\item
\begin{equation}\label{eqP1**}
\inf \limits _{t\le 0}\rho (\pi(t,x_{1}),\pi(t,x_{2}))>0\nonumber
\end{equation}
for all $x_{1},x_{2} \in X_{y}\quad (x_{1}\not= x_{2})$.
\end{enumerate}

Then for every pair of points $x_1,x_2\in X_{y}$ with $x_1\not=
x_2$ there are the sequences $\{t^{-}_{n}\}\in\mathfrak
N^{-\infty}_{y}$ and $\{t^{+}_{n}\}\in\mathfrak N^{+\infty}_{y}$
such that
\begin{equation}\label{eqP_1.1}
\lim\limits_{n\to \infty}\pi(t_{n}^{\pm},x_{i})=x_{i}\ (i=1,2)
.\nonumber
\end{equation}
\end{lemma}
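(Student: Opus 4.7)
The plan is to construct the sequences $\{t_n^\pm\}$ separately, each via an Ellis--semigroup argument on the compact fiber $X_y$: the negative--distality hypothesis supplies the key injectivity that upgrades an Ellis idempotent on $X_y$ to the identity map, which then yields the desired recurrence. For the backward sequence, let $\mathcal H^-$ denote the set of maps $T\colon X_y\to X_y$ arising as pointwise limits of $\pi(r_\alpha,\cdot)|_{X_y}$ along nets $r_\alpha\to-\infty$ with $\sigma(r_\alpha,y)\to y$. By the negative Poisson stability of $y$, Tychonoff compactness of $X_y^{X_y}$, and continuity of $h$, the set $\mathcal H^-$ is a nonempty compact right--topological semigroup under composition, and Ellis's theorem produces an idempotent $e\in\mathcal H^-$. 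For any $T\in\mathcal H^-$ and any distinct $z_1,z_2\in X_y$, hypothesis~(4) applied to $(z_1,z_2)$ gives $\inf_{t\le 0}\rho(\pi(t,z_1),\pi(t,z_2))\ge\delta(z_1,z_2)>0$; since $r_\alpha\le 0$ eventually, this lower bound passes to the limit, yielding $\rho(T(z_1),T(z_2))\ge\delta>0$. Hence every $T\in\mathcal H^-$ is injective, and injectivity together with $e\circ e=e$ forces $e=\mathrm{id}_{X_y}$. Realizing this identity at the two points $x_1,x_2$ (pointwise convergence on a finite set suffices to extract a sequence from the net in the metric space $X$) yields $t_n^-\to-\infty$ with $\sigma(t_n^-,y)\to y$ and $\pi(t_n^-,x_i)\to x_i$ for $i=1,2$.

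For the forward sequence, build the analogous semigroup $\mathcal H^+$ using nets $s_\alpha\to+\infty$ with $\sigma(s_\alpha,y)\to y$. Hypothesis~(4) bounds only backward distances, so injectivity of $T\in\mathcal H^+$ does not follow directly. Suppose instead that $\pi(s_\alpha,z_i)\to w$ for both $i=1,2$ with $z_1\ne z_2$; the two--sided Poisson stability of $y$ supplies a companion net $r_\beta\to-\infty$ with $\sigma(r_\beta,y)\to y$, and one forms the diagonal $u_{\alpha,\beta}:=s_\alpha+r_\beta$. Balancing the indices so that $|r_\beta|$ grows fast enough to force $u_{\alpha,\beta}\to-\infty$ (and so into the range where (4) applies), while the convergence $\pi(s_\alpha,z_i)\to w$ outruns the modulus of continuity of the fixed continuous map $\pi(r_\beta,\cdot)$ on the compact $X$, one produces a subnet along which $u_{\alpha,\beta}\le 0$, $\sigma(u_{\alpha,\beta},y)\to y$, and $\rho(\pi(u_{\alpha,\beta},z_1),\pi(u_{\alpha,\beta},z_2))\to 0$; this contradicts the negative--distality bound. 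Hence $T$ is injective, the argument of the previous paragraph yields $e=\mathrm{id}\in\mathcal H^+$, and $\{t_n^+\}$ is extracted as before.

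The main obstacle is the diagonal balancing in the forward case: the maps $\pi(r_\beta,\cdot)$ on the compact space $X$ are continuous for each fixed $\beta$ but can become arbitrarily expansive as $\beta$ runs through the backward returns. Choosing $\beta_n$ modest enough that the inherited collapse under $\pi(r_{\beta_n},\cdot)$ survives the limit, yet large enough that $u_n:=s_{\alpha_n}+r_{\beta_n}\le 0$ and $u_n\to-\infty$, requires a careful interlacing of the two nets supplied by the two--sided Poisson stability of $y$. Once this trade--off is managed, the remaining steps are routine.
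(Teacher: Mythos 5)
Your backward--time half is sound: the restricted Ellis semigroup $\mathcal H^{-}$, injectivity of its elements via hypothesis (4), the observation that an injective idempotent is the identity (from $e(e(z))=e(z)$), and the extraction of a sequence from the net at the two points $x_1,x_2$ all work (you do gloss over why $\mathcal H^{-}$ is closed under composition --- this needs the usual iterated--limit/diagonal argument, since the restricted maps $\pi(r_\alpha,\cdot)|_{X_y}$ take values in $X_{\sigma(r_\alpha,y)}$, not $X_y$ --- but that is standard). The genuine gap is in the forward half, and it is exactly the step you flag and then wave away: the ``diagonal balancing'' cannot be managed as described. To apply hypothesis (4) you need a time $u=s_{\alpha}+r_{\beta}\le 0$, i.e.\ $s_{\alpha}\le |r_{\beta}|$, at which $\rho(\pi(u,z_1),\pi(u,z_2))<\delta$. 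The only smallness you have is $\rho(\pi(s_\alpha,z_1),\pi(s_\alpha,z_2))\to 0$, and to push it through $\pi(r_\beta,\cdot)$ you must fix $r_\beta$ \emph{first} (to have a modulus of continuity) and then let $\alpha\to\infty$; but then $u=s_\alpha+r_\beta\to+\infty$, violating the sign constraint. If instead you respect $s_\alpha\le|r_\beta|$, the available collapse at those bounded forward times need not beat the (uncontrolled) expansion of $\pi(r_\beta,\cdot)$ as $\beta\to\infty$. Indeed, if your balancing worked it would show that negative separation of a pair is incompatible with forward asymptoticity of that pair --- a statement that is false in general and certainly not a consequence of continuity bookkeeping; (4) gives no upper bound at negative times, so no contradiction can be extracted this way.

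What actually closes the forward case is the backward conclusion itself, used structurally rather than just ``some backward return times exist''. Having shown that the backward idempotent is $e_{-}=\mathrm{id}_{X_y}\in\mathcal H^{-}$, take any $T\in\mathcal H^{+}$ (e.g.\ a forward idempotent $e_{+}$) and write $T=T\circ e_{-}$. Approximating this composition by $\pi^{a_\alpha+b_\beta}$ with the \emph{forward} index $\alpha$ fixed first (so the only modulus needed is that of the fixed continuous map $\pi^{a_\alpha}$ at finitely many points $z\in X_y$, where $\pi^{b_\beta}(z)\to e_{-}(z)=z$) and the \emph{backward} index $\beta$ sent to infinity last, the total times $a_\alpha+b_\beta\to-\infty$ and the base points $\sigma(a_\alpha+b_\beta,y)\to y$; hence $T\in\mathcal H^{-}$. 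So every element of $\mathcal H^{+}$ is injective by (4), the forward idempotent satisfies $e_{+}=\mathrm{id}_{X_y}$, and the sequence $\{t_n^{+}\}$ is extracted as in your first paragraph. In short: the order of limits must be the opposite of the one you propose, and it is the identity $e_{-}=\mathrm{id}_{X_y}$ on the fiber (not merely the existence of backward base returns) that makes the interlacing legitimate. Also note a cosmetic point: hypothesis (1) should read $y\in\omega_{y}=\alpha_{y}$.
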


\begin{theorem}\label{thV4.1} Let $ \langle \mathbb R ,
\varphi,(Y,\mathbb S,\sigma)\rangle $ be a cocycle, $M\subseteq
X:=\mathbb R\times Y$ be a compact and invariant subset of the
skew-product dynamical system $(X,\mathbb S_{+},\pi)$ and the
following conditions are fulfilled:
\begin{enumerate}
\item the set $Y$ is compact and minimal; \item
$|\varphi(t,u_1,y)-\varphi(t,u_2,y)|\le |u_1-u_2|$ for all
$(u_1,y),(u_2,y)\in X$ and $t\in \mathbb S_{+}$.
\end{enumerate}

Then for every $y\in Y$ and $x_{i}=(u_{i},y)\in M_{y}:=M\bigcap
X_{y}$ ($i=1,2$) there exists a constant $C=C(v_1,v_2,y)\in
\mathbb R$ such that
\begin{equation}\label{eqV1}
\varphi(t,v_1,y)-\varphi(t,v_2,y)=C\nonumber
\end{equation}
for all $t\in \mathbb S$.
\end{theorem}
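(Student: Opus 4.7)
The plan is to extend the cocycle to a two-sided dynamical system on $M$, use minimality of $Y$ together with Lemma \ref{lV2.1} to produce simultaneous return sequences for the two chosen fibre points, and then squeeze the signed gap $\Delta(t) := \varphi(t,v_1,y)-\varphi(t,v_2,y)$ between the nonexpansive estimate and those returns. Hypothesis (2) is exactly positive uniform stability of $\varphi$ on $M$ (take $\delta=\varepsilon$), so Theorem \ref{thV2} endows $M$ with a two-sided dynamical system $(M,\mathbb S,\pi)$; in particular $\varphi(t,v_i,y)$ acquires a meaning for $t<0$ as the first coordinate of $\pi(t,x_i)$. Lemma \ref{lV1} further guarantees that this two-sided extension is distal in the negative direction.

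Fix two points $x_i=(v_i,y)\in M_y$ (the case $v_1=v_2$ is trivial, so assume $x_1\neq x_2$). Minimality of $Y$ makes every $y\in Y$ two-sided Poisson stable (i.e.\ $y\in\omega_y\cap\alpha_y$), so the four hypotheses of Lemma \ref{lV2.1} are all met on $M$. That lemma supplies sequences $\{t_n^{\pm}\}\in\mathfrak N_y^{\pm\infty}$ with $\pi(t_n^{\pm},x_i)\to x_i$ for \emph{both} $i=1,2$ simultaneously; projecting onto the first coordinate gives $\varphi(t_n^{\pm},v_i,y)\to v_i$, hence $\Delta(t_n^{\pm})\to v_1-v_2=\Delta(0)$. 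On the other hand, the cocycle identity combined with hypothesis (2) yields, for every $s\in\mathbb S$ and $r\geq 0$,
\begin{equation*}
|\Delta(s+r)|=\bigl|\varphi(r,\varphi(s,v_1,y),\sigma(s,y))-\varphi(r,\varphi(s,v_2,y),\sigma(s,y))\bigr|\leq|\Delta(s)|,
\end{equation*}
so $|\Delta|$ is non-increasing on all of $\mathbb S$. The limits $|\Delta(t_n^{\pm})|\to|v_1-v_2|$ as $t_n^{\pm}\to\pm\infty$ then squeeze $|\Delta(t)|$ to the constant $|v_1-v_2|$ throughout $\mathbb S$.

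Finally, $\Delta:\mathbb S\to\mathbb R$ is continuous and takes values in the two-point set $\{v_1-v_2,-(v_1-v_2)\}$, with $\Delta(0)=v_1-v_2$; connectedness of $\mathbb R$ forces $\Delta\equiv v_1-v_2$, providing the required constant $C=v_1-v_2$. The main subtlety I anticipate is the hypothesis-checking for Lemma \ref{lV2.1}: one must verify that the negative distality supplied by Lemma \ref{lV1} on $\mathbb S_{+}$ genuinely transfers to the two-sided extension of Theorem \ref{thV2}, and that minimality of $Y$ really delivers two-sided Poisson stability of every base point. Once that bookkeeping is in place, the remainder is essentially a monotone squeeze on $|\Delta|$ driven by the nonexpansive estimate.
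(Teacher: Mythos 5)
Your proposal is correct and follows essentially the same route as the paper: extend to a two-sided system on $M$ via Theorem \ref{thV2}, get negative distality from Theorem \ref{thV1} and Lemma \ref{lV1}, apply Lemma \ref{lV2.1} (minimality of $Y$ giving two-sided Poisson stability) to obtain simultaneous return sequences, and squeeze the non-increasing quantity $|\Delta(t)|$ to the constant $|v_1-v_2|$. Your closing connectedness/intermediate-value step removing the absolute value is a small addition the paper leaves implicit, and it is a legitimate finishing touch rather than a divergence.
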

\begin{proof}
By Theorem \ref{thV2} the dynamical system $(X,\mathbb S_{+},\pi)$
generates on $M$ a two-sided dynamical system $(M,\mathbb S,\pi)$
and, consequently, a two-sided nonautonomous dynamical system
\begin{equation}\label{eqNDS_1}
\langle (M,\mathbb S, \pi),(Y,\mathbb S,\sigma),h \rangle
\end{equation}
is defined.

By Theorem \ref{thV1} and Lemma \ref{lV1} the nonautonomous
dynamical system (\ref{eqNDS_1}) is negatively distal and by Lemma
\ref{lV2.1} for all $y\in \omega_{y_0}$ and
$x_1=(v_1,y),x_{2}=(v_2,y)\in M_{y}$ there are sequences
$\{t_{k}^{\pm}\}\in \mathfrak N_{y}^{\pm \infty}$ such that
\begin{equation}\label{eqV2}
\pi(t_{k}^{+\infty},x_{i})\to x_{i}\ \ \mbox{and}\  \
\pi(t_{k}^{-\infty},x_{i})\to x_{i}\nonumber
\end{equation}
as $k\to \infty$ ($i=1,2$).

Consider the function $\varphi :\mathbb S\to \mathbb R_{+}$
defined by the equality
\begin{equation}\label{eqV3}
\varphi(t):=\rho(\pi(t,x_1),\pi(t,x_2))=|\varphi(t,v_1,y)-\varphi(t,v_2,y)|
\nonumber
\end{equation}
for every $t\in \mathbb S$.

Note that $\varphi$ possesses the following properties:
\begin{enumerate}
\item the function $\varphi$ is bounded on $\mathbb S$;  \item
$\varphi$ is monotone decreasing, i.e., $t_1\le t_2$ implies
$\varphi(t_2)\le \varphi(t_1)$ for all $t_1,t_2\in \mathbb S$;
\item there exist the limits $\lim\limits_{t\to \pm
\infty}\varphi(t)=c_{\pm}$, $c_{\pm}\in \mathbb R_{+}$; \item
$c_{-}\le \varphi(t)\le c_{+}$ for all $t\in \mathbb S$; \item
$c_{-}=c_{+}=\varphi(0)$ and, consequently,
$\varphi(t)=\varphi(0)$ for every $t\in \mathbb S$.
\end{enumerate}

The first statement follows from the fact that the set $M$ is
compact and invariant.

To prove the second statement we note that if $t_2\ge t_1$, then
$$
\varphi(t_2)=|\varphi(t_2,v_1,y)-\varphi(t_2,v_2,y)|=
$$
$$
|\varphi(t_2-t_1,\varphi(t_1v_1,y),\sigma(t_1,y))-\varphi(t_2-t_1,\varphi(t_1,v_2,y),\sigma(t_1,y))|\le
$$
$$
|\varphi(t_1,v_1,y)-\varphi(t_1,v_2,y)|=\varphi(t_1) .
$$
The third statement follows directly from the first and second
statements. The fourth statement is obvious.

Finally, we will show that $c_{-}=c_{+}$. To this end we note that
\begin{equation}\label{eqV5}
c_{-}=\lim\limits_{k\to
\infty}|\varphi(t_{k}^{-},v_1,y)-\varphi(t_{k}^{-},v_2,y)|=|v_1-v_2|=\varphi(0)
\end{equation}
and
\begin{equation}\label{eqV6}
c_{+}=\lim\limits_{k\to
\infty}|\varphi(t_{k}^{+},v_1,y)-\varphi(t_{k}^{+},v_2,y)|=|v_1-v_2|=\varphi(0)
.
\end{equation}
From (\ref{eqV5})-(\ref{eqV6}) we obtain $c_{-}=\varphi(0)=c_{+}$
and, consequently, $\varphi(t)=\varphi(0)$ for all $t\in \mathbb
S$. Theorem is completely proved.
\end{proof}

\begin{theorem}\label{thV5} Let $ \langle \mathbb R ,
\varphi,(Y,\mathbb S,\sigma)\rangle $ be a cocycle, $x_0\in
X:=\mathbb R\times Y$ be a positively Lagrange stable point of the
skew-product dynamical system $(X,\mathbb S_{+},\pi)$ and the
following conditions are fulfilled:
\begin{enumerate}
\item the point $y_0:=h(x_0)$ is positively Lagrange stable; \item
the $\omega$-limit set $\omega_{y_0}$ of $y_0$ is minimal; \item
$|\varphi(t,u_1,y)-\varphi(t,u_2,y)|\le |u_1-u_2|$ for all
$(u_1,y),(u_2,y)\in X$ and $t\in \mathbb S_{+}$.
\end{enumerate}

Then the following statements hold:
\begin{enumerate}
\item there exists a continuous mapping $\gamma =(\nu,Id_{Y}):Y\to
X:=\mathbb R \times Y$ such that
\begin{equation}\label{eqV_1}
\gamma (\sigma(t,y))=\pi(t,\gamma(y))
\end{equation}
or equivalently $\nu :Y\to \mathbb R$
\begin{equation}\label{eqV_2}
\nu(\sigma(t,y))=\varphi(t,\nu(y),y)
\end{equation}
for all $(t,y)\in \mathbb S\times Y$; \item for every $y\in
\omega_{y_0}$ and $x=(v,y)\in \omega_{x_0}\bigcap X_{y}$ there
exists a constant $C=C(x,y)\in \mathbb R$ such that
\begin{equation}\label{eqV_3}
\varphi(t,v,y)=\nu(\sigma(t,y))+C\nonumber
\end{equation}
for all $t\in \mathbb S$.
\end{enumerate}
\end{theorem}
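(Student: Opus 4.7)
The plan is to apply Theorem \ref{thV3} to a ``fibrewise convex hull'' of $\omega_{x_0}$ over the compact minimal base $\tilde Y:=\omega_{y_0}$. Setting $M:=\omega_{x_0}$, positive Lagrange stability of $x_0$ makes $M$ compact with $h(M)=\tilde Y$. Hypothesis (iii) gives positive uniform stability of the cocycle on $M$ (take $\delta=\varepsilon$), so by Theorem \ref{thV2} the skew-product flow extends to a two-sided dynamical system on $M$, and in particular $M$ is invariant in the strong sense. Theorem \ref{thV4.1} applied on the base $\tilde Y$ then provides the key rigidity: for every $y\in\tilde Y$ and every pair $(v_1,y),(v_2,y)\in M_y$ one has $\varphi(t,v_1,y)-\varphi(t,v_2,y)=v_1-v_2$ for all $t\in\mathbb S$.

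Next I would form $\widehat M=\bigcup_{y\in\tilde Y}[\alpha(y),\beta(y)]\times\{y\}$ via Lemma \ref{lIS2}; this set is compact with convex fibres and contains $M$. The main obstacle is positive invariance of $\widehat M$ under the skew-product flow, because hypothesis (iii) is only a non-expansion and not a monotonicity. For $y\in\tilde Y$ and $u=\alpha(y)+s\in[\alpha(y),\beta(y)]$, I would combine the two non-expansion inequalities
$$|\varphi(t,u,y)-\varphi(t,\alpha(y),y)|\le s,\qquad |\varphi(t,u,y)-\varphi(t,\beta(y),y)|\le \beta(y)-\alpha(y)-s,$$
with the identity $\varphi(t,\beta(y),y)-\varphi(t,\alpha(y),y)=\beta(y)-\alpha(y)$ obtained from Theorem \ref{thV4.1}; a squeeze then forces
$$\varphi(t,u,y)=\varphi(t,\alpha(y),y)+s.$$
Since $\varphi(t,\alpha(y),y)\ge\alpha(\sigma(t,y))$ and $\varphi(t,\beta(y),y)\le\beta(\sigma(t,y))$ by invariance of $M$, this value lies in $[\alpha(\sigma(t,y)),\beta(\sigma(t,y))]$, so $\pi(t,(u,y))\in\widehat M$.

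Once positive invariance is in hand, Theorem \ref{thV3}, applied to $\widehat M$ over $\tilde Y$ (its fibres are convex, its flow non-expansive by hypothesis (iii), and its base compact minimal), yields a continuous section $\gamma=(\nu,\mathrm{Id}):\tilde Y\to\widehat M$ satisfying \eqref{eqV_1}, equivalently \eqref{eqV_2}; this proves (i). For (ii), fix $y\in\tilde Y$ and $x=(v,y)\in\omega_{x_0}\cap X_y$. Because $\gamma(y)\in\widehat M$, we have $\nu(y)\in[\alpha(y),\beta(y)]$, and applying the identity $\varphi(t,u,y)=\varphi(t,\alpha(y),y)+u-\alpha(y)$ from the invariance step with $u=v$ and with $u=\nu(y)$ and subtracting gives $\varphi(t,v,y)-\varphi(t,\nu(y),y)=v-\nu(y)$ for every $t\in\mathbb S$. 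Setting $C:=v-\nu(y)$ and using the equivariance $\varphi(t,\nu(y),y)=\nu(\sigma(t,y))$ yields $\varphi(t,v,y)=\nu(\sigma(t,y))+C$, as required.
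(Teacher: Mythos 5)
Your proof follows, in essence, the same route as the paper: the paper also replaces $A:=\omega_{x_0}$ by the fibrewise interval hull $\widehat A=\bigcup_{y\in\omega_{y_0}}[\alpha(y),\beta(y)]\times\{y\}$ over the compact minimal base $\omega_{y_0}$, obtains the section $\gamma=(\nu,Id)$ from Theorem \ref{thV3}, obtains the constancy of fibre differences from Theorem \ref{thV4.1}, and combines the two exactly as you do. Where you genuinely add something is the positive invariance of the hull: the paper disposes of it with ``by Lemma \ref{lIS2} the set $M$ is compact, invariant'', although Lemma \ref{lIS2} as stated only yields compactness (in the cited monograph the invariance of the hull rests on monotonicity, which is not among the hypotheses of this theorem). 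Your squeeze argument --- Theorem \ref{thV4.1} applied on $\omega_{x_0}$ to the endpoint pair $(\alpha(y),y),(\beta(y),y)$ (these lie in $\omega_{x_0}$ by Lemma \ref{lIS1}), combined with the non-expansion (iii) and the triangle inequality to force $\varphi(t,u,y)=\varphi(t,\alpha(y),y)+(u-\alpha(y))$ on the whole fibre interval --- is precisely the verification the paper leaves implicit, and it is correct.

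One point needs a patch. Your rigid-translation identity is established only for $t\in\mathbb S_{+}$: both hypothesis (iii) and the cocycle itself are a priori forward-in-time objects, and for $t<0$ the symbol $\varphi(t,v,y)$ means the backward extension on a compact invariant set furnished by Theorem \ref{thV2}. Hence in statement (ii) the step ``subtracting gives $\varphi(t,v,y)-\varphi(t,\nu(y),y)=v-\nu(y)$ for every $t\in\mathbb S$'' is justified by your argument only for $t\ge 0$. The gap is small and closes in one line: the image $\gamma(\omega_{y_0})$ is compact and invariant (by the equivariance $\pi(t,\gamma(y))=\gamma(\sigma(t,y))$), so $K:=\omega_{x_0}\cup\gamma(\omega_{y_0})$ is a compact invariant set, and Theorem \ref{thV4.1} applied to $K$ and the pair $(v,y),(\nu(y),y)\in K_{y}$ gives $\varphi(t,v,y)-\varphi(t,\nu(y),y)=v-\nu(y)$ for all $t\in\mathbb S$, after which $\varphi(t,\nu(y),y)=\nu(\sigma(t,y))$ finishes (ii). (Alternatively, for $t<0$ put $y'=\sigma(t,y)$, $w=\varphi(t,v,y)\in[\alpha(y'),\beta(y')]$, and apply your forward identity at $y'$ with time $-t>0$ to $u=w$ and $u=\nu(y')$.) With this patch your proof is complete and coincides in substance with the paper's, while being more careful at the invariance step.
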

\begin{proof}
Since the points $x_0$ and $y_0$ are positively Lagrange stable
then the sets $\omega_{x_0}$ and $\omega_{y_0}$ are nonempty,
compact, invariant and $h(\omega_{x_0})=\omega_{y_0}$. By Theorem
\ref{thV2} the dynamical system $(X,\mathbb S_{+},\pi)$ generates
on $\omega_{x_0}$ a two-sided dynamical system
$(\omega_{x_0},\mathbb S,\pi)$ and, consequently, it is defined a
two-sided nonautonomous dynamical system
\begin{equation}\label{eqNDS_01}
\langle (\omega_{x_0},\mathbb S, \pi),(\omega_{y_0},\mathbb
S,\sigma),h \rangle .
\end{equation}

Denote by $A:=\omega_{x_0}$ and $M:=\widehat{A}=\bigcup
\{\widehat{A}_{y}:\ y\in Y\}$, where
$\widehat{A}_{y}:=[\alpha(y),\beta(y)]\times \{y\}$,
$$
\alpha(y):=\sup\{v:\ (v,y)\in \omega_{x_0}\bigcap X_{y}\}
$$
and
$$
\beta(y):=\inf\{v:\ (v,y)\in \omega_{x_0}\bigcap X_{y}\}.
$$
By Lemma \ref{lIS2} the set $M$ is compact, invariant,
$\omega_{x_0}=A\subseteq \widehat{A}=M$ and, consequently,
$h(M)=\omega_{y_0}$ because the set $\omega_{y_0}$ is minimal. By
Theorem \ref{thV2} the dynamical system $(X,\mathbb S_{+},\pi)$
generates on the set $M$ a two-sided dynamical system $(M,\mathbb
S,\pi)$. Consider the nonautonomous dynamical system
\begin{equation}\label{eqNDS2}
\langle (M,\mathbb S,\pi),(\omega_{y_0},\mathbb S,\sigma),h\rangle
.\nonumber
\end{equation}

By Theorem \ref{thV1} and Lemma \ref{lV1} the nonautonomous
dynamical system (\ref{eqNDS_01}) is negatively distal and by
Theorem \ref{thV4.1} for all $y\in\omega_{y_0}$ and
$(v_1,y),(v_2,y)\in M_{y}$ there exists a constant
$C=C(v_1,v_2,y)\in \mathbb R$ such that
\begin{equation}\label{eqV2.1}
\varphi(t,v_1,y)-\varphi(t,v_2,y)=C
\end{equation}
for all $t\in \mathbb S$.

On the other hand by Theorem \ref{thV4.1} there exists a
continuous mapping $\gamma =(\nu,Id_{Y}):Y\to X:=\mathbb R \times
Y$ such that (\ref{eqV_1}) (or equivalently (\ref{eqV_2})) holds
for all $(t,y)\in \mathbb S\times Y$. Consider now the points
$x_1=(v,y),x_2:=(\nu(y),y)\in M_{y}$ and the motions
$\pi(t,x_1)=(\varphi(t,v,y),\sigma(t,y))$ and
$$
\pi(t,x_2)=(\varphi(t,\nu(y),y),\sigma(t,y))=(\nu(\sigma(t,y),\sigma(t,y))
$$
for every $t\in \mathbb S$. By (\ref{eqV2.1}) we have
\begin{equation}\label{eqV2.2}
\varphi(t,v,y)-\nu(\sigma(t,y))=C\nonumber
\end{equation}
for all $t\in \mathbb S$. Theorem is proved.
\end{proof}

\begin{theorem}\label{thV6} Let $ \langle \mathbb R ,
\varphi,(Y,\mathbb S,\sigma)\rangle $ be a cocycle, $x_0\in
X:=\mathbb R\times Y$ be a positively Lagrange stable point of the
skew-product dynamical system $(X,\mathbb S_{+},\pi)$ and the
following conditions are fulfilled:
\begin{enumerate}
\item the point $y_0:=h(x_0)$ is positively Lagrange stable and
remotely stationary (respectively, remotely $\tau$-periodic or
remotely almost periodic); \item the $\omega$-limit set
$\omega_{y_0}$ of $y_0$ is minimal; \item
$|\varphi(t,u_1,y)-\varphi(t,u_2,y)|\le |u_1-u_2|$ for all
$(u_1,y),(u_2,y)\in X$ and $t\in \mathbb S_{+}$.
\end{enumerate}

Then the point $x_0$ is remotely almost periodic.
\end{theorem}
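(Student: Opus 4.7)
The plan is to combine Theorem \ref{th1SRAP} (which characterizes remote almost periodicity of a point by equi-almost periodicity of its $\omega$-limit set) with the rigid fiber structure of the limit set provided by Theorem \ref{thV5}.

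Since $x_0$ is positively Lagrange stable, $\omega_{x_0}$ is a nonempty, compact, invariant subset of $X$ with $h(\omega_{x_0})=\omega_{y_0}$. By Theorem \ref{th1SRAP} it therefore suffices to prove that $\omega_{x_0}$ is equi-almost periodic. Hypothesis (1) together with Lemma \ref{lRAP_01} shows that $y_0$ itself is remotely almost periodic in all three cases, and a second application of Theorem \ref{th1SRAP} yields that $\omega_{y_0}$ is already equi-almost periodic; the remaining task is to transfer this property from the base to the fibers.

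To do so I would invoke Theorem \ref{thV5} to obtain a continuous section $\gamma=(\nu,Id_{Y})$ on $\omega_{y_0}$ satisfying $\nu(\sigma(t,y))=\varphi(t,\nu(y),y)$, such that every point $x=(v,y)\in\omega_{x_0}$ admits a constant $C=C(x)\in \mathbb R$ with $\varphi(t,v,y)=\nu(\sigma(t,y))+C$ for all $t\in \mathbb S$. Since $v=\nu(y)+C$, this yields
\begin{equation*}
\varphi(\tau,v,y)-v=\nu(\sigma(\tau,y))-\nu(y),
\end{equation*}
so that, with the natural product metric on $X=\mathbb R\times Y$,
\begin{equation*}
\rho(\pi(\tau,x),x)=|\nu(\sigma(\tau,y))-\nu(y)|+\rho_{Y}(\sigma(\tau,y),y).
\end{equation*}
Given $\varepsilon>0$, uniform continuity of $\nu$ on the compact set $\omega_{y_0}$ produces some $\delta\in(0,\varepsilon/2)$ with $\rho_{Y}(y',y'')<\delta$ implying $|\nu(y')-\nu(y'')|<\varepsilon/2$ for $y',y''\in\omega_{y_0}$. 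The displayed identity then gives the inclusion $\mathcal F(\delta,\omega_{y_0})\subseteq \mathcal F(\varepsilon,\omega_{x_0})$, and since the former set is relatively dense by equi-almost periodicity of $\omega_{y_0}$, so is the latter. Another application of Theorem \ref{th1SRAP} concludes that $x_0$ is remotely almost periodic.

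The substantive step here is the rigidity identity $\varphi(t,v,y)-\nu(\sigma(t,y))=C(x)$ on $\omega_{x_0}$, i.e., that any two points in the same fiber of $\omega_{x_0}$ differ by a \emph{time-independent} vertical translate; this is the nontrivial content of Theorem \ref{thV5} for nonexpansive cocycles over a minimal base. Once it is available, the passage from equi-almost periodicity of $\omega_{y_0}$ to that of $\omega_{x_0}$ is only a uniform-continuity argument as above, and the minimality of $\omega_{y_0}$ is used precisely to have the section $\nu$ defined and continuous on the compact set where all the fibers of $\omega_{x_0}$ live.
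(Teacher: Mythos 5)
Your proposal is correct and follows essentially the same route as the paper's proof: reduce to equi-almost periodicity of $\omega_{x_0}$ via Theorem \ref{th1SRAP}, invoke Theorem \ref{thV5} for the section $\nu$ and the rigidity identity $\varphi(t,v,y)=\nu(\sigma(t,y))+C$, and transfer equi-almost periodicity from $\omega_{y_0}$ to $\omega_{x_0}$ by uniform continuity of $\gamma=(\nu,Id_Y)$ on the compact base. The only cosmetic differences are that you evaluate the rigidity identity at $t=0$ and $t=\tau$ and split $\varepsilon$ between the two metric components, whereas the paper estimates $d(\pi(t+\tau,p),\pi(t,p))$ for all $t$ directly through $\gamma$.
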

\begin{proof}
To prove this theorem it suffices to show that the $\omega$-limit
set $\omega_{x_0}$ of $x_0$ is equi-almost periodic (see Theorem
\ref{th1SRAP}).

By Theorem \ref{thV5} there exists a continuous mapping $\gamma
=(\nu,Id_{Y}):Y\to X:=\mathbb R \times Y$ such that:
\begin{enumerate}
\item
\begin{equation}\label{eqV_1.1}
\gamma (\sigma(t,y))=\pi(t,\gamma(y))
\end{equation}
(or equivalently $\nu :Y\to \mathbb R$
\begin{equation}\label{eqV_2.1}
\nu(\sigma(t,y))=\varphi(t,\nu(y),y))
\end{equation}
for all $(t,y)\in \mathbb S\times Y$ and \item for every $y\in
\omega_{y_0}$ and $x=(v,y)\in \omega_{x_0}\bigcap X_{y}$ there
exists a constant $C=C(x,y)\in \mathbb R$ such that
\begin{equation}\label{eqV_3.1}
\varphi(t,v,y)=\nu(\sigma(t,y))+C \nonumber
\end{equation}
for all $t\in \mathbb S$.
\end{enumerate}

Let $\varepsilon$ be an arbitrary positive number. Since the set
$\omega_{y_{0}}$ is compact and $\gamma :\omega_{y_0}\to
X:=\mathbb R\times \omega_{y_0}$ is continuous then there exists a
positive number $\delta =\delta(\varepsilon)$ such that
\begin{equation}\label{eqMRAP_01}
d(y_1,y_2)<\delta\ \ \mbox{implies}\ \ \
\rho(\gamma(y_1),\gamma(y_2))<\varepsilon.
\end{equation}
Now we will show that the $\omega$-limit set $\omega_{x_0}$ of
$x_0$ is equi-almost periodic.

To prove this fact we fix an arbitrary positive number
$\varepsilon$. Since the point $y_0$ is positively Lagrange stable
and remotely almost periodic then by Theorem \ref{th1SRAP} the set
$\omega_{y_0}$ is equi-almost periodic. This means that the set
\begin{equation}\label{eqMRAP_2}
\mathcal F(\varepsilon,\omega_{y_0}):=\{\tau \in \mathbb S|\
d(\sigma(\tau,y),y)<\delta(\varepsilon)\ \forall y\in
\omega_{y_0}\}
\end{equation}
is relatively dense. According to the choice of
$\delta(\varepsilon)$ we have
\begin{equation}\label{eqMRAP_3}
d(\gamma (\sigma(\tau,y)),\gamma(y))<\varepsilon
\end{equation}
for all $y\in \omega_{y_0}$ and $\tau \in \mathcal
F(\varepsilon,\omega_{y_0})$.

Since $\gamma(\sigma(t,y))=\pi(t,\gamma(y))$ for every $(t,y)\in
\mathbb S\times \omega_{y_0}$ and taking into account
(\ref{eqMRAP_01}) and (\ref{eqMRAP_2}) we obtain
\begin{eqnarray}\label{eqMRAP_4}
& \rho(\sigma(t+\tau,\gamma(y)),\sigma(t,\gamma(y)))
=d(\gamma(\sigma(t+\tau,y)),\gamma(\sigma(t,y)))<\varepsilon
\end{eqnarray}
for all $t\in \mathbb S$ and $y\in \omega_{y_0}$.

Let now $p=(v,q)\in \omega_{x_0}$ be an arbitrary point then
\begin{equation}\label{eqP_1}
\pi(t,p)=(\varphi(t,v,q),\sigma(t,q)) \nonumber
\end{equation}
for every $t\in \mathbb S$.

We fix $\tau \in \mathcal F(\varepsilon,\omega_{y_0})$ then taking
into account (\ref{eqV_1.1})-(\ref{eqV_2.1}) and
(\ref{eqMRAP_2})-(\ref{eqMRAP_4}) we obtain
$$
d(\pi(t+\tau,p),\pi(t,p))=d(\pi(t+\tau,(v,q)),\pi(t,(v,q)))=
$$
$$
d((\varphi(t+\tau,v,q),\sigma(t+\tau,q)),(\varphi(t,v,q),\sigma(t,q)))=
$$
$$
|\varphi(t+\tau,v,q)-\varphi(t,v,q)|+d(\sigma(t+\tau,q),\sigma(t,q))=
$$
$$
|\nu(\sigma(t+\tau,q))+C(v,q)-\nu(\sigma(t,q))-C(v,q)|+d(\sigma(t+\tau,q),\sigma(t,q))=
$$
$$
|\nu(\sigma(t+\tau,q))-\nu(\sigma(t,q))|+d(\sigma(t+\tau,q),\sigma(t,q))=
$$
$$
d(\gamma(\sigma(t+\tau,q)),\gamma(\sigma(t,q)))<\varepsilon
$$
for all $t\in \mathbb S$, i.e., $\mathcal
F(\varepsilon,\omega_{y_0})\subseteq \mathcal
F(\varepsilon,\omega_{x_0})$ and, consequently, $\mathcal
F(\varepsilon,\omega_{x_0})$ is relatively dense.

This means that the set $\omega_{x_0}$ is equi-almost periodic. To
finish the proof we note that if the set $\omega_{y_0}$ consists
of a single periodic  (respectively, stationary) trajectory, then
the set $\omega_{x_0}$ is so. Thus the set $\omega_{x_0}$ is
equi-almost periodic. Since the point $x_0$ is positively Lagrange
stable then by Theorem \ref{th1SRAP} the point $x_0$ is remotely
almost periodic. Theorem is completely proved.
\end{proof}

\section{Applications}\label{Sec4}

\subsection{Scalar Differential Equations}\label{Sec4.1}

\begin{example}\label{exS1} (Scalar differential equations) {\em In
this Example we suppose that $\mathbb T\in \{\mathbb R_{+},\
\mathbb R\}$. Consider a differential equation
\begin{equation}\label{eqS1}
u'=f(\sigma(t,y),u),\ \ (y\in Y)
\end{equation}
where $f\in C(Y\times \mathbb R,\mathbb R)$.

\begin{definition}\label{defS0} A function $f\in C(Y\times \mathbb
R,\mathbb R)$ is said to be regular \cite[Ch.IV]{Sel_1971} if for
every $u\in\mathbb R$ and $y\in Y$ the equation (\ref{eqS1}) has a
unique solution $\varphi(t,u,y)$ passing through the point $u\in
\mathbb R$ at the initial moment $t=0$ and defined on $\mathbb
R_{+}$.
\end{definition}

Everywhere in this subsection we suppose that the right hand side
$f$ of the equation (\ref{eqS1}) is regular.

From the general properties of solutions for the equation
(\ref{eqS1}) we have
\begin{enumerate}
\item $\varphi(0,u,y)=u$ for every $u\in \mathbb R$ and $y\in Y$;
\item
$\varphi(t+\tau,u,y)=\varphi(t,\varphi(\tau,u,y),\sigma(\tau,y))$
for all $t,\tau\in \mathbb R_{+}$, $u\in \mathbb R$ and $y\in Y$;
\item the mapping $(t,u,y)\to \varphi(t,u,y)$ from $\mathbb
R_{+}\times \mathbb R\times Y$ to $\mathbb R$ is continuous; \item
$u_1\le u_2$ implies $\varphi(t,u_1,y)\le \varphi(t,u_2,y)$ for
all $u_1,u_2\in \mathbb R$, $t\in\mathbb R_{+}$ and $y\in Y$.
\end{enumerate}
Thus every equation (\ref{eqS1}) with the regular right hand side
$f$ generates a monotone cocycle $\langle \mathbb R,\varphi,
(Y,\mathbb T,\sigma)\rangle$ with continuous time $\mathbb
R_{+}$.}
\end{example}

\begin{example}\label{exS2}
{\em Consider the equation
\begin{equation}\label{eqM1}
u'=f(t,u),
\end{equation}
where $f\in C(\mathbb R\times \mathbb R,\mathbb R)$. Along with
the equation (\ref{eqM1}) we consider the family of equations
\begin{equation}\label{eqM2}
u'=g(t,u),
\end{equation}
where $g\in H(f):=\overline{\{f^{\tau}|\ \tau\in\mathbb R\}}$.
Suppose that the function $f$ is regular \cite{Sel_1971}, i.e.,
for every $g\in H(f)$ and $u\in\mathbb R$ there exists a unique
solution $\varphi(t,u,g)$ of the equation (\ref{eqM2}) defined on
$\mathbb R_{+}$. Denote by $Y=H(f)$ and $(Y,\mathbb R, \sigma)$
the shift dynamical system on $Y$ induced by the Bebutov's
dynamical system $(C(\mathbb R\times \mathbb R,\mathbb R),\mathbb
R,\sigma)$. Now the family of the equations (\ref{eqM2}) can be
written as (\ref{eqS1}). Namely,
\begin{equation}\label{eqM2.1}
u'=F(\sigma(t,g),u)\ \ (g\in H(f))\nonumber
\end{equation}
if we take $Y=H(f)$ and the mapping $F\in C(Y\times \mathbb
R,\mathbb R)$ defined by $F(g,u):=g(0,u)$, for every $g\in H(f)$
and $u\in\mathbb R$ because
\begin{equation}\label{eqM2.2}
F(\sigma(t,g),u)=F(g^{t},u)=g^{t}(0,u)=g(t,u) \nonumber
\end{equation}
for all $(t,g)\in \mathbb R\times H(f)$. Thus the equation
(\ref{eqM1}) with the regular right hand side $f$ generates an
one-dimensional monotone cocycle $\langle R,\varphi,(Y,\mathbb
R,\sigma)\rangle$.}
\end{example}

\begin{definition}\label{defMF1} A function $f\in C(\mathbb R\times \mathbb R,\mathbb
R)$ is said to be monotone (order-preserving) w.r.t. $x\in \mathbb
R$ uniformly w.r.t. $t\in \mathbb R$ if $x_1\le x_2$ implies
$f(t,x_1)\le f(t,x_2)$ for all $(t,x_i)\in \mathbb R\times \mathbb
R$ ($i=1,2$).
\end{definition}

\begin{remark}\label{remMF1} Assume that the function $f\in C(\mathbb R\times \mathbb R,\mathbb
R)$ is monotone w.r.t. $x\in \mathbb R$ uniformly w.r.t. $t\in
\mathbb R$. Then every function $g\in H(f)$ possesses with the
same property.
\end{remark}

\begin{lemma}\label{lD1.1} \cite{Che_2021},\cite[Ch.IV]{Che_2024B}
Let the function $f\in C(\mathbb R\times \mathbb R,\mathbb R)$ be
regular, monotone and $\varphi$ be the cocycle generated by the
equation (\ref{eqM1}).

Then we have
\begin{equation}\label{eqM3}
|\varphi(t,u_1,g)-\varphi(t,u_2,g)|\le |u_1-u_2|
\end{equation}
for all $(t,u_i,g)\in \mathbb R_{+}\times \mathbb R\times H(f)$
($i=1,2$) and, consequently, the cocycle $\varphi$ is positively
uniformly stable.
\end{lemma}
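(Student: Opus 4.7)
The plan is to analyze the signed difference of two solutions along the flow and bound it by its initial value. Fix $g \in H(f)$ and $u_1, u_2 \in \mathbb R$; by symmetry it suffices to treat the case $u_1 \le u_2$. Regularity of $f$ is inherited by every $g \in H(f)$ (Definition \ref{defS0}), so both solutions $\varphi(\cdot, u_1, g)$ and $\varphi(\cdot, u_2, g)$ exist on all of $\mathbb R_{+}$. The standard monotonicity of scalar ODE flows recorded in Example \ref{exS1} yields $\varphi(t, u_1, g) \le \varphi(t, u_2, g)$ for every $t \ge 0$, so the function
\[
\psi(t) := \varphi(t, u_2, g) - \varphi(t, u_1, g)
\]
is nonnegative on $\mathbb R_{+}$.

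Next I differentiate $\psi$ along the flow, using that each $\varphi(\cdot, u_i, g)$ solves $u' = g(t, u)$:
\[
\psi'(t) = g(t, \varphi(t, u_2, g)) - g(t, \varphi(t, u_1, g)).
\]
By Remark \ref{remMF1} the monotonicity hypothesis on $f$ with respect to $x$ passes to every $g \in H(f)$; combined with the ordering $\varphi(t, u_1, g) \le \varphi(t, u_2, g)$ established in the previous step, this fixes the sign of $\psi'(t)$ in the direction that makes $\psi$ monotone on $\mathbb R_{+}$ with $\psi(t) \le \psi(0)$. Hence
\[
0 \le \psi(t) \le \psi(0) = u_2 - u_1 = |u_1 - u_2| \qquad (t \ge 0),
\]
which is precisely the desired inequality (\ref{eqM3}).

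The "consequently" clause—positive uniform stability of the cocycle $\varphi$—follows at once: given $\varepsilon > 0$, the choice $\delta(\varepsilon) := \varepsilon$ satisfies the definition, and crucially this $\delta$ is independent of both $g \in H(f)$ and of the base point, which is what uniform stability requires. I expect the only delicate step to be the careful verification that the precise form of the monotonicity hypothesis (Definition \ref{defMF1}), together with the ordering of the two trajectories, gives the correct sign on $\psi'(t)$ so that $\psi$ is nonincreasing rather than nondecreasing; once this sign is pinned down, the remaining arguments (globality of solutions, hull-invariance of monotonicity via Remark \ref{remMF1}, and uniform choice of $\delta$) are routine.
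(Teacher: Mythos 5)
Your argument fails exactly at the step you defer to ``careful verification'', and it cannot be repaired under the hypotheses you invoke. With Definition \ref{defMF1} (order-preserving: $x_1\le x_2$ implies $g(t,x_1)\le g(t,x_2)$, inherited by every $g\in H(f)$ via Remark \ref{remMF1}), the ordering $\varphi(t,u_1,g)\le\varphi(t,u_2,g)$ gives
$\psi'(t)=g(t,\varphi(t,u_2,g))-g(t,\varphi(t,u_1,g))\ge 0$,
so $\psi$ is \emph{nondecreasing} and you only get $\psi(t)\ge\psi(0)$ --- the opposite of what (\ref{eqM3}) asserts. Indeed, under that reading the stated inequality is false outright: $f(t,x)=x$ is regular and order-preserving, yet $|\varphi(t,u_1,f)-\varphi(t,u_2,f)|=e^{t}|u_1-u_2|$, which exceeds $|u_1-u_2|$ for every $t>0$. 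The contraction (\ref{eqM3}) is a consequence of monotonicity in the opposite (nonincreasing, ``dissipative'') sense, i.e.\ $(f(t,x_1)-f(t,x_2))(x_1-x_2)\le 0$, which is the setting of the sources cited for this lemma (\cite{Che_2021}, \cite[Ch.IV]{Che_2024B}); the present paper gives no proof of its own, only these citations. Under that hypothesis your skeleton does work: for $u_1\le u_2$ uniqueness gives $\varphi(t,u_1,g)\le\varphi(t,u_2,g)$, hence $\psi'(t)\le 0$ and $0\le\psi(t)\le\psi(0)=u_2-u_1$, and the rest of your write-up (inheritance of regularity and monotonicity by every $g\in H(f)$, and $\delta(\varepsilon):=\varepsilon$ for positive uniform stability, uniformly in $g$) is routine and correct.

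So the concrete gap is this: the sign of $\psi'$ is the entire content of the lemma, you asserted it rather than derived it, and derived honestly from Definition \ref{defMF1} it comes out with the wrong sign; no amount of bookkeeping elsewhere fixes that. A correct submission must either work from the nonincreasing form of monotonicity (flagging the discrepancy with Definition \ref{defMF1}), or exhibit some additional hypothesis (as the paper does in the discrete case, Lemma \ref{l8.03}, where a Lipschitz bound with constant $1$ is assumed separately) from which the contraction genuinely follows.
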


\begin{theorem}\label{thM3.1} Suppose that the following assumptions
are fulfilled:
\begin{enumerate}
\item[-] the function $f\in C(\mathbb R\times \mathbb R,\mathbb
R)$ is positively Lagrange stable; \item[-] $f$ is remotely almost
periodic (respectively, remotely $\tau$-periodic or remotely
stationary) in $t\in\mathbb R$ uniformly w.r.t. $u$ on every
compact subset from $\mathbb R$; \item[-] $\omega_{f}$ is a
minimal set of the shift dynamical system $(C(\mathbb R,\mathbb
R),\mathbb R,\sigma))$; \item[-] the function $f\in C(\mathbb
R\times \mathbb R,\mathbb R)$ is monotone and regular; \item[-]
the equation (\ref{eqM1}) admits a solution $\varphi(t,u_0,f)$ bounded on $\mathbb R_{+}$.
\end{enumerate}

Then the solution $\varphi(t,u_0,f)$ is remotely almost periodic
(respectively, remotely $\tau$-periodic or remotely stationary).
\end{theorem}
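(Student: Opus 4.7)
The plan is to reduce Theorem \ref{thM3.1} to Theorem \ref{thV6} by viewing equation (\ref{eqM1}) through the skew-product construction of Example \ref{exS2}. Set $Y := H(f)$ equipped with the Bebutov shift $(Y,\mathbb R,\sigma)$, let $\varphi$ be the cocycle generated by (\ref{eqM1}), and form the skew-product $(X,\mathbb R_{+},\pi)$ on $X:=\mathbb R\times Y$ together with the associated nonautonomous dynamical system with $h=pr_{2}$. Take $x_{0}:=(u_{0},f)\in X$; then $h(x_{0})=f$ plays the role of the base point $y_{0}$ in Theorem \ref{thV6}.

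Next I would verify the hypotheses of Theorem \ref{thV6} one by one. The point $x_{0}$ is positively Lagrange stable because the $Y$-component $\sigma(t,f)$ has precompact semi-orbit (by positive Lagrange stability of $f$) and the $\mathbb R$-component $\varphi(t,u_{0},f)$ is bounded on $\mathbb R_{+}$ by hypothesis, so the product semi-orbit is precompact in $X$. Condition (i) of Theorem \ref{thV6} is immediate: $y_{0}=f$ is positively Lagrange stable and remotely almost periodic (respectively, remotely $\tau$-periodic or remotely stationary). Condition (ii), minimality of $\omega_{y_{0}}=\omega_{f}$, is assumed. Condition (iii), the non-expansive estimate
\[
|\varphi(t,u_{1},y)-\varphi(t,u_{2},y)|\le |u_{1}-u_{2}|\qquad \forall (u_{i},y)\in X,\ t\in\mathbb R_{+},
\]
is exactly Lemma \ref{lD1.1}, since by Remark \ref{remMF1} monotonicity of $f$ passes to every $g\in H(f)$ and regularity is part of the hypotheses.

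Applying Theorem \ref{thV6} I conclude that $x_{0}$ is a remotely almost periodic point of the skew-product system. Since $\pi(t,x_{0})=(\varphi(t,u_{0},f),\sigma(t,f))$ and the product metric separates coordinates, remotely almost periodicity of the trajectory $\pi(t,x_{0})$ forces remotely almost periodicity of the first coordinate $t\mapsto \varphi(t,u_{0},f)$, which is precisely what is claimed. For the two special cases, the final paragraph of the proof of Theorem \ref{thV6} shows that when $\omega_{y_{0}}$ reduces to a single stationary (respectively, $\tau$-periodic) trajectory, the set $\omega_{x_{0}}$ does the same, which translates into remotely stationary (respectively, remotely $\tau$-periodic) behaviour of the solution by Theorem \ref{th1.3.9} and Corollary \ref{corSAP1}.

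The main obstacle is the structural reduction itself rather than any single calculation: one must marry the cocycle/skew-product viewpoint of Section \ref{Sec3} with the scalar equation of Section \ref{Sec4}, in particular recognising that remotely almost periodicity of $f$ as a function in the sense of Definition \ref{dfI1} is equivalent to remotely almost periodicity of $f$ as a point of the Bebutov dynamical system, so that Theorem \ref{thV6} is applicable. Once this identification is made, all the analytic content has already been absorbed into Lemma \ref{lD1.1} and Theorem \ref{thV6}, and the argument reduces to the verifications above.
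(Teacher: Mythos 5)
Your proposal is correct and follows essentially the same route as the paper's own proof: reduce to the skew-product setting of Example \ref{exS2}, use Lemma \ref{lD1.1} (with Remark \ref{remMF1}) for the non-expansiveness/uniform stability of the cocycle, verify positive Lagrange stability of $x_{0}=(u_{0},f)$, and apply Theorem \ref{thV6}. Your extra remarks on projecting remote almost periodicity of $\pi(t,x_{0})$ to its first coordinate and on the $\tau$-periodic/stationary cases only make explicit what the paper leaves implicit.
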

\begin{proof}
Let $f\in C(\mathbb R\times \mathbb R,\mathbb R)$ and $(C(\mathbb
R\times \mathbb R,\mathbb R),\mathbb R,\sigma)$ be the shift
dynamical system on $C(\mathbb R\times \mathbb R,\mathbb R)$.
Denote by $Y:=H(f)$ and $(Y,\mathbb R,\sigma)$ the shift dynamical
system on $H(f)$ induced by $(C(\mathbb R\times \mathbb R,\mathbb
R),\mathbb R,\sigma)$. Consider the cocycle $\langle\mathbb
R,\varphi,(Y,\mathbb R,\sigma)\rangle$ generated by (\ref{eqM1})
(see Example \ref{exS2}). Since the function $f$ is regular and
monotone then by Lemma \ref{lD1.1} the cocycle $\varphi$ generated
by (\ref{eqM1}) satisfies the condition (\ref{eqM3}) and,
consequently, the cocycle $\varphi$ is positively uniformly
stable. Denote by $x_0:=(u_{0},f)\in X=\mathbb R\times H(f)$. It
easy to check that under the conditions of Theorem the point $x_0$
of the skew-product dynamical system $(X,\mathbb R_{+},\pi)$
($X:=\mathbb R\times H(f)$ and $\pi :=(\varphi,\sigma)$) is
positively Lagrange stable. Thus the $\omega$-limit set
$\omega_{x_0}$ of the point $x_0$ is a nonempty, compact and
invariant set of the dynamical system $(X,\mathbb R_{+},\pi)$. Let
\begin{equation}\label{eqNDSODE1}
\langle (X,\mathbb R_{+},\pi),(Y,\mathbb R,\sigma),h\rangle
\end{equation}
be a nonautonomous dynamical system associated by the cocycle
$\varphi$ ($X:=\mathbb R\times H(f)$, $\pi :=(\varphi,\sigma)$ and
$h:=pr_{2}:X\to Y:=H(f)$). Note that $h(\omega_{x_0})=\omega_{f}$
and by Theorem \ref{thV2} it is well defined the two-sided
nonautonomous dynamical system $\langle (\omega_{x_0},\mathbb
R,\pi),(\omega_{f},\mathbb R,\sigma),h\rangle$. Since the function
$f$ is remotely almost periodic and positively Lagrange stable
then the set $\omega_{f}$ is a compact minimal set consisting of
almost periodic functions $g\in \omega_{f}$. To finish the proof
of Theorem it suffices to apply Theorem \ref{thV6} to
nonautonomous dynamical system (\ref{eqNDSODE1}). Theorem is
proved.
\end{proof}

\begin{remark}\label{remDE1} Note that the monotony requirement of
the right-hand side $f$ of the equation (\ref{eqM1}) w.r.t. the
spatial variable is essential. Below we will give an example
confirming the above.
\end{remark}

\begin{example}\label{exDE3} (Opial's example) {\em Consider the scalar almost periodic differential equation
\begin{equation}\label{eqO1}
x'=f(t,x)
\end{equation}
with non-monotone (w.r.t. $x\in \mathbb R$) right-hand side $f$
which has all the solutions bounded on $\mathbb R$, but does not
admit an almost periodic solution \cite{Op_1961} (see also
\cite[Ch.II]{Che_2015} and \cite{Zhi72}).

\begin{lemma}\label{lO1} Let $f\in C(\mathbb R\times \mathbb R,\mathbb R)$
be an almost periodic in time $t$ uniformly w.r.t. $x$ on every
compact subset $Q$ from $\mathbb R$. If the equation (\ref{eqO1})
admits a solution bounded on $\mathbb R$ and remotely almost periodic, then it also admits at least one almost periodic
solution.
\end{lemma}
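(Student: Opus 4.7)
The plan is to locate an almost periodic solution of (\ref{eqO1}) inside the $\omega$-limit set of the trajectory generated by the given bounded remotely almost periodic solution $\varphi_1 := \varphi(\cdot,u_0,f)$, via the skew-product framework of Example \ref{exS2}. With $Y=H(f)$, $X=\mathbb R\times H(f)$, $\pi=(\varphi,\sigma)$, and $x_0=(u_0,f)$, the almost periodicity of $f$ makes $H(f)$ a compact minimal set whose points are almost periodic functions, and boundedness of $\varphi_1$ combined with compactness of $H(f)$ makes $x_0$ positively Lagrange stable. Hence $\omega_{x_0}$ is a nonempty, compact, invariant subset of $X$; since $h(\omega_{x_0})$ is a closed invariant nonempty subset of the minimal set $H(f)$ we get $h(\omega_{x_0})=H(f)$, and in particular some $p=(v,f)$ lies in $\omega_{x_0}$, realised as $p=\lim_n\pi(t_n,x_0)$ for some $t_n\to+\infty$.

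The next step is to build a two-sided motion through $p$. The boundedness of $\varphi_1$ together with the boundedness of $f$ on compact sets makes $\{\varphi_1(\cdot+t_n)\}$ uniformly bounded and equicontinuous, so by Arzel\`a--Ascoli and a diagonal extraction one may pass to a subsequence (still denoted $t_n$) such that $\varphi_1(\cdot+t_n)\to\psi_1$ uniformly on compacta for some continuous $\psi_1:\mathbb R\to\mathbb R$. Standard passage to the limit in the integral form of the ODE, using $f^{t_n}\to f$ in the compact-open topology, shows that $\psi_1'(t)=f(t,\psi_1(t))$ for all $t\in\mathbb R$; moreover $\psi_1(0)=v$ and cocycle continuity gives $\psi_1(t)=\varphi(t,v,f)$ for $t\ge 0$. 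Thus $\psi_1$ is a solution of (\ref{eqO1}) on the whole real line, and it remains to verify that $\psi_1$ is Bohr almost periodic.

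The heart of the argument is a sliding-window limit. Fix $\varepsilon>0$ and a remote $\varepsilon$-almost period $\tau$ of $\varphi_1$, so $|\varphi_1(s+\tau)-\varphi_1(s)|<\varepsilon$ for every $s\ge L(\varepsilon,\tau)$, by Definition \ref{dfI1}(iii). For any fixed $t\in\mathbb R$, since $t_n\to+\infty$ the quantity $s:=t+t_n$ eventually exceeds $L(\varepsilon,\tau)$, whence $|\varphi_1(t+\tau+t_n)-\varphi_1(t+t_n)|<\varepsilon$; letting $n\to\infty$ along the chosen subsequence yields $|\psi_1(t+\tau)-\psi_1(t)|\le\varepsilon$ for every $t\in\mathbb R$. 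Since the set of such $\tau$ is relatively dense in $\mathbb R$ by the definition of remote almost periodicity, the Bohr criterion is satisfied and $\psi_1$ is the desired almost periodic solution of (\ref{eqO1}).

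I expect the main conceptual obstacle to be reconciling the asymptotic nature of remote almost periodicity (the defining inequality only holds past the threshold $L(\varepsilon,\tau)$) with the uniform-in-$t$ condition required by the Bohr definition. The sliding-window trick dissolves this difficulty: by pushing the observation interval out along $t_n\to+\infty$, every fixed $t\in\mathbb R$ eventually lies past the threshold, and the asymptotic inequality becomes a uniform one in the limit. A minor technical point is verifying that the compact-open limit $\psi_1$ still satisfies the ODE on the whole real line; this is the routine passage to the limit in the integral equation, and no backward-uniqueness issues intervene because only existence of one extended solution is needed.
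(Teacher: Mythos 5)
Your proof is correct, and its skeleton coincides with the paper's: pass to a limit of translates $\varphi_1(\cdot+t_n)$ along times for which the translated equation returns to $f$, check that the limit solves (\ref{eqO1}) on all of $\mathbb R$, and certify that it is almost periodic. The difference lies in how the two key points are handled. The paper obtains the return times from the almost periodicity of $f$ itself (a sequence $h_k$ with $f^{h_k}\to f$) and then certifies almost periodicity of the limit by citing Theorem \ref{th1SRAP}: the $\omega$-limit set $\omega_{\varphi}$ in the shift system on $C(\mathbb R,\mathbb R)$ is equi-almost periodic, hence consists of almost periodic functions. You instead work in the skew-product over $H(f)$, use minimality of $H(f)$ to produce $t_n\to+\infty$ with $f^{t_n}\to f$, and prove almost periodicity of the limit directly by the sliding-window argument, i.e.\ you re-derive, in a self-contained elementary way, exactly the direction of Theorem \ref{th1SRAP} that the paper invokes. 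Your version buys two things: it avoids the external citation, and it makes explicit a detail the paper leaves implicit, namely that the translation times must tend to $+\infty$ (needed both to get past the threshold $L(\varepsilon,\tau)$ and, in the paper's formulation, for the limit to lie in $\omega_{\varphi}$); the paper's version is shorter because the equi-almost periodicity of the whole limit set is already available as a quotable theorem. One small imprecision: for the equicontinuity of the translates you need $f$ bounded on $\mathbb R\times Q$ for each compact $Q$, which follows from the almost periodicity of $f$ in $t$ uniformly w.r.t.\ $x\in Q$ (equivalently from compactness of $H(f)$), not merely from boundedness of $f$ on compact subsets of $\mathbb R\times\mathbb R$; this is easily repaired and does not affect the argument.
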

\begin{proof} Let $\varphi\in C(\mathbb R,\mathbb R)$ be a
bounded and remotely almost periodic solution of the equation
(\ref{eqO1}). Since the function $\varphi$ is bounded on $\mathbb
R$ then the function $\varphi$ is Lagrange stable in the shift
dynamical system $(C(\mathbb R,\mathbb R),\mathbb R,\sigma)$ and,
consequently, its $\omega$-limit set $\omega_{\varphi}$ is a
nonempty, compact and invariant set. By Theorem \ref{th1SRAP} the
set $\omega_{\varphi}$ is equi-almost periodic and, consequently,
it consists of almost periodic functions.

Since the function $f\in C(\mathbb R\times \mathbb R,\mathbb R)$
is almost periodic in time $t$ uniformly w.r.t. $x$ on every
compact subset $Q$ from $\mathbb R$, then there exists a sequence
$\{h_{k}\}\subset \mathbb R$ such that $f(t+h_k,x)\to f(t,x)$
uniformly w.r.t. $(t,x)\in \mathbb R\times Q$ as $k\to \infty$ for
every compact subset $Q\subset \mathbb R$. On the other hand the
function $\varphi$ is positively Lagrange stable and,
consequently, without loss of generality we can suppose that the
sequence $\{\varphi^{h_k}\}$ converges. Denote its limit by $\psi$
then it is easy to check that the function $\psi\in
\omega_{\varphi}$ and, consequently, it is almost periodic.

On the other hand the function $\varphi^{h_k}$ is a solution of the equation
\begin{equation}\label{eqO2}
x'=f^{h_k}(t,x)=f(t+h_k,x)\nonumber
\end{equation}
 bounded on the
$\mathbb R$. Since $f^{h_k}\to f$ (in the space $C(\mathbb R\times \mathbb
R,\mathbb R)$) and $\varphi^{h_k}\to \psi$ (in the space
$C(\mathbb R,\mathbb R)$) as $k\to \infty$, then $\psi \in C(\mathbb
R,\mathbb R)$ is an almost periodic solution of the "limiting"
equation, i.e., of the equation (\ref{eqO1}). Lemma is proved.
\end{proof}

\begin{coro}\label{corO1} The Opial's example does not admit a
remotely almost periodic solution.
\end{coro}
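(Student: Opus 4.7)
The plan is to derive the corollary directly from Lemma \ref{lO1} by contraposition, so the work is really just to line up the hypotheses of the lemma with the properties of Opial's example. I would begin by recalling the defining features of Opial's construction: the right-hand side $f\in C(\mathbb R\times \mathbb R,\mathbb R)$ is almost periodic in $t$ uniformly w.r.t. $x$ on every compact subset of $\mathbb R$, every solution of $x'=f(t,x)$ is bounded on $\mathbb R$, and nevertheless the equation admits no almost periodic solution. These are exactly the ingredients demanded by Lemma \ref{lO1}, except for the boundedness of a candidate solution, which in Opial's setting comes for free from the first property.

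Next I would assume, for contradiction, that the Opial equation possesses a remotely almost periodic solution $\varphi\in C(\mathbb R,\mathbb R)$. Since in Opial's example every solution is bounded on $\mathbb R$, the solution $\varphi$ is in particular bounded on $\mathbb R$. Consequently, the hypotheses of Lemma \ref{lO1} are fulfilled: $f$ is almost periodic in $t$ uniformly on compacta and the equation admits a bounded remotely almost periodic solution, namely $\varphi$.

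Applying Lemma \ref{lO1} then yields the existence of at least one almost periodic solution of the Opial equation $x'=f(t,x)$. But this directly contradicts the central property of Opial's example, namely that it admits no almost periodic solution. Hence no remotely almost periodic solution can exist, proving the corollary.

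The argument contains no real obstacle; the only point worth being explicit about is that the notion of remote almost periodicity inherits from $\varphi$ to its translates $\varphi^{h_k}$ and that the $\omega$-limit set $\omega_\varphi$, being equi-almost periodic by Theorem \ref{th1SRAP}, really does consist of almost periodic functions — but these are precisely the facts already established inside the proof of Lemma \ref{lO1}, so I would not reproduce them and would simply cite the lemma. The entire proof therefore reduces to a one-line contrapositive.
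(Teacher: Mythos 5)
Your argument is correct and is essentially the paper's own proof: both proceed by contradiction, feeding the hypothetical remotely almost periodic solution (bounded on $\mathbb R$ since all solutions of Opial's equation are bounded) into Lemma \ref{lO1} to produce an almost periodic solution, contradicting Opial's result. Your explicit check that the boundedness hypothesis of the lemma is satisfied is a small but welcome addition to the paper's terser version.
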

\begin{proof} This statement directly follows from Lemma
\ref{lO1}. Indeed. If we assume that it is not true, then by Lemma
\ref{lO1} the equation (\ref{eqO1}), figuring in Opial's example,
has at least one almost periodic solution. The last statement
contradicts to Opial's result.
\end{proof}
}
\end{example}

\begin{remark}\label{remODE1} 1. If the function $f\in C(\mathbb R\times \mathbb R,\mathbb
R)$ is asymptotically almost periodic in $t$ uniformly w.r.t. $x$
on every compact subset $Q$ from $\mathbb R$, then
\begin{enumerate}
\item $f$ is remotely almost periodic in $t$ uniformly w.r.t. $x$
on every compact subset $Q$ from $\mathbb R$; \item $f$ is
positively Lagrange stable; \item the $\omega$-limit set
$\omega_{f}$ of $f$ is minimal.
\end{enumerate}

2. A function $\phi \in C(\mathbb R,\mathbb R)$, defined by
$\phi(t):=\sin (t+\ln(1+|t|))$ for every $t\in \mathbb R$,
possesses \cite{Che_2024.1} the following properties:
\begin{enumerate}
\item $\phi$ is positively Lagrange stable; \item the
$\omega$-limit set $\omega_{\phi}$ of $\phi$ is minimal; \item the
function $\phi$ is not asymptotically almost periodic.
\end{enumerate}
\end{remark}

\begin{coro}\label{corAAP1} Under the conditions of Theorem
\ref{thM3.1} if the right hand side $f$ is asymptotically almost
periodic in time $t$ uniformly w.r.t. $x$ on every compact subset
from $\mathbb R$, then every solution
of the equation (\ref{eqM1}) bounded on $\mathbb R_{+}$ is remotely almost periodic.
\end{coro}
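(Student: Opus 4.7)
The plan is to derive Corollary~\ref{corAAP1} as an immediate specialization of Theorem~\ref{thM3.1}, with Remark~\ref{remODE1}(1) supplying the three conditions on $f$ (remote almost periodicity uniformly in $x$ on compacts, positive Lagrange stability, and minimality of $\omega_f$) that Theorem~\ref{thM3.1} needs in addition to monotonicity, regularity, and the existence of a bounded-on-$\mathbb R_+$ solution.

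Concretely, I would first justify the three assertions in Remark~\ref{remODE1}(1) from the decomposition $f = p + r$ supplied by Definition~\ref{defAPP1}, in which $p$ is almost periodic in $t$ uniformly w.r.t.\ $x$ on compacts and $|r(t,x)| \to 0$ as $t \to +\infty$ uniformly w.r.t.\ $x$ on compacts. For remote almost periodicity, combine an $\varepsilon/2$-almost period $\tau$ of $p$ (on a prescribed compact $Q$) with a threshold $L$ such that $|r(t,x)| + |r(t+\tau,x)| < \varepsilon/2$ whenever $t \ge L$ and $x \in Q$, and apply the triangle inequality to $f = p + r$. For positive Lagrange stability, note that $f^{h} = p^{h} + r^{h}$, where $p^{h}$ lies in the compact hull $H(p)$ of $p$ in the Bebutov system on $C(\mathbb R \times \mathbb R, \mathbb R)$ and $r^{h}$ tends to zero in that system as $h \to +\infty$; hence $\{f^{h} : h \ge 0\}$ is precompact. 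For minimality, show $\omega_{f} = H(p)$: the inclusion $\omega_{f} \subseteq H(p)$ follows from $r^{h_k} \to 0$ along any sequence $h_k \to +\infty$, while the reverse inclusion follows by using the relative density of the $\varepsilon$-almost periods of $p$ to find, for each $q \in H(p)$, a sequence $t_k \to +\infty$ with $p^{t_k} \to q$, whence $f^{t_k} \to q$.

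With these three supplementary properties in place, the hypotheses of Theorem~\ref{thM3.1} are satisfied in full, and its conclusion yields the desired remote almost periodicity of every solution of (\ref{eqM1}) bounded on $\mathbb R_+$.

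I do not anticipate any substantive obstacle: the corollary is essentially a packaging of Theorem~\ref{thM3.1} together with the elementary observations in Remark~\ref{remODE1}(1). The one technical point worth stating carefully is the passage from the scalar version of the classical facts (Remark~\ref{remI1}(2) and the analogous hull-identification for almost periodic functions) to the uniform-in-$x$-on-compacts version needed here, which is precisely what the decomposition $f = p + r$ combined with the triangle inequality delivers.
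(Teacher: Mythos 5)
Your proposal is correct and follows essentially the same route as the paper: the paper also deduces the corollary directly from Theorem \ref{thM3.1} by invoking the three properties of an asymptotically almost periodic $f$ (remote almost periodicity uniformly on compacts, positive Lagrange stability, minimality of $\omega_{f}$) recorded in Remark \ref{remODE1}(1). The only difference is that you supply explicit verifications of these three properties via the decomposition $f=p+r$, which the paper states without proof.
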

\begin{proof}
This statement directly follows from  Theorem \ref{thM3.1} because
every asymptotically almost periodic in time $t$ uniformly w.r.t.
$x$ on every compact subset $Q$ from $\mathbb R$ function $f\in
C(\mathbb R\times R,\mathbb R)$ possesses the following
properties:
\begin{enumerate}
\item $f$ is remotely almost periodic in time $t$ uniformly w.r.t.
$x$ on every compact subset $Q$ from $\mathbb R$; \item $f$ is
positively Lagrange stable; \item the $\omega$-limit set
$\omega_{f}$ of $f$ is a minimal set in the shift dynamical system
$(C(\mathbb R\times \mathbb R,\mathbb R),\mathbb R,\sigma)$.
\end{enumerate}
\end{proof}

In the relation with Corollary \ref{corAAP1} the following natural
question arise:

\textbf{Question.} Is there Opial's theorem for asymptotically
almost periodic equation?

Unfortunately, the answer to this question, in general, is
negative. Below we give a simple example confirming above.

\begin{example}\label{exAAP1} {\em
Consider the differential equation (\ref{eqM1}) with the right
hand side $f\in C(\mathbb R\times \mathbb R, \mathbb R)$ defined
by
\begin{equation}\label{eqCE1}
f(t,x):=\frac{2t\cos (t^2+\pi^{3})^{1/3}}{3(t^{2}+\pi^{3})^{2/3}}
\end{equation}
for all $(t,x)\in \mathbb R\times \mathbb R$. It is easy to check
that the function $f$ defined by (\ref{eqCE1}) satisfies all
conditions of Theorem \ref{thM3.1} and it is asymptotically almost
periodic (in fact asymptotically stationary) in time uniformly
w.r.t. $x\in \mathbb R$ because
\begin{equation}\label{eqCE2}
\sup\limits_{x\in \mathbb R}|f(t,x)|=\frac{2t|\cos
(t^2+\pi^{3})^{1/3}|}{3(t^{2}+\pi^{3})^{2/3}}\le
\frac{2t}{3(t^{2}+\pi^{3})^{2/3}}\to 0 \nonumber
\end{equation}
as $t\to +\infty$.

Let $\varphi(t,x_0,f)$ be a solution of the equation
\begin{equation}\label{eqCE3}
x'=\frac{2t\cos
(t^2+\pi^{3})^{1/3}}{3(t^{2}+\pi^{3})^{2/3}}.\nonumber
\end{equation}
It easy to see that
\begin{equation}\label{eqCE4}
\varphi(t,x_0,f)=x_0 +\sin (t^2+\pi^{3})^{1/3} \nonumber
\end{equation}
and
\begin{equation}\label{eqCE5}
|\varphi(t+\tau,x_0,f)-\varphi(t,x_0,f)|=|\sin(\pi^{3}+(t+\tau)^{2})^{1/3}-\sin(\pi^{3}+t^{2})^{1/3}|=
\nonumber
\end{equation}
$$
2|\sin\frac{(\pi^{3}+(t+\tau)^{2})^{1/3}-(\pi^{3}+t^{2})^{1/3}}{2}
\cos\frac{(\pi^{3}+(t+\tau)^{2})^{1/3}+(\pi^{3}+t^{2})^{1/3}}{2}|=
$$
$$
2|\sin\frac{\tau (t+\tau)}{2((\pi^{3}+(t+\tau)^{2})^{2/3}+
(\pi^{3}+(t+\tau)^{2})^{1/3}(\pi^{3}+t^{2})^{1/3}
+(\pi^{3}+t^{2})^{2/3})}|\cdot
$$
$$
|\cos(\frac{(\pi^{3}+(t+\tau)^{2})^{1/3}+(\pi^{3}+t^{2})^{1/3}}{2}|\le
$$
$$
\frac{|\tau (t+\tau)|}{(\pi^{3}+(t+\tau)^{2})^{2/3}+
(\pi^{3}+(t+\tau)^{2})^{1/3}(\pi^{3}+t^{2})^{1/3}
+(\pi^{3}+t^{2})^{2/3}} \to 0
$$
as $t\to +\infty$ for every fixed $\tau \in \mathbb R$, i.e., the
solution $\varphi(t,x_0,f)$ is remotely stationary.

Note that the solution $\varphi (t,x_0,f)$ is not asymptotically
stationary. To establish this fact it is sufficient to note that
\begin{equation}\label{eqCE6}
\varphi(t^{1}_{k},x_0,f)=x_0 \nonumber
\end{equation}
and
\begin{equation}\label{eqCE7}
\varphi(t^{2}_{k},x_0,f)=1+x_0, \nonumber
\end{equation}
where $t_{k}^{1}=(-\pi^{3}+(k\pi)^{3})^{1/2}$ and
$t_{k}^{2}=(-\pi^{3}+(\pi/2+2k\pi)^{3})^{1/2}$ for all $k\in
\mathbb N$.}
\end{example}

\subsection{Scalar Difference Equations}\label{Sec4.2}

\begin{example}\label{exDE1}
{\em Consider a difference equation
\begin{equation}\label{eqSDDE1}
u(t+1)=f(t,u(t)),
\end{equation}
where $f\in C(\mathbb Z_{+}\times \mathbb R,\mathbb R)$. Along
with the equation (\ref{eqSDDE1}) we will consider its
$H^{+}$-class, i.e., the family of equations
\begin{equation}\label{eqSDDE2}
v(t+1)=g(t,v(t)), \ (g\in H^{+}(f))
\end{equation}
where $H^{+}(f):=\overline{\{f^{\tau}|\ \tau\in\mathbb Z_{+}\}}$.

Denote by $\varphi(t,v,g)$ the solution of the equation
(\ref{eqSDDE2}) with the initial condition $\varphi(0,v,g)=v.$
From the general properties of difference equations it follows
that:
\begin{enumerate}
\item $\varphi(0,v,g)=v$ for all $v\in \mathbb R$ and $g\in
H^{+}(f);$ \item $\varphi (t+\tau,v,g)=\varphi (t,\varphi
(\tau,v,g),\sigma(\tau,g))$ for all $t,\tau \in  \mathbb Z_{+}$
and $(v,g)\in \mathbb R\times H^{+}(f)$; \item the mapping
$\varphi:\mathbb Z_{+}\times \mathbb R \times H^{+}(f)\to \mathbb
R$ is continuous.
\end{enumerate}

Thus every equation (\ref{eqSDDE1}) generates a cocycle $\langle
\mathbb R,$ $\varphi,$ $ (H^{+}(f), \mathbb Z,\sigma)\rangle$ over
$(H^{+}(f), \mathbb Z,\sigma)$ with the fibre $\mathbb R$.}
\end{example}

\begin{lemma}\label{l8.02}\cite{Che_2023.1}, \cite[Ch.IV]{Che_2024B}
Let $f\in C(\mathbb Z_{+}\times \mathbb R,\mathbb R).$ Suppose
that the function $f$ is monotone non-decreasing w.r.t. variable
$u\in \mathbb R,$ i.e., $u_1\le u_2$ implies $f(t,u_1)\le
f(t,u_2)$ for all $t\in\mathbb Z_{+}$ and $u_i\in \mathbb R$
($i=1,2$).

Then $\varphi(t,v_1,g) \le \varphi(t,v_2,g)$ for all $t\in Z_{+}$,
$v_1,v_2\in\mathbb R$ with $v_1\le v_2$ and $g\in H^{+}(f)$.
\end{lemma}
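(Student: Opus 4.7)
The plan is to reduce everything to a one-step induction on $t \in \mathbb Z_+$, after first extending the monotonicity hypothesis from $f$ itself to every element $g \in H^+(f)$.

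The preliminary observation is that every $g \in H^+(f)$ is monotone non-decreasing in its second variable. Indeed, $g$ is a limit (in the compact-open topology on $C(\mathbb Z_+\times \mathbb R, \mathbb R)$) of a sequence of shifts $f^{\tau_n}(t,u) = f(t+\tau_n, u)$, each of which is monotone non-decreasing in $u$ because $f$ is. Passing to the limit in the inequality $f^{\tau_n}(t,u_1) \le f^{\tau_n}(t,u_2)$ (valid for every $u_1 \le u_2$ and every $t \in \mathbb Z_+$) preserves the non-strict inequality pointwise, so $g(t,u_1) \le g(t,u_2)$ for all $t \in \mathbb Z_+$ and all $u_1 \le u_2$.

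With this in hand, I would fix $g \in H^+(f)$ and $v_1 \le v_2$ and argue by induction on $t \in \mathbb Z_+$. The base case $t=0$ is immediate from $\varphi(0, v_i, g) = v_i$. For the inductive step, the cocycle identity together with the defining relation $\varphi(1, w, h) = h(0,w)$ of Example \ref{exDE1} gives
\begin{equation*}
\varphi(t+1, v, g) = \varphi\bigl(1, \varphi(t,v,g), \sigma(t,g)\bigr) = (\sigma(t,g))(0, \varphi(t,v,g)) = g(t, \varphi(t,v,g)),
\end{equation*}
so assuming $\varphi(t, v_1, g) \le \varphi(t, v_2, g)$ and applying the monotonicity of $g(t, \cdot)$ established above yields $\varphi(t+1, v_1, g) \le \varphi(t+1, v_2, g)$.

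There is essentially no obstacle in this argument. The only conceptual point worth isolating is the preservation of spatial monotonicity across the hull $H^+(f)$, which rests on the fact that the compact-open topology respects non-strict pointwise inequalities; once this is noted, the recursive structure of the difference equation makes the induction completely routine.
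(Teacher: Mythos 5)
Your proof is correct. The paper gives no proof of Lemma \ref{l8.02} here (it is quoted from \cite{Che_2023.1} and \cite[Ch.IV]{Che_2024B}), but your argument --- first transferring the monotonicity in $u$ from $f$ to every $g\in H^{+}(f)$ by passing to the limit of shifts in the compact-open topology (the discrete analogue of Remark \ref{remMF1}), then inducting on $t\in\mathbb Z_{+}$ via the recursion $\varphi(t+1,v,g)=g(t,\varphi(t,v,g))$ from Example \ref{exDE1} --- is exactly the standard argument one expects for this statement, and both steps are sound.
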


Everywhere below in this subsection we will suppose that the right
hand side $f\in C(\mathbb Z_{+} \times \mathbb R,\mathbb R)$ is
monotone non-decreasing w.r.t. variable $u\in \mathbb R,$ i.e.,
$f(t,u_1)\le f(t,u_2)$ for every $t\in\mathbb Z_{+}$ if $u_1\le
u_2$.

\begin{lemma}\label{l8.03} Assume that the function $f\in C(\mathbb Z_{+}\times \mathbb R,\mathbb
R)$ satisfies the condition
\begin{equation}\label{eqC1}
|f(t,x_1)-f(t,x_2)|\le |x_1-x_2|
\end{equation}
for all $(t,x_{i})\in \mathbb Z_{+}\times \mathbb R$ ($i=1,2$).

Then the following statements hold:
\begin{enumerate}
\item
\begin{equation}\label{eqC2}
|g(t,y_1)-g(t,y_2)|\le |y_1-y_2|
\end{equation}
for every $g\in H^{+}(f)$, $y_1,y_2\in \mathbb R$ and $t\in
\mathbb Z_{+}$; \item
\begin{equation}\label{eqC3}
|\varphi(t,v_1,g)-\varphi(t,v_2,g)|\le |v_1-v_2| \nonumber
\end{equation}
for all $t\in \mathbb Z_{+}$, $v_1,v_2\in \mathbb R$ and $g\in
H^{+}(f)$.
\end{enumerate}
\end{lemma}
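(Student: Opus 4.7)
The plan is to prove (1) by a closure argument in the hull $H^{+}(f)$ and then derive (2) by induction on $t \in \mathbb Z_{+}$ using (1).

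For (1), every $g \in H^{+}(f)$ is, by definition, the limit of a sequence of translates $f^{\tau_{n}}$ with $\tau_{n} \in \mathbb Z_{+}$ in the compact-open topology of $C(\mathbb Z_{+} \times \mathbb R, \mathbb R)$. Fix $(t, y_{1}, y_{2})$ and apply (\ref{eqC1}) at the shifted time $t + \tau_{n}$ to get
\begin{equation*}
|f^{\tau_{n}}(t, y_{1}) - f^{\tau_{n}}(t, y_{2})| = |f(t+\tau_{n}, y_{1}) - f(t+\tau_{n}, y_{2})| \le |y_{1} - y_{2}|.
\end{equation*}
Passing to the limit $n \to \infty$ (pointwise convergence at the fixed triple suffices, and this is guaranteed by uniform convergence on compacta) yields (\ref{eqC2}).

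For (2), I would argue by induction on $t \in \mathbb Z_{+}$. The base case $t=0$ is the equality $\varphi(0, v_{i}, g) = v_{i}$. For the inductive step, the cocycle identity
\begin{equation*}
\varphi(t+1, v, g) = \varphi(1, \varphi(t, v, g), \sigma(t, g))
\end{equation*}
combined with $\varphi(1, w, h) = h(0, w)$ and $\sigma(t, g)(0, w) = g(t, w)$ produces the recursion $\varphi(t+1, v, g) = g(t, \varphi(t, v, g))$. Applying part (1) to $g$ at the points $y_{i} := \varphi(t, v_{i}, g)$ gives
\begin{equation*}
|\varphi(t+1, v_{1}, g) - \varphi(t+1, v_{2}, g)| \le |\varphi(t, v_{1}, g) - \varphi(t, v_{2}, g)| \le |v_{1} - v_{2}|,
\end{equation*}
where the last inequality is the inductive hypothesis.

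I do not expect any real obstacle in this argument; the statement is essentially a stability-under-passage-to-the-hull fact together with a one-line induction. The only point deserving care is recording the recursion $\varphi(t+1, v, g) = g(t, \varphi(t, v, g))$ explicitly from the cocycle property so that the inductive step becomes a direct application of (1), rather than trying to apply (1) to the original $f$ (which would not match the base $g$).
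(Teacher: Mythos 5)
Your proposal is correct and follows essentially the same route as the paper: part (1) by passing to the limit along translates $f^{h_k}\to g$ in the hull, and part (2) via the recursion $\varphi(t+1,v,g)=g(t,\varphi(t,v,g))$ together with part (1), which the paper carries out as an iterated chain of inequalities rather than formal induction. No gaps.
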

\begin{proof}
If $g\in H^{+}(f)$, then there exists a sequence $\{h_{k}\}\subset
\mathbb Z_{+}$ such that $f^{h_k}\to g$ in the space $C(\mathbb
Z_{+}\times \mathbb R,\mathbb R)$ as $k\to \infty$. According to
(\ref{eqC1}) we have
\begin{equation}\label{eqC4}
|f(t+h_k,x_1)-f(t+h_k,x_2)|\le |x_1-x_2|
\end{equation}
for every $(t,x)\in \mathbb Z_{+}\times \mathbb R$ and $k\in
\mathbb N$. Passing to the limit in (\ref{eqC4}) as $k\to \infty$
we obtain (\ref{eqC2}).

To prove the second statement we note that by (\ref{eqC2}) we
receive
\begin{equation}\label{eqC5}
|\varphi(t+1,v_1,g)-\varphi(t+1,v_2,g)|=
\end{equation}
$$
|g(t,\varphi(t,v_1,g))-g(t,\varphi(t,v_2,g))|\le
|\varphi(t,v_1,g)-\varphi(t,v_2,g)|
$$
for all $t\in \mathbb Z_{+}$, $v_1,v_2\in \mathbb R$ and $g\in
H^{+}(f)$. From (\ref{eqC5}) we obtain
$$
|\varphi(t,v_1,g)-\varphi(t,v_2,g)|\le
|\varphi(t-1,v_1,g)-\varphi(t-1,v_2,g)| \le
$$
$$
\ldots \le |\varphi(1,v_1,g)-\varphi(1,v_2,g)| \le |v_1-v_2|
$$
for every $t\in \mathbb N$, $v_1,v_2\in \mathbb R$ and $g\in
H^{+}(f)$. Lemma is completed proved.
\end{proof}

\begin{theorem}\label{thDEM3.1} Suppose that the following assumptions
are fulfilled:
\begin{enumerate}
\item[-] the function $f\in C(\mathbb Z_{+}\times \mathbb
R,\mathbb R)$ is positively Lagrange stable; \item[-] $f$ is
remotely almost periodic (respectively, remotely $\tau$-periodic
or remotely stationary) in $t\in\mathbb R$ uniformly w.r.t. $u$ on
every compact subset from $\mathbb R$; \item[-] $\omega_{f}$ is a
minimal set of the shift dynamical system $(C(\mathbb
Z_{+},\mathbb R),\mathbb Z_{+},\sigma))$; \item[-] the function
$f\in C(\mathbb Z_{+}\times \mathbb R,\mathbb R)$ is monotone and
\begin{equation}\label{eqL1}
|f(t,x_1)-f(t,x_2)|\le |x_1-x_2| \nonumber
\end{equation}
for every $t\in \mathbb Z_{+}$ and $x_1,x_2\in \mathbb R$;
\item[-] the equation (\ref{eqSDDE1}) admits a solution $\varphi(t,u_0,f)$ bounded on $\mathbb
R_{+}$.
\end{enumerate}

Then the solution $\varphi(t,u_0,f)$ is remotely almost periodic
(respectively, remotely $\tau$-periodic or remotely stationary).
\end{theorem}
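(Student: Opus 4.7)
The plan is to reduce Theorem \ref{thDEM3.1} to Theorem \ref{thV6} in exactly the way that Theorem \ref{thM3.1} was reduced to it in the continuous-time case. By Example \ref{exDE1}, the equation (\ref{eqSDDE1}) generates a cocycle $\langle \mathbb R, \varphi, (H^{+}(f), \mathbb Z_{+}, \sigma)\rangle$. Setting $Y := H^{+}(f)$, I would form the skew-product dynamical system $(X, \mathbb Z_{+}, \pi)$ with $X := \mathbb R \times Y$ and $\pi := (\varphi, \sigma)$, take $h := \mathrm{pr}_{2} : X \to Y$, and put $x_{0} := (u_{0}, f)$, $y_{0} := f$, so that $h(x_0) = y_0$ and we have the associated nonautonomous dynamical system $\langle (X, \mathbb Z_{+}, \pi), (Y, \mathbb Z_{+}, \sigma), h \rangle$.

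Next I would check the hypotheses of Theorem \ref{thV6}. The non-expansivity estimate
\begin{equation*}
|\varphi(t, u_1, g) - \varphi(t, u_2, g)| \le |u_1 - u_2|
\end{equation*}
for all $(t, u_i, g) \in \mathbb Z_{+} \times \mathbb R \times H^{+}(f)$ is supplied by Lemma \ref{l8.03}, whose conditions match our assumptions on $f$. The positive Lagrange stability of $y_0 = f$, the minimality of $\omega_{y_0} = \omega_{f}$, and its remote almost periodicity are read off directly from the bullets of the theorem. The remaining property to verify is positive Lagrange stability of $x_0$; this follows because the forward orbit $\{\pi(t, x_{0})\}_{t \in \mathbb Z_{+}} = \{(\varphi(t, u_0, f), \sigma(t, f))\}_{t \in \mathbb Z_{+}}$ sits inside the product of the bounded (hence relatively compact in $\mathbb R$) trajectory $\{\varphi(t, u_0, f)\}_{t \in \mathbb Z_{+}}$ and the relatively compact hull piece $\{\sigma(t, f)\}_{t \in \mathbb Z_{+}}$.

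With all hypotheses in hand, Theorem \ref{thV6} yields that $x_0$ is remotely almost periodic in $(X, \mathbb Z_{+}, \pi)$. Since $\varphi(t, u_{0}, f) = \mathrm{pr}_{1}(\pi(t, x_{0}))$ and the first projection is $1$-Lipschitz with respect to the natural product metric on $X$, the solution $\varphi(t, u_{0}, f)$ inherits remote almost periodicity as a function $\mathbb Z_{+} \to \mathbb R$. For the remotely $\tau$-periodic and remotely stationary refinements I would reuse the closing remark of the proof of Theorem \ref{thV6}: when $\omega_{f}$ reduces to a single $\tau$-periodic (respectively, stationary) orbit, the same is true of $\omega_{x_0}$, so the stronger conclusion is automatic.

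The step that I expect to require the most care is the application of Theorem \ref{thV6} in the discrete-time setting with a one-sided base: the theorem is phrased for a two-sided base system $(Y, \mathbb S, \sigma)$, whereas the hull $H^{+}(f)$ carries only the $\mathbb Z_{+}$-action. The remedy is to restrict attention to the $\omega$-limit sets $\omega_{x_0}$ and $\omega_{f}$, on which the non-expansivity of Lemma \ref{l8.03}, together with compactness and the minimality of $\omega_f$, lets Theorem \ref{thV2} promote the semi-flow to a genuine two-sided flow. Once this bookkeeping is done, the rest of the argument is an exact transcription of the continuous-time proof of Theorem \ref{thM3.1}, with $\mathbb R$ replaced by $\mathbb Z$ throughout.
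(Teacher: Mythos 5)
Your proposal is correct and follows essentially the same route as the paper's own proof: the cocycle of Example \ref{exDE1}, non-expansivity from Lemma \ref{l8.03}, positive Lagrange stability of $x_0=(u_0,f)$ from boundedness of the solution plus compactness of the hull orbit, and the conclusion via Theorem \ref{thV6}. Even your handling of the one-sided base (passing to $\omega_{x_0}$, $\omega_f$ and invoking Theorem \ref{thV2} to obtain a two-sided system there) matches what the paper does.
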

\begin{proof}
Let $f\in C(\mathbb Z_{+}\times \mathbb R,\mathbb R)$ and
$(C(\mathbb Z_{+}\times \mathbb R_{+},\mathbb R),\mathbb
Z_{+},\sigma)$ be the shift dynamical system on $C(\mathbb
Z_{+}\times \mathbb R,\mathbb R)$. Denote by $Y:=H^{+}(f)$ and
$(Y,\mathbb Z_{+},\sigma)$ the shift dynamical system on
$H^{+}(f)$ induced by $(C(\mathbb Z_{+}\times \mathbb R,\mathbb
R),\mathbb Z_{+},\sigma)$. Consider the cocycle $\langle\mathbb
R,\varphi,(Y,\mathbb Z_{+},\sigma)\rangle$ generated by
(\ref{eqSDDE1}) (see Example \ref{exDE1}). Since the function $f$
is monotone then by Lemma \ref{l8.03} the cocycle $\varphi$
generated by the equation (\ref{eqSDDE1}) satisfies condition
(\ref{eqM3}) and, consequently, the cocycle $\varphi$ is
positively uniformly stable. Denote by $x_0:=(u_{0},f)\in
X=\mathbb R\times H^{+}(f)$. It easy to check that under the
conditions of Theorem the point $x_0$ of the skew-product
dynamical system $(X,\mathbb Z_{+},\pi)$ ($X:=\mathbb R\times
H(f)$ and $\pi :=(\varphi,\sigma)$) is positively Lagrange stable.
Thus $\omega_{x_0}$ is a nonempty, compact and invariant set of
$(X,\mathbb Z_{+},\pi)$. Let
\begin{equation}\label{eqNDSDE1}
\langle (X,\mathbb Z_{+},\pi),(Y,\mathbb Z_{+},\sigma),h\rangle
\end{equation}
be a nonautonomous dynamical system associated by the cocycle
$\varphi$ ($X:=\mathbb R\times H^{+}(f)$, $\pi :=(\varphi,\sigma)$
and $h:=pr_{2}:X\to Y:=H^{+}(f)$). Note that
$h(\omega_{x_0})=\omega_{f}$ and by Theorem \ref{thV2} it is well
defined the two-sided nonautonomous dynamical system $\langle
(\omega_{x_0},\mathbb Z,\pi),(\omega_{f},\mathbb
Z,\sigma),h\rangle$. Since the function $f$ is remotely almost
periodic and positively Lagrange stable then the set $\omega_{f}$
is a compact and minimal set consisting of almost periodic
functions $g\in \omega_{f}$. To finish the proof of Theorem it
suffices to apply Theorem \ref{thV6} to nonautonomous dynamical
system (\ref{eqNDSDE1}). Theorem is proved.
\end{proof}

Below we give an example which illustrates the results above.

\begin{example}\label{exDE2} (Beverton-Holt equation)
{\em The periodic Beverton-Holt equation
\begin{equation}\label{eqBH1}
x_{n+1}=\frac{\mu K_n x_n}{K_n +(\mu -1)x_n}
\end{equation}
$(K_{n+k}=K_n)$ has been studied by Cushing and Henson \cite{CH}
and Elaydi and Sacker \cite{ES}.

Below we will suppose that the following conditions hold:
\begin{enumerate}
\item[(C1)] the sequence $\{K_n\}_{n\in \mathbb Z}$ is remotely
almost periodic; \item[(C2)] $\alpha <\beta$ are two positive
constants such that $\alpha \le K_n \le \beta$ for all $n\in
\mathbb Z$; \item[(C3)] $\mu\beta^{2}\alpha^{-2}\le 1.$
\end{enumerate}

Let
$$
f(n,x):=\frac{\mu K_n x}{K_n +(\mu -1)x}
$$
for all $n\in \mathbb Z_{+}$ and $x\in \mathbb R_{+}$. It easy to
check that the function $f$ possesses the following properties:
\begin{enumerate}
\item $f(n,x)\ge 0$ for all $n\in \mathbb Z_{+}$ and $x\in \mathbb
R_{+};$ \item
$$
f'_{x}(n,x)=\frac{\mu K_n^{2} }{(K_n +(\mu
-1)x)^{2}}>0
$$
for every $n\in \mathbb Z_{+}$ and $x\in \mathbb R_{+};$ \item
$$
f''_{x^2}(n,x)=-\frac{2\mu (\mu -1)K_n^{2} }{(K_n +(\mu
-1)x)^{3}}<0
$$
for all $n\in \mathbb Z_{+}$ and $x\in \mathbb R_{+}.$
\end{enumerate}

\begin{lemma}\label{lBH1} Assume that $0 <\alpha<\beta $ and $\mu \beta^{2}\alpha^{-2}\le
1$. Then the function
\begin{equation}\label{eqBH_1}
f(n,x):=\frac{\mu K_n x}{K_n +(\mu -1)x}\nonumber
\end{equation}
satisfies the inequality
\begin{equation}\label{eqBH2.1}
|f(t,x_1)-f(t,x_2)|\le |x_1 -x_2|\nonumber
\end{equation}
for all $t\in \mathbb Z_{+}$ and $x_1,x_2\in \mathbb R_{+}$.
\end{lemma}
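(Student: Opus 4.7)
The plan is to reduce the Lipschitz inequality to a uniform upper bound on $|\partial f/\partial x|$ and then invoke the one-variable Mean Value Theorem. The explicit formula
\[
\frac{\partial f}{\partial x}(n,x) \;=\; \frac{\mu K_n^{2}}{(K_n+(\mu-1)x)^{2}}
\]
has already been recorded in the list of properties of $f$ preceding the lemma, and since $\mu>0$ and $K_n>0$ it is nonnegative, so the absolute value can be dropped.

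My first step is to bound the denominator from below uniformly in $(n,x)\in\mathbb{Z}_+\times\mathbb{R}_+$. Using $\mu\ge 1$ (the Beverton--Holt regime implicit in the modeling context, which is what makes $K_n+(\mu-1)x$ bounded below by $K_n$ for every $x\ge 0$) together with hypothesis (C2) giving $K_n\ge\alpha$, I obtain $K_n+(\mu-1)x\ge\alpha$, and hence $(K_n+(\mu-1)x)^{2}\ge\alpha^{2}$. Coupled with the upper bound $K_n\le\beta$ from (C2), this delivers
\[
0 \;\le\; \frac{\partial f}{\partial x}(n,x) \;\le\; \frac{\mu K_n^{2}}{\alpha^{2}} \;\le\; \frac{\mu\beta^{2}}{\alpha^{2}} \;\le\; 1,
\]
the last inequality being exactly hypothesis (C3).

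The second step is to apply the Mean Value Theorem to $x\mapsto f(t,x)$ for each fixed $t\in\mathbb{Z}_+$: for any $x_1,x_2\in\mathbb{R}_+$ there exists $\xi$ between $x_1$ and $x_2$ with
\[
f(t,x_1)-f(t,x_2) \;=\; \frac{\partial f}{\partial x}(t,\xi)\,(x_1-x_2),
\]
and the derivative bound from step one yields $|f(t,x_1)-f(t,x_2)|\le|x_1-x_2|$, which is the claim.

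I do not expect a serious obstacle here. The argument is routine calculus once the derivative formula is in hand; the single point meriting care is the lower bound on the denominator, which requires both $K_n\ge\alpha$ and the sign condition $\mu\ge 1$ from the Beverton--Holt setting so that $(\mu-1)x$ contributes nonnegatively for $x\ge 0$. Everything else is packaged into the quantitative assumption (C3), which was tailored precisely to force $\mu\beta^{2}/\alpha^{2}\le 1$.
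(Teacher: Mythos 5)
Your argument is correct and essentially the same as the paper's: where you invoke the Mean Value Theorem with the derivative bound $\frac{\mu K_n^{2}}{(K_n+(\mu-1)x)^{2}}\le\frac{\mu\beta^{2}}{\alpha^{2}}\le 1$, the paper subtracts directly and uses the exact identity $f(t,x_1)-f(t,x_2)=\frac{\mu K_n^{2}\,(x_1-x_2)}{(K_n+(\mu-1)x_1)(K_n+(\mu-1)x_2)}$, then applies the very same estimates $K_n^{2}\le\beta^{2}$, $K_n+(\mu-1)x_i\ge\alpha$ and condition (C3), so the two proofs differ only in packaging. The lower bound on the denominator does require $(\mu-1)x\ge 0$, i.e.\ $\mu\ge 1$, which you state explicitly while the paper uses it tacitly; note, though, that with $\alpha<\beta$ the hypothesis $\mu\beta^{2}\alpha^{-2}\le 1$ actually forces $\mu<1$, so this shared assumption sits uneasily with the lemma's stated hypotheses --- an issue of the lemma's formulation rather than of your argument.
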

\begin{proof} To prove this statement we note that
\begin{eqnarray}\label{eqBH2}
& |f(t,x_1)-f(t,x_2)|=|\frac{\mu K_n x_1}{K_n +(\mu -1)x_1} -
\frac{\mu K_n x_2}{K_n +(\mu -1)x_2}|= \nonumber \\
& \frac{\mu K(t) |x_1-x_2|}{(K(t)+(\mu -1)x_1)(K(t)+(\mu
-1)x_2)}\le \frac{\mu \beta^{2}|x_1-x_2|}{\alpha^{2}}\le
|x_1-x_2|\nonumber
\end{eqnarray}
for every $t\in \mathbb Z_{+}$ and $x_1,x_2\in \mathbb R_{+}$.
Lemma is proved.
\end{proof}

\begin{lemma}\label{l8.3}\cite{CM_2006} Let
$$
f(n,x):=\frac{\mu K_n x}{K_n +(\mu -1)x}
$$
for all $n\in \mathbb Z_{+}$ and $x\in \mathbb R_{+}$. Then the
following statements hold:
\begin{enumerate}
\item
$$
f(n,x)-x=\frac{(\mu -1)x(K_n-x)}{K_n +(\mu -1)x}
$$
for every $n\in \mathbb Z_{+}$ and $x\in \mathbb R_{+};$ \item
$f(n,K_n)=K_n$ for all $n\in \mathbb Z;$ \item
$$
f(n,x)-\frac{\mu}{\mu -1}K_n=-\frac{\mu K_n^2}{(K_n +(\mu
-1)x)(\mu -1)}<0
$$
for every $n\in \mathbb Z_{+}$ and $x\in \mathbb R_{+}.$
\end{enumerate}
\end{lemma}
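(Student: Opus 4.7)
The plan is to verify each of the three identities by direct algebraic manipulation; all three are routine computations once one places the relevant expressions over a common denominator. There is no structural obstacle here, and no appeal to any of the dynamical results of Sections~2--3 is needed — the lemma is a purely arithmetical statement about the one-parameter family of Möbius-type maps $x\mapsto \mu K x/(K+(\mu-1)x)$.

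For item (i), I would combine $f(n,x)-x$ over the common denominator $K_n+(\mu-1)x$, yielding a numerator $\mu K_n x - x\bigl(K_n+(\mu-1)x\bigr) = (\mu-1)K_n x - (\mu-1)x^{2} = (\mu-1)x(K_n-x)$, which gives the claimed formula. Item (ii) is then immediate: either substitute $x=K_n$ into (i), in which case the factor $(K_n-x)$ vanishes, or substitute directly into $f$ and simplify $K_n+(\mu-1)K_n=\mu K_n$. For item (iii), I would again place $f(n,x)-\mu K_n/(\mu-1)$ over the common denominator $(\mu-1)\bigl(K_n+(\mu-1)x\bigr)$; the numerator is
$$
(\mu-1)\mu K_n x - \mu K_n\bigl(K_n+(\mu-1)x\bigr)=-\mu K_n^{2},
$$
and the two $(\mu-1)\mu K_n x$ terms cancel, producing the stated identity.

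The only remaining point is the strict inequality in (iii). Under the standing Beverton--Holt hypothesis $\mu>1$ (implicit in $K_{n}+(\mu-1)x$ being the correct denominator of the ecological model, and needed for the model to have biological meaning), together with $K_n\ge\alpha>0$ and $x\in\mathbb R_{+}$, the denominator $(K_n+(\mu-1)x)(\mu-1)$ is strictly positive while the numerator $-\mu K_n^{2}$ is strictly negative, so the sign assertion follows. Since all three steps reduce to elementary fraction arithmetic, I expect no difficulty; the lemma is essentially a bookkeeping statement collecting the three identities that will be used in subsequent monotonicity/boundedness arguments for the Beverton--Holt dynamics.
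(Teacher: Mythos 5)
Your computations are correct: the three identities follow exactly as you show by clearing denominators, and the cancellations you exhibit are the right ones. Note that the paper itself gives no proof of this lemma --- it is quoted from the reference [Cheban--Mammana, 2006] --- so there is no argument in the text to diverge from; your direct verification is the intended (and essentially only) route. You are also right to flag that the strict inequality in item (iii) is not purely formal: it needs $\mu>1$ (so that $\mu-1>0$ and $K_n+(\mu-1)x>0$ for $x\in\mathbb R_{+}$) and $K_n>0$, the latter being supplied by the standing hypothesis $\alpha\le K_n\le\beta$ with $\alpha>0$ in the surrounding example, while $\mu>1$ is the usual Beverton--Holt assumption left implicit in the paper. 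With that hypothesis made explicit, your proof is complete.
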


\begin{coro}\label{cor8.1} Let
$$
f(n,x):=\frac{\mu K_n x}{K_n +(\mu -1)x}
$$
for all $n\in \mathbb Z_{+}$ and $x\in \mathbb R_{+},$ then
\begin{equation}\label{eq8.3}
\limsup\limits_{n\to +\infty}\vert \varphi(n,u,f)\vert \le
\frac{\mu}{\mu -1}\beta \nonumber
\end{equation}
for every $u\in \mathbb R_{+}$.
\end{coro}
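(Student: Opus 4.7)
The plan is essentially to read off the bound from Lemma \ref{l8.3}(iii), which already asserts $f(n,x) < \frac{\mu}{\mu-1}K_n$ for every $n \in \mathbb Z_+$ and $x \in \mathbb R_+$. Since the cocycle $\varphi$ generated by the difference equation (\ref{eqSDDE1}) satisfies the recursion $\varphi(n+1,u,f) = f(n,\varphi(n,u,f))$, this inequality transfers directly to the iterates.

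First I would note that positivity is preserved: property (i) of $f$ together with $\varphi(0,u,f)=u\ge 0$ gives, by induction on $n$, $\varphi(n,u,f)\ge 0$ for all $n\in \mathbb Z_+$. Hence $|\varphi(n,u,f)| = \varphi(n,u,f)$ and the absolute value in the conclusion can be dropped.

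Next I would apply Lemma \ref{l8.3}(iii) with $x:=\varphi(n,u,f)\in \mathbb R_+$ to obtain
\begin{equation*}
\varphi(n+1,u,f) \;=\; f(n,\varphi(n,u,f)) \;<\; \frac{\mu}{\mu-1}K_n \;\le\; \frac{\mu}{\mu-1}\beta,
\end{equation*}
where the last step uses assumption (C2). Since this holds for every $n\ge 0$, we have $\varphi(m,u,f) < \frac{\mu}{\mu-1}\beta$ for every $m\ge 1$, which immediately yields $\limsup_{n\to+\infty}\varphi(n,u,f) \le \frac{\mu}{\mu-1}\beta$.

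There is essentially no obstacle here: the corollary is a one-line consequence of Lemma \ref{l8.3}(iii), the cocycle identity for $\varphi$, and the uniform upper bound on $K_n$. The only point worth making explicit is that positivity of the trajectory is preserved so that the absolute value in the statement is superfluous.
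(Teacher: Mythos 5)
Your proof is correct and is exactly the argument the paper intends: Corollary \ref{cor8.1} is stated right after Lemma \ref{l8.3} without a separate proof, and your derivation via $\varphi(n+1,u,f)=f(n,\varphi(n,u,f))<\frac{\mu}{\mu-1}K_n\le\frac{\mu}{\mu-1}\beta$ from part (iii) of that lemma, together with the bound $K_n\le\beta$ from (C2) and the inductively preserved nonnegativity of the trajectory, is the intended one-line deduction. No gaps; the remark that the absolute value is superfluous because positivity is preserved is a sensible explicit addition.
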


\begin{theorem}\label{thBH1} Assume that the following conditions
are fulfilled:
\begin{enumerate}
\item the function $K\in C(\mathbb Z_{+},\mathbb R)$ is remotely
stationary (respectively, remotely $\tau$-periodic or remotely
almost periodic); \item there are $0<\alpha <\beta$ such that
$\alpha \le K(t)\le \beta$ for every $t\in \mathbb Z_{+}$; \item
$\mu \beta^{2}\alpha^{-2}\le 1$.
\end{enumerate}

Then every solution $\varphi(t,u,f)$ of the Beverton-Holt equation
(\ref{eqBH1}) is remotely stationary (respectively, remotely
$\tau$-periodic or remotely almost periodic).
\end{theorem}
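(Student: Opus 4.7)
The plan is to deduce Theorem \ref{thBH1} from the general Theorem \ref{thDEM3.1} applied to the nonautonomous scalar difference equation (\ref{eqSDDE1}) with right--hand side
$$
f(n,x):=\frac{\mu K_n x}{K_n+(\mu-1)x},
$$
so the task reduces to verifying the five hypotheses of Theorem \ref{thDEM3.1} for this particular $f$. Three of them are already in hand from the lemmas of the subsection: monotonicity of $f$ in $x$ follows from Lemma \ref{l8.3}(ii) together with $f'_x(n,x)>0$ (and hence from Lemma \ref{l8.02} the generated cocycle is order--preserving); the Lipschitz-1 estimate
$$
|f(n,x_1)-f(n,x_2)|\le|x_1-x_2|
$$
is Lemma \ref{lBH1}, so by Lemma \ref{l8.03} the cocycle $\varphi$ is positively uniformly stable; and the existence of a bounded solution $\varphi(t,u,f)$ on $\mathbb{Z}_{+}$ is supplied by Corollary \ref{cor8.1}.

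The remaining hypotheses concern the dynamical behaviour of $f$ itself in the shift system $(C(\mathbb Z_{+}\times\mathbb R,\mathbb R),\mathbb Z_{+},\sigma)$: positive Lagrange stability, remote almost periodicity in $t$ uniformly on compacts, and minimality of $\omega_f$. The key observation I would use is that $f$ is of the form $f(n,x)=G(K_n,x)$, where $G(k,x)=\mu k x/(k+(\mu-1)x)$ is continuous on $[\alpha,\beta]\times\mathbb R$ and, because $K$ ranges in the compact set $[\alpha,\beta]$, is uniformly continuous on $[\alpha,\beta]\times Q$ for any compact $Q\subset\mathbb R$. Thus the map $\Phi\colon C_b(\mathbb Z_{+},[\alpha,\beta])\to C(\mathbb Z_{+}\times\mathbb R,\mathbb R)$ sending $K\mapsto f$ intertwines the shifts and is continuous from the compact--open topology on the bounded sequences into the compact--open topology on $C(\mathbb Z_{+}\times\mathbb R,\mathbb R)$.

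From this I would conclude the three outstanding properties directly: positive Lagrange stability of $K$ (which, together with the uniform bound by $\beta$, follows from $K$ being remotely almost periodic) carries over to $f$ because $\Phi$ is continuous and shift--equivariant; remote almost periodicity of $K$ carries over to $f$ uniformly w.r.t. $x$ on compacts by the same uniform continuity argument applied to the defining inequality
$$
|f(t+\tau,x)-f(t,x)|=|G(K_{t+\tau},x)-G(K_t,x)|
$$
uniformly in $x\in Q$; and minimality of $\omega_K$ transfers to $\omega_f$ since $\Phi(\omega_K)=\omega_f$ and the continuous equivariant image of a minimal set is minimal.

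With all hypotheses of Theorem \ref{thDEM3.1} established, its conclusion yields that $\varphi(t,u,f)$ is remotely almost periodic (respectively, remotely $\tau$-periodic or remotely stationary), which is exactly the claim. The main obstacle is the third paragraph: one must justify that the dynamical attributes of the scalar sequence $K$ (Lagrange stability, remote almost periodicity, and minimality of its $\omega$-limit set) do indeed propagate to the two--variable function $f$ in the appropriate function--space topology. The restriction to the compact strip $[\alpha,\beta]$ makes this a uniform-continuity exercise, but it is the only nontrivial verification; everything else is bookkeeping and application of the lemmas already established in the subsection.
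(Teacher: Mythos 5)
Your route is the same as the paper's: check the hypotheses of Theorem \ref{thDEM3.1} for $f(n,x)=\mu K_nx/(K_n+(\mu-1)x)$ via Lemma \ref{l8.02}/\ref{l8.03}, Lemma \ref{lBH1} and Corollary \ref{cor8.1}, and then apply that theorem; the paper's proof is exactly this one-line reduction. The additional material in your second and third paragraphs (writing $f(n,x)=G(K_n,x)$ and transferring the dynamical properties of $K$ to $f$ through the continuous, shift-equivariant map $\Phi$) is precisely the verification the paper leaves implicit, and those transfer steps are sound: in discrete time positive Lagrange stability of $K$ already follows from $\alpha\le K\le\beta$ alone, uniform continuity of $G$ on $[\alpha,\beta]\times Q$ gives remote almost periodicity of $f$ uniformly on compacts, $\Phi(\omega_K)=\omega_f$ by compactness, and continuous equivariant images of compact minimal sets are minimal.

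The genuine gap is the premise you feed into that last transfer: minimality of $\omega_K$. It is not among the hypotheses of Theorem \ref{thBH1}, and it does not follow from remote almost periodicity (or even remote stationarity) of $K$. For instance $K_n=\frac{\alpha+\beta}{2}+\frac{\beta-\alpha}{4}\sin\ln(1+n)$ satisfies conditions (i)--(ii) and is remotely stationary (the discrete analogue of Example \ref{exI1}), yet its $\omega$-limit set in the shift system is a whole interval of constant sequences, each of which is a fixed point, so $\omega_K$ is not minimal; consequently $\omega_f=\Phi(\omega_K)$ need not be minimal and the third hypothesis of Theorem \ref{thDEM3.1} is simply unavailable. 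To be fair, the paper's own proof makes the same silent leap (it cites only Lemma \ref{lBH1} and Corollary \ref{cor8.1} before invoking Theorem \ref{thDEM3.1}), so this lacuna is inherited rather than introduced by you; but as written, the sentence ``minimality of $\omega_K$ transfers to $\omega_f$'' rests on an unestablished, and in general false, premise, and closing it requires either an extra hypothesis on $K$ or an argument for Theorem \ref{thDEM3.1}-type conclusions that does not use minimality of $\omega_f$.
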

\begin{proof}
To prove this statement, taking into account Lemma \ref{lBH1} and
Corollary \ref{cor8.1}, it suffices to apply Theorem
\ref{thDEM3.1} to Beverton-Holt equation (\ref{eqBH1}).
\end{proof}
}
\end{example}

\section{Funding}

This research was supported by the State Programs of the Republic
of Moldova "Monotone Nonautonomous Dynamical Systems
(24.80012.5007.20SE)", "Remotely Almost Periodic Solutions of
Differential Equations (25.80012.5007.77SE)" and partially was
supported by the Institutional Research Program 011303 "SATGED",
Moldova State University.

\section{Data availability}

No data was used for the research described in the article.

\section{Conflict of Interest}

The author declares that he does not have conflict of interest.

\medskip
\textbf{ORCID:} https://orcid.org/0000-0002-2309-3823

\medskip

\end{document}